\providecommand{\U}[1]{\protect\rule{.1in}{.1in}}
\providecommand{\U}[1]{\protect\rule{.1in}{.1in}}
\providecommand{\U}[1]{\protect\rule{.1in}{.1in}}
\providecommand{\U}[1]{\protect\rule{.1in}{.1in}}
\providecommand{\U}[1]{\protect\rule{.1in}{.1in}}
\newcommand{\uemptyset }{{\boldsymbol{\emptyset}}}
\newtheorem{Th}{Theorem}[section]
\newtheorem{lemma}[Th]{Lemma}
\newtheorem{Cor}[Th]{Corollary}
\newtheorem{Prop}[Th]{Proposition}
\newtheorem{Def-Prop}[Th]{Definition-Proposition}
\theoremstyle{definition}
\newtheorem{Def}[Th]{Definition}
\newtheorem{Exa}[Th]{Example}
\theoremstyle{remark}
\newtheorem{Rem}[Th]{Remark}
\begin{document}

\title{A combinatorial decomposition of higher level Fock spaces}
\author{{N. Jacon }\thanks{Universit\'e de Franche-Comt\'e, email :
njacon@univ-fcomte.fr} and {C. Lecouvey}\thanks{Universit\'e de Tours, email :
cedric.lecouvey@lmpt.univ-tours.fr }}
\maketitle
\date{}

\begin{abstract}
We give a simple characterization of the highest weight vertices in the
crystal graph of the level $l$ Fock spaces. This characterization is based on
the notion of totally periodic symbols viewed as affine analogues of reverse
lattice words classically used in the decomposition of tensor products of
fundamental $\mathfrak{sl}_{n}$-modules. This yields a combinatorial
decomposition of the  Fock spaces in their irreducible components and the
branching law for the restriction of the irreducible highest weight
$\mathfrak{sl}_{\infty}$-modules to $\widehat{\mathfrak{sl}_{e}}$.

\end{abstract}

\section{Introduction}

To any $l$-tuple $\mathbf{s\in}\mathbb{Z}^{l}$ is associated a Fock space
$\mathcal{F}_{\mathbf{s}}$ which is a $\mathbb{C}(q)$-vector space with basis
the set of $l$-partitions (i.e. the set of $l$-tuples of partitions).\ This
level $l$ Fock space was introduced in \cite{jim} in order to construct the
irreducible highest weight representations of the quantum groups
${\mathcal{U}}_{q}^{\prime}{(\widehat{\mathfrak{sl}_{e}})}$ and ${\mathcal{U}%
_{q}(\mathfrak{sl}_{\infty})}$. It provides a natural frame for the
simultaneous study of the representation theories of ${\mathcal{U}}%
_{q}^{\prime}{(\widehat{\mathfrak{sl}_{e}})}$ and ${\mathcal{U}_{q}%
(\mathfrak{sl}_{\infty})}$. It moreover permits to categorify the
representation theory of the Ariki-Koike algebras (some generalizations of the
Hecke algebras of the symmetric groups) in the nonsemisimple case (see
\cite{arikilivre}).

The Fock space $\mathcal{F}_{\mathbf{s}}$ has two structures of ${\mathcal{U}%
}_{q}^{\prime}{(\widehat{\mathfrak{sl}_{e}})}$ and ${\mathcal{U}%
_{q}(\mathfrak{sl}_{\infty})}$-modules.\ For these two structures, the empty
$l$-partition $\uemptyset$ is a highest weight vector with dominant weights
$\Lambda_{\mathbf{s},e}$ and $\Lambda_{\mathbf{s},\infty}$. We denote by
$V_{e}({\mathbf{s}})$ and $V_{\infty}({\mathbf{s}})$ the corresponding highest
weight ${\mathcal{U}}_{q}^{\prime}{(\widehat{\mathfrak{sl}_{e}})}$ and
${\mathcal{U}_{q}(\mathfrak{sl}_{\infty})}$-modules. In fact, any highest
weight irreducible ${\mathcal{U}}_{q}^{\prime}{(\widehat{\mathfrak{sl}_{e}})}$
or ${\mathcal{U}_{q}(\mathfrak{sl}_{\infty})}$-module can be realized in this
way as the irreducible component of a Fock space with highest weight vector
$\uemptyset$.\ It is also known \cite{AJL,jim} that the two modules structures
are compatible. This means that the action of any Chevalley generator for
${\mathcal{U}}_{q}^{\prime}{(\widehat{\mathfrak{sl}_{e}})}$ can be obtained
from the actions of the Chevalley generators for ${\mathcal{U}_{q}%
(\mathfrak{sl}_{\infty})}$. In particular, $V_{\infty}({\mathbf{s}})$ admits
the structure of a ${\mathcal{U}}_{q}^{\prime}{(\widehat{\mathfrak{sl}_{e}})}$-module.

The first purpose of this paper is to give a simple combinatorial description
of the decomposition of $\mathcal{F}_{\mathbf{s}}$ in its irreducible
${\mathcal{U}}_{q}^{\prime}{(\widehat{\mathfrak{sl}_{e}})}$ and ${\mathcal{U}%
_{q}(\mathfrak{sl}_{\infty})}$-components. For the ${\mathcal{U}%
_{q}(\mathfrak{sl}_{\infty})}$-module structure, this problem is very similar
to the decomposition of a tensor product of fundamental ${\mathcal{U}%
_{q}(\mathfrak{sl}_{n})}$-modules into irreducible ones. It is well-known that
this decomposition can be obtained by using the notion of reverse lattice (or
Yamanouchi) words.\ Here our description of the decomposition into irreducible
is based on the notion of totally periodic symbols which can be regarded as
affine analogues of reverse lattice words.

\noindent The Kashiwara crystal associated to the Fock space $\mathcal{F}%
_{\mathbf{s}}$ admits as set of vertices, the set $\mathcal{G}_{\mathbf{s}}$
of all $l$-partitions. According to Kashiwara crystal basis theory, it
suffices to characterize the highest weight vertices in $\mathcal{G}%
_{\mathbf{s}}$ to obtain the decomposition of $\mathcal{F}_{\mathbf{s}}$ into
its irreducible components. We prove in fact that the totally periodic symbols
label the highest weight vertices of $\mathcal{G}_{\mathbf{s}}$. It is also
worth mentioning that, according to recent papers by Gordon-Losev and
Shan-Vasserot \cite{GL,SV}, there should exist a natural labelling of the
finite dimensional irreducible representations of the rational Cherednik
algebras by highest weight vertices of $\mathcal{G}_{\mathbf{s}}$, thus by a
subset of the set of totally periodic symbols.\ Nevertheless the combinatorial
characterization of this subset seems not immediate.

The set $\mathcal{G}_{\mathbf{s}}$ admits two ${\mathcal{U}}_{q}^{\prime
}{(\widehat{\mathfrak{sl}_{e}})}$ and ${\mathcal{U}_{q}(\mathfrak{sl}_{\infty
})}$-crystal structures. In \cite{JL} we established that the ${\mathcal{U}%
}_{q}^{\prime}{(\widehat{\mathfrak{sl}_{e}})}$-structure of graph on
$\mathcal{G}_{\mathbf{s}}$ is in fact a subgraph of the ${\mathcal{U}%
_{q}(\mathfrak{sl}_{\infty})}$-structure.\ This implies that each
$\ {\mathcal{U}_{q}(\mathfrak{sl}_{\infty})}$-connected component decomposes
into ${\mathcal{U}}_{q}^{\prime}{(\widehat{\mathfrak{sl}_{e}})}$-connected
components.\ Each $l$-partition then admits a ${\mathcal{U}}_{q}^{\prime
}{(\widehat{\mathfrak{sl}_{e}})}$ and a ${\mathcal{U}_{q}(\mathfrak{sl}%
_{\infty})}$-weight. In particular, We can consider the decomposition of the
${\mathcal{U}_{q}(\mathfrak{sl}_{\infty})}$-connected component $\mathcal{G}%
_{\mathbf{s},\infty}(\uemptyset)$ with highest weight vertex $\uemptyset$ in
its ${\mathcal{U}}_{q}^{\prime}{(\widehat{\mathfrak{sl}_{e}})}$-connected
components, \ that is the decomposition of the crystal graph of $V_{\infty
}({\mathbf{s}})$ in ${\mathcal{U}}_{q}^{\prime}{(\widehat{\mathfrak{sl}_{e}}%
)}$-crystals.\ We prove that this decomposition gives the branching law for
the restriction of the ${\mathcal{U}_{q}(\mathfrak{sl}_{\infty})}$-module
$V_{\infty}({\mathbf{s}})$ to ${\mathcal{U}}_{q}^{\prime}{(\widehat
{\mathfrak{sl}_{e}})}$. Observe this does not follow immediately from crystal
basis theory since the root system of affine type $A_{e-1}^{(1)}$ is not
parabolic in the root system of type $A_{\infty}$. We also establish that the
number of highest weight ${\mathcal{U}}_{q}^{\prime}{(\widehat{\mathfrak{sl}%
_{e}})}$-vertices in $\mathcal{G}_{\mathbf{s},\infty}(\uemptyset)$ with fixed
${\mathcal{U}_{q}(\mathfrak{sl}_{\infty})}$-weight is counted by some
particular (skew semistandard) tableaux we call totally periodic. These
tableaux can be regarded as affine analogues of the usual semistandard skew
tableaux relevant for computing the branching coefficients associated to the
restriction of the irreducible ${\mathfrak{gl}}_{n}$-modules to
${\mathfrak{gl}}_{m}\oplus{\mathfrak{gl}}_{n-m}$ with $m<n$ some positive integers.

\noindent It also follows that the number of ${\mathcal{U}}_{q}^{\prime
}{(\widehat{\mathfrak{sl}_{e}})}$-highest weight vertices in $\mathcal{G}%
_{\mathbf{s}}$ with fixed ${\mathcal{U}_{q}(\mathfrak{sl}_{\infty})}$-weight
is finite and can be expressed in terms of the Kostka numbers and the number
of totally periodic tableaux of fixed shape and weight.

\bigskip

The paper is organized as follows. In Section 2, we introduce the notion of
symbol of an $l$-partition. Section 3 is devoted to some background on
$\mathcal{F}_{\mathbf{s}}$, its two module structures and the corresponding
crystal bases theory. In Section 4, we show that the two crystal bases on
$\mathcal{F}_{\mathbf{s}}$ for ${\mathcal{U}}_{q}^{\prime}{(\widehat
{\mathfrak{sl}_{e}})}$ and for ${\mathcal{U}_{q}(\mathfrak{sl}_{\infty})}$ are
compatible. This implies that the decomposition of $\mathcal{G}_{\mathbf{s}%
,e}(\uemptyset)$ into its ${\mathcal{U}}_{q}^{\prime}{(\widehat{\mathfrak{sl}%
_{e}})}$-connected components yields the desired branching law. Section 5
characterizes the highest weight vertices in $\mathcal{G}_{\mathbf{s}}$ by
totally periodic symbols.\ Finally in Section 6, we first express the
multiplicities of the irreducible ${\mathcal{U}_{q}(\mathfrak{sl}_{\infty})}%
$-modules appearing in the decomposition of $\mathcal{F}_{\mathbf{s}}$ in
terms of the Kostka numbers. Next, we establish that the branching
coefficients for the restriction of $V_{\infty}({\mathbf{s}})$ to
${\mathcal{U}}_{q}^{\prime}{(\widehat{\mathfrak{sl}_{e}})}$ can be graded by
the ${\mathcal{U}_{q}(\mathfrak{sl}_{\infty})}$-weights and then counted by
totally periodic semistandard tableaux. This gives the decomposition of
$\mathcal{F}_{\mathbf{s}}$ in its irreducible ${\mathcal{U}}_{q}^{\prime
}{(\widehat{\mathfrak{sl}_{e}})}$-components.

\section{Preliminaries on multipartitions and their symbols}

\subsection{Nodes in multipartitions.}

\label{def1} Let $n\in\mathbb{N}$, $l\in\mathbb{Z}^{l}$ and $\mathbf{s}%
=(s_{0},s_{1},\ldots,s_{l-1})\in\mathbb{Z}^{l}$.

\noindent A \textit{partition} $\lambda$ is a sequence $(\lambda_{1}%
,\ldots,\lambda_{r})$ of decreasing non negative integers.

\noindent An \textit{$l$-partition} (or multipartition) ${\boldsymbol{\lambda
}}$ is an $l$-tuple of partitions $(\lambda^{0},\ldots,\lambda^{l-1}) $. We
write ${\boldsymbol{\lambda}}\vdash_{l}n$ when ${\boldsymbol{\lambda}}$ is an
$l$-partition of total rank $n$. The empty $l$-partition (which is the
$l$-tuple of empty partitions) is denoted by $\uemptyset$.

\noindent If ${\boldsymbol{\lambda}}$ is not the empty multipartition, the
\textit{height} of ${\boldsymbol{\lambda}}$ is by definition the minimal non
negative integer $i$ such that there exists $c\in\{0,\ldots,l-1\}$ satisfying
$\lambda_{i}^{c}\neq0$. By convention, the height of $\uemptyset$ is $0$.

\noindent For all ${\boldsymbol{\lambda}}\vdash_{l}n$, we consider its
\textit{Young diagram}:
\[
\lbrack{\boldsymbol{\lambda}}]=\{(a,b,c)\ a\geq1,\ c\in\{0,\ldots,l-1\},1\leq
b\leq\lambda_{a}^{c}\}.
\]
The \textit{nodes} of ${\boldsymbol{\lambda}}$ are usually defined as the
elements of $[{\boldsymbol{\lambda}}]$. However, by slightly abuse the
notation, they will be regarded in the sequel as the elements of the
(infinite) set:
\[
\{(a,b,c)\ a\geq1,\ c\in\{0,\ldots,l-1\},0\leq b\leq\lambda_{a}^{c}\}.
\]
We define the \textit{content} of a node $\gamma=(a,b,c)\in\lbrack
{\boldsymbol{\lambda}}]$ as follows:
\[
\text{\textrm{cont}}(\gamma)=b-a+s_{c},
\]
and the \textit{residue} $\mathrm{res}(\gamma)$ is by definition the content
of $\gamma$ taken modulo $e$. An \textit{$i$-node} is then a node with residue
$i\in\mathbb{Z}/e\mathbb{Z}$. The nodes of \textit{the right rim} of
${\boldsymbol{\lambda}}$ are the nodes $(a,\lambda_{a}^{c},c)$ with
$\lambda_{a}^{c}\neq0$.
We will say that $\gamma$ is an $i$-node of ${\boldsymbol{\lambda}}$ when
$\mathrm{res}(\gamma)\equiv i(\text{mod }e).$ Finally, We say that $\gamma$ is
\textit{removable} when $\gamma=(a,b,c)\in{\boldsymbol{\lambda}}$ and
${\boldsymbol{\lambda}}\backslash\{\gamma\}$ is an $l$-partition. Similarly
$\gamma$ is \textit{addable} when $\gamma=(a,b,c)\notin{\boldsymbol{\lambda}}$
and ${\boldsymbol{\lambda}}\cup\{\gamma\}$ is an $l$-partition.

\subsection{Symbol of a multipartition}

\label{def2} Let ${\boldsymbol{\lambda}}\vdash_{l}n$. Then one can associate
to ${\boldsymbol{\lambda}}$ its \textit{shifted $\mathbf{s}$-symbol} denoted
by $\mathfrak{B}({\boldsymbol{\lambda}},\mathbf{s})$. Our notation slightly
differs from the one used in \cite[\S 5.5.5]{JG} because the symbols we use
here are semi-infinite with possible negative values. Thus, the symbol
$\mathfrak{B}({\boldsymbol{\lambda}},\mathbf{s})$ is the $l$-tuple
\[
(\mathfrak{B}({\boldsymbol{\lambda}},\mathbf{s})^{0},\mathfrak{B}%
({\boldsymbol{\lambda}},\mathbf{s})^{1},\ldots,\mathfrak{B}%
({\boldsymbol{\lambda}},\mathbf{s})^{l-1})
\]
where for each $c\in\{0,1,\ldots,l-1\}$, and $i=1,2,\ldots$, we have
\[
\mathfrak{B}({\boldsymbol{\lambda}},\mathbf{s})_{i}^{c}=\lambda_{i}%
^{c}-i+s_{c}+1
\]
This symbol is usually represented as an $l$-row tableau whose $c$-th row
(counted from bottom) is $\mathfrak{B}({\boldsymbol{\lambda}},\mathbf{s})^{c}$.

\begin{Exa}
With ${\boldsymbol{\lambda}}=(3,2.2.2,2.1)$ and $\mathbf{s}=(1,0,2)$, we
obtain
\[
\mathfrak{B}({\boldsymbol{\lambda}},\mathbf{s})=\left(
\begin{array}
[c]{rrrrrrr}%
\ldots & -3 & -2 & -1 & 0 & 2 & 4\\
\ldots & -3 & 0 & 1 & 2 &  & \\
\ldots & -3 & -2 & -1 & 0 & 4 &
\end{array}
\right)
\]

\end{Exa}

We make the following observations.

\begin{itemize}
\item It is easy to recover the multipartition ${\boldsymbol{\lambda}}$ and
the multicharge $\mathbf{s}$ from the datum of $\mathfrak{B}%
({\boldsymbol{\lambda}},\mathbf{s})$.

\item For all $c\in\{0,\ldots,l-1\},$ let $j_{c}$ be the maximal integer such
that $\mathfrak{B}({\boldsymbol{\lambda}},\mathbf{s})_{j_{c}}^{c}\neq-{j_{c}%
}+s_{c}+1$, if it exists, we set $j_{c}:=0$ otherwise. Then the entries
$\mathfrak{B}({\boldsymbol{\lambda}},\mathbf{s})_{j}^{c}$ of the symbol such
that $0\leq c\leq l-1$ and $j\leq j_{c}$ are bijectively associated with the
nodes $(j,\lambda_{j}^{c},c)$ of the right rim of ${\boldsymbol{\lambda}}$.
\end{itemize}

\subsection{Period in a symbol}

We now introduce the notion of period in a symbol which is crucial for the sequel.

\begin{Def}
\label{def3}Consider a pair $({\boldsymbol{\lambda}},\mathbf{s})$ and its
symbol $\mathfrak{B}({\boldsymbol{\lambda}},\mathbf{s})$. We say that
$({\boldsymbol{\lambda}},\mathbf{s})$ is \textit{$e$-periodic} if there exists
a sequence $(i_{1},c_{1}),(i_{2},c_{2}),\ldots,(i_{e},c_{e})$ in
$\mathbb{N}\times\{0,1,\ldots,l-1\}$ and $k\in\mathbb{Z}$ satisfying
\[
\mathfrak{B}({\boldsymbol{\lambda}},\mathbf{s})_{i_{1}}^{c_{1}}%
=k,\ \mathfrak{B}({\boldsymbol{\lambda}},\mathbf{s})_{i_{2}}^{c_{2}%
}=k-1,\ldots,\mathfrak{B}({\boldsymbol{\lambda}},\mathbf{s})_{i_{e}}^{c_{e}%
}=k-e+1
\]
and such that

\begin{enumerate}
\item $c_{1}\geq c_{2}\geq\ldots\geq c_{e}$,


\item for all $0\leq c\leq l-1$ and $i\in\mathbb{N}$, we have $\mathfrak{B}%
({\boldsymbol{\lambda}},\mathbf{s})_{i}^{c}\leq k$ (i.e. $k$ is the largest
entry of $\mathfrak{B}({\boldsymbol{\lambda}},\mathbf{s})$).

\item given $t\in\{1,\ldots,e\}$ and $(j,d)$ such that $\mathfrak{B}%
({\boldsymbol{\lambda}},\mathbf{s})_{j}^{d}=k-t+1$, we have $c_{t}\leq
d$.(i.e. there is no entry $k-t+1$ in $\mathfrak{B}({\boldsymbol{\lambda}%
},\mathbf{s})$ strictly below than the one corresponding to $(i_{t},c_{t})$)
%
.
\end{enumerate}

The \textit{$e$-period} of $\mathfrak{B}({\boldsymbol{\lambda}},\mathbf{s})$
is the sequence $(i_{1},\lambda_{i_{1}}^{c_{1}},c_{1}),(i_{2},\lambda_{i_{2}%
}^{c_{2}},c_{2}),\ldots,(i_{e},\lambda_{i_{e}}^{c_{e}},c_{e})$ and the
\textit{form} of the $e$-period is the associated sequence $(k,k-1,\ldots
,k-e+1)$ which can be read in the symbol.
\end{Def}

\subsection{Reading of a symbol}

\label{word} An $e$-period can be easily read on the symbol $\mathfrak{B}%
({\boldsymbol{\lambda}},\mathbf{s})$ associated with $({\boldsymbol{\lambda}%
},\mathbf{s})$ as follows. First, consider the truncated symbol $\mathfrak{B}%
^{t}({\boldsymbol{\lambda}},\mathbf{s})$. It is obtained by keeping only in
$\mathfrak{B}({\boldsymbol{\lambda}},\mathbf{s})$ the entries of the symbol of
the form $\mathfrak{B}({\boldsymbol{\lambda}},\mathbf{s})_{j}^{c}$ for
$j=1,\ldots,h_{c}+e$ (where $h_{c}$ denotes the height of $\lambda^{c}$) and
$c=0,1,\ldots,l-1$.

Denote by $w$ the word with letters in $\mathbb{Z}$ obtained by reading the
entries in the rows of $\mathfrak{B}^{t}({\boldsymbol{\lambda}},\mathbf{s})$
from right to left, next from top to bottom. We say that $w$ is the reading of
$\mathfrak{B}^{t}({\boldsymbol{\lambda}},\mathbf{s})$.\ Each letter of $w$
encodes a node in $({\boldsymbol{\lambda}},\mathbf{s})$ (possibly associated
with a part $0$).

When it exists, the $e$-period of $\mathfrak{B}({\boldsymbol{\lambda}%
},\mathbf{s})$ is the sequence of nodes corresponding to the subword $u$ of
$w$ of the form $u=k(k-1)\cdots(k-e+1)$ where $k$ is the largest integer
appearing in $w$ (and thus also in the symbol) and each letter $k-a,a=0,\ldots
,e-1$ in $t$ is the rightmost letter $k-a$ in $w$.

\begin{Exa}
For $\mathbf{s}=(0,-1,1)$ and ${\boldsymbol{\lambda}}=(3,2.2.2,2.1)$, the
symbol
\[
\mathfrak{B}({\boldsymbol{\lambda}},\mathbf{s})=\left(
\begin{array}
[c]{rrrrrrr}%
\ldots & -4 & -3 & -2 & -1 & 1 & 3\\
\ldots & -4 & -1 & 0 & 1 &  & \\
\ldots & -4 & -3 & -2 & -1 & 3 &
\end{array}
\right)
\]
admits no $4$-period. So $({\boldsymbol{\lambda}},\mathbf{s})$ is not $4$-periodic.

For $\mathbf{s}^{\prime}=(-1,-1,1)$ and ${\boldsymbol{\nu}}%
=(3.3.1,4.3.1,4.4.2)$, we have:
\[
\mathfrak{B}({\boldsymbol{\nu}},\mathbf{s}^{\prime})=\left(
\begin{array}
[c]{rrrrrrrr}%
\ldots & -5 & -4 & -3 & -2 & 1 & \mathbf{4} & \mathbf{5}\\
\ldots & -5 & -4 & -2 & 1 & \mathbf{3} &  & \\
\ldots & -5 & -4 & -2 & \mathbf{1} & \mathbf{2} &  &
\end{array}
\right)
\]
Thus ${\boldsymbol{\lambda}}$ admits a $5$-period with form $(5,4,3,2,1)$. The
word associated $w$ described in \S \ref{word} is:
\[
w=\mathbf{54}1\bar{2}\bar{3}\bar{4}\bar{5}\bar{6}\;\mathbf{3}1\bar{2}\bar
{3}\bar{4}\bar{5}\bar{6}\bar{7}\;\mathbf{21}\bar{2}\bar{4}\bar{5}\bar{6}%
\bar{7}\bar{8}%
\]
where we write $\bar{x}$ for $-x$ for any $x\in\mathbb{Z}_{>0}$.\ So
$\mathfrak{B}({\boldsymbol{\nu}},\mathbf{s}^{\prime})$ is $5$-periodic.
\end{Exa}

\begin{Rem}
\label{Rem_empt_e_period}A pair $(\uemptyset,\mathbf{s})$ is always
$e$-periodic with form of the $e$-period $M,M-1,\ldots,M-e+1$ where
$M=\mathrm{max}(\mathbf{s)}$.
\end{Rem}

\subsection{Removing periods in $\mathfrak{B}(\uemptyset,\mathbf{s})$}

For $l\in\mathbb{N}$ and $e\in\mathbb{N}$, we denote
\[
\mathcal{T}_{l,e}=\{\mathbf{t}=(t_{0},\ldots,t_{l-1})\in\mathbb{Z}^{l}\mid
t_{0}\leq\cdots\leq t_{l-1}\text{ and }t_{l-1}-t_{0}\leq e-1\}.\label{def_Tl}%
\]

We now describe an elementary procedure which permits to associate to any
$l$-tuple $\mathbf{s\in}\mathbb{Z}^{l}$ an element $\mathbf{t\in}%
\mathcal{T}_{l,e}$ such that $\mathfrak{B}(\uemptyset,\mathbf{t})$ is obtained
from $\mathfrak{B}(\uemptyset,\mathbf{s})$ by deleting $e$-periods.

If $\mathbf{s}=\mathbf{s}^{(0)}\mathbf{\notin}\mathcal{T}_{l,e}$, we set
$\mathbf{s}^{(1)}=\mathbf{s}^{\prime}$ where $\mathfrak{B}(\uemptyset
,\mathbf{s}^{\prime})$ is obtained from $\mathfrak{B}(\uemptyset,\mathbf{s})$
by deleting its $e$-period. More generally we define $\mathbf{s}^{(p+1)}$ from
$\mathbf{s}^{(p)}\mathbf{\notin}\mathcal{T}_{l,e}$ such that $\mathbf{s}%
^{(p+1)}=(\mathbf{s}^{(p)})^{\prime}.$

\begin{lemma}
\label{Lem_s(p)}For any $\mathbf{s\in}\mathbb{Z}^{l}$, there exists $p\geq0$
such that $\mathbf{s}^{(p)}\in\mathcal{T}_{l,e}$.
\end{lemma}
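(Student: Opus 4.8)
The plan is to show that the operation $\mathbf{s}^{(p)} \mapsto \mathbf{s}^{(p+1)}$ strictly decreases a suitable nonnegative integer-valued statistic on $\mathbb{Z}^l$, so that the process must terminate; and termination can only occur when $\mathbf{s}^{(p)} \in \mathcal{T}_{l,e}$, since the construction of $\mathbf{s}'$ is defined precisely for $\mathbf{s} \notin \mathcal{T}_{l,e}$. First I would make explicit what $\mathbf{s} \mapsto \mathbf{s}'$ does on the level of the multicharge. By Remark \ref{Rem_empt_e_period}, $\mathfrak{B}(\uemptyset,\mathbf{s})$ is always $e$-periodic with form $M, M-1, \ldots, M-e+1$ where $M = \max(\mathbf{s})$; deleting this $e$-period removes, reading from the rows in the order prescribed by condition (1) and (3) of Definition \ref{def3}, exactly one box from the top of each of $e$ columns, which at the level of the charges amounts to decreasing $e$ of the coordinates $s_c$ by $1$ (with multiplicity, if several periods pass through the same row). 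Concretely, if $m$ denotes the number of indices $c$ with $s_c = M$, then the $e$-period decrements those $m$ largest coordinates and then continues into coordinates equal to $M-1$, and so on; in all cases $\sum_c s_c$ drops by exactly $e$, and $\max(\mathbf{s}') \le \max(\mathbf{s})$.

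Next I would exhibit the monovariant. A natural choice is $N(\mathbf{s}) = \sum_{c=0}^{l-1} \big(\max(\mathbf{s}) - s_c\big) \cdot 0 + \ldots$ — more cleanly, set $N(\mathbf{s}) = \sum_{c} f(s_c)$ for a convex enough weight, but the cleanest is to argue directly: let $M = \max(\mathbf{s})$ and consider the quantity
\[
N(\mathbf{s}) = \#\{\, c \in \{0,\ldots,l-1\} : s_c \le M - e \,\} \cdot (\text{large}) + \sum_{c : s_c > M-e}(M - s_c),
\]
or, avoiding artificial constants, simply track the pair $\big(\#\{c : s_c \le M-e\},\ \text{something}\big)$ lexicographically. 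In fact the right invariant is transparent once one sees that $\mathbf{s} \in \mathcal{T}_{l,e}$ iff $\max(\mathbf{s}) - \min(\mathbf{s}) \le e-1$ (the ordering condition $t_0 \le \cdots \le t_{l-1}$ is immaterial here since the symbol rows can be sorted, or one works up to the obvious symmetry). So I would prove: if $\mathbf{s} \notin \mathcal{T}_{l,e}$, then either $\max(\mathbf{s}') < \max(\mathbf{s})$, or $\max(\mathbf{s}') = \max(\mathbf{s})$ and the number of coordinates attaining the maximum strictly decreases. Iterating, $\max(\mathbf{s}^{(p)})$ is weakly decreasing and eventually strictly decreasing whenever we are not yet in $\mathcal{T}_{l,e}$; but $\max(\mathbf{s}^{(p)})$ is bounded below as long as the process runs, because — and this is the point requiring a short argument — the sum $\sum_c s_c^{(p)}$ decreases by exactly $e$ at each step while $l \cdot \max(\mathbf{s}^{(p)})$ stays an upper bound, so the spread $\max - \min$ cannot grow without bound; once it is forced to be $\le e-1$ we have landed in $\mathcal{T}_{l,e}$.

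I would organize the write-up as follows: (i) describe precisely, using the reading of the symbol from \S\ref{word}, which coordinates of $\mathbf{s}$ are decremented when the $e$-period of $\mathfrak{B}(\uemptyset,\mathbf{s})$ is deleted, and check $\sum_c s_c' = \sum_c s_c - e$ and $\max(\mathbf{s}') \le \max(\mathbf{s})$; (ii) observe $\mathbf{s} \in \mathcal{T}_{l,e} \iff \max(\mathbf{s}) - \min(\mathbf{s}) \le e-1$ (up to reordering), so the procedure is applied exactly while the spread is $\ge e$; (iii) show that while the spread is $\ge e$, the quantity $e \cdot \max(\mathbf{s}^{(p)}) - \sum_c s_c^{(p)} = \sum_c (\max(\mathbf{s}^{(p)}) - s_c^{(p)}) \ge$ (spread) $\ge e > 0$ and strictly decreases at each step — because $\sum_c s_c^{(p)}$ drops by $e$ while $\max$ is weakly decreasing, so $e\cdot\max$ drops by a multiple of $e$, hence the whole expression drops by a positive amount, being a nonnegative integer; (iv) conclude it cannot decrease forever, so after finitely many steps the spread is $< e$ and $\mathbf{s}^{(p)} \in \mathcal{T}_{l,e}$. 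The main obstacle is step (i): one must carefully verify, from conditions (1)–(3) of Definition \ref{def3} and the right-to-left/top-to-bottom reading convention, that deleting the $e$-period of the empty-multipartition symbol really does correspond to decrementing $e$ of the $s_c$'s by $1$ each (counted with multiplicity across rows), and in particular that it decrements the coordinates in a way that never increases the maximum — everything else is then a short, purely arithmetic monovariant argument.
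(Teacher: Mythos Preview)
Your monovariant in step~(iii) does not work. First, the displayed equality $e\cdot\max(\mathbf{s})-\sum_c s_c=\sum_c(\max(\mathbf{s})-s_c)$ is a slip: the right-hand side equals $l\cdot\max(\mathbf{s})-\sum_c s_c$, not $e\cdot\max(\mathbf{s})-\sum_c s_c$. More seriously, neither version strictly decreases. Take the paper's own example $\mathbf{s}=(5,3,5,0,1)$ with $e=3$, $l=5$: one step gives $\mathbf{s}^{(1)}=(2,3,5,0,1)$, with $\max$ unchanged and $\sum_c s_c$ dropping by $e=3$, so $l\cdot\max-\sum_c s_c$ \emph{increases} from $11$ to $14$ (and $e\cdot\max-\sum_c s_c$ from $1$ to $4$). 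Whenever the maximum survives the step, your quantity goes up by $e$; whenever $\max$ drops by exactly $1$, it is unchanged.

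Your earlier lexicographic idea, tracking $(\max(\mathbf{s}),\,\#\{c:s_c=\max(\mathbf{s})\})$, does strictly decrease at every step, but this order is not well-founded on its own: $\max$ could in principle descend forever. You assert that the spread $\max-\min$ ``cannot grow without bound'', but the sentence offered in support (that $l\cdot\max$ bounds $\sum_c s_c$ from above) does not yield this; indeed in the same example the spread goes $5,5,3,2,3,2$, so it is not monotone, and nothing you have written bounds it. A genuine argument is still needed here.

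Finally, you dismiss the ordering condition in $\mathcal{T}_{l,e}$ as ``immaterial'', but it is not: the period construction (conditions (1) and (3) of Definition~\ref{def3}) depends on the row indices, so one cannot freely permute the coordinates of $\mathbf{s}$ mid-procedure. For instance $\mathbf{s}=(3,0)$ with $e=2$ has spread $3$, passes through $\mathbf{s}^{(1)}=(1,0)$ which has spread $1\le e-1$ but is \emph{not} in $\mathcal{T}_{l,e}$, and only reaches $\mathcal{T}_{l,e}$ at $\mathbf{s}^{(2)}=(-1,0)$. The paper handles precisely this by a two-stage argument: a first potential $f(\mathbf{s})=\sum_i\min(0,s_{i+1}-s_i)$ forces the coordinates to become weakly increasing, after which a second potential $g(\mathbf{s})=\sum_i\min(0,e-1-(s_{i+1}-s_i))$ forces all consecutive gaps below $e$. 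Your single global monovariant would need to control both phenomena at once, and as it stands it controls neither.
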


\begin{proof}
First observe that for any $i=0,\ldots,l-2$ such that $s_{i+1}-s_{i}<0$, we
have $s_{i+1}^{\prime}-s_{i}^{\prime}\geq s_{i+1}-s_{i}$ with equality if and
only if $s_{i}^{\prime}=s_{i}$ and $s_{i+1}^{\prime}=s_{i+1}$.\ For any
$\mathbf{s\in}\mathbb{Z}^{l}$, set%
\[
f(\mathbf{s)=}\sum_{i=0}^{l-2}\mathrm{min}(0,s_{i+1}-s_{i}).
\]
For any $i=0,\ldots,l-2$ with $s_{i+1}-s_{i}<0$, there is an integer $p$ such
that $s_{i}^{(p)}<s_{i}$ (the $i$-th coordinates of the $l$-tuples
$\mathbf{s}^{(p)},p>0$ cannot be left all untouched by the iteration of our
procedure).\ Therefore, for such a $p$, we have $f(\mathbf{s}^{(p)}%
\mathbf{)>}f(\mathbf{s)}$. Since $f(\mathbf{s})\leq 0$ for any $\mathbf{s\in
}\mathbb{Z}^{l}$, we deduce there exists an integer $p_{0}$ such that $f(\mathbf{s}^{p_0})=0$ and thus such that
$s_{i+1}^{(p_{0})}-s_{i}^{(p_{0})}\geq0$ for any $i=0,\ldots,l-2$. We can thus
assume that the coordinates of $\mathbf{s\in}\mathbb{Z}^{l}$ satisfy
$s_{i+1}-s_{i}\geq0$ for any $i=0,\ldots,l-2$. One then easily verifies that
for any $p\geq0$, the coordinates of $\mathbf{s}^{(p)}$ also weakly
increase.\ Observe that for any $i=0,\ldots,l-2$ such that $s_{i+1}-s_{i}\geq
e$, we have $s_{i+1}^{\prime}-s_{i}^{\prime}\leq s_{i+1}-s_{i}$ with equality
if and only if $s_{i}^{\prime}=s_{i}$ and $s_{i+1}^{\prime}=s_{i+1}$.\ Set%
\[
g(\mathbf{s})=\sum_{i=0}^{l-2}\mathrm{min}(0,e-1-(s_{i+1}-s_{i})).
\]
Assume $\mathbf{s}\mathbf{\notin}\mathcal{T}_{l,e}$. Since a pair
$(s_{i},s_{i+1})$ with $s_{i+1}-s_{i}\geq e$ cannot remain untouched by the
iteration of our procedure, there exists an integer $p$ such that
$g(\mathbf{s}^{(p)})>g(\mathbf{s}).\;$So we have an integer $p_{0}$ such that
$g(\mathbf{s}^{(p_{0})})=0$ and since the coordinates of $\mathbf{s}^{(p_{0}%
)}$ weakly increase, one has $\mathbf{s}^{(p_{0})}\in\mathcal{T}_{l,e}$ as desired.
\end{proof}

\begin{Exa}
Consider $\mathbf{s}=(5,3,5,0,1)$ for $e=3$. We obtain
\begin{align*}
\mathbf{s}^{(0)}  &  =(5,3,5,0,1),\mathbf{s}^{(1)}=(2,3,5,0,1),\mathbf{s}%
^{(2)}=(2,2,3,0,1),\\
\mathbf{s}^{(3)}  &  =(0,2,2,0,1),\mathbf{s}^{(4)}=(-1,0,2,0,1)\text{ and
}\mathbf{s}^{(5)}=(-1,-1,0,0,1),
\end{align*}
and we have $\mathbf{s}^{(5)}\in\mathcal{T}_{5,3}$.
\end{Exa}

\section{Module structures on the Fock space}

We now introduce quantum group modules structures on the Fock space of level
$l$ and describe the associated crystal graphs.

\subsection{Roots and weights}

\label{act1} Let $e\in\mathbb{Z}_{>1}\cup\{\infty\}$. Let ${\mathcal{U}}%
_{q}^{\prime}{(\widehat{\mathfrak{sl}_{e}})}$ (resp. ${\mathcal{U}%
_{q}(\mathfrak{sl}_{\infty})}$) be the quantum group of affine type
$A_{e-1}^{(1)}$ (resp. of type $A_{\infty}$). This is an associative
$\mathbb{Q}(q)$-algebra with generators $e_{i},f_{i},t_{i},t_{i}^{-1}$ with
$i=0,...,e-1$ (resp. $i\in\mathbb{Z}).\;$We refer to \cite[chap. 6]{JG} for
the complete description of the relations between these generators since we do
not use them in the sequel. To avoid repetition, we will attach a label $e $
to the notions we define. When $e$ is finite, they are associated with
${\mathcal{U}}_{q}^{\prime}{(\widehat{\mathfrak{sl}_{e}})}$ whereas the case
$e=\infty$ corresponds to ${\mathcal{U}_{q}(\mathfrak{sl}_{\infty})}$.

We write $\Lambda_{i,e},i=0,...,e-1$ for the fundamental weights. The simple
roots are then given by:
\[
\alpha_{i,e}=-\Lambda_{i-1,e}+2\Lambda_{i,e}-\Lambda_{i+1,e}%
\]
for $i=0,\ldots,e-1$.\ As usual the indices are taken modulo $e$. For
$\mathbf{s}\in\mathbb{Z}^{l}$, we also write $\Lambda_{\mathbf{s},e}%
:=\sum_{0\leq c\leq l-1}\Lambda_{s_{c},e}$.

There is an action of the extended affine symmetric group $\widehat{S}_{l}$ on
$\mathbb{Z}^{l}$ (see \cite[\S 5.1]{JL}). This group is generated by the
elements $\sigma_{1},...,\sigma_{l-1}$ and $y_{0},....,y_{l-1}$ together with
the relations
\[
\sigma_{c}\sigma_{c+1}\sigma_{c}=\sigma_{c+1}\sigma_{c}\sigma_{c+1}%
,\quad\sigma_{c}\sigma_{d}=\sigma_{d}\sigma_{c}\text{ for }\left|  c-d\right|
>1,\quad\sigma_{c}^{2}=1,
\]
\[
y_{c}y_{d}=y_{d}y_{c},\quad\sigma_{c}y_{d}=y_{d}\sigma_{c}\text{ for }d\neq
c,c+1,\quad\sigma_{c}y_{c}\sigma_{c}=y_{c+1}.
\]
for relevant indices. Then we obtain a faithful action of $\widehat{S}_{l}$ on
$\mathbb{Z}^{l}$ by setting for any ${\mathbf{s}}=(s_{0},...,s_{l-1}%
)\in\mathbb{Z}^{l}$%
\[
\sigma_{c}({\mathbf{s}})=(s_{0},...,s_{c},s_{c-1},...,s_{l-1})\text{ and
}y_{c}({\mathbf{s}})=(s_{0},...,s_{c-1},s_{c}+e,...,s_{l-1}).
\]
Given $\mathbf{s,s}^{\prime}\in\mathbb{Z}^{l}$, we have $\Lambda
_{\mathbf{s},e}=\Lambda_{\mathbf{s}^{\prime},e}$ if and only if $\mathbf{s}$
and $\mathbf{s^{\prime}}$ are in the same orbit modulo the action of
$\widehat{S}_{l}$. In this case, we denote $\mathbf{s\equiv}_{e}%
\mathbf{s}^{\prime}$. Set%
\begin{align}
\mathcal{V}_{l,e}  &  =\{\mathbf{v}=(v_{0},\ldots,v_{l-1})\in\mathbb{Z}%
^{l}\mid0\leq v_{0}\leq\cdots\leq v_{l-1}\leq e-1\} \label{def_Vl}%
\end{align}
Given any $\mathbf{s}\in\mathbb{Z}^{l}$ there exists a unique $\mathbf{v}$ in
$\mathcal{V}_{l}$ such that $\mathbf{s\equiv}_{e}\mathbf{v}$.

\subsection{Module structures}

\label{act2} We fix $\mathbf{s}\in\mathbb{Z}^{l}$. The \textit{Fock space}
$\mathcal{F}_{\mathbf{s}}$ is the $\mathbb{Q}(q)$-vector space defined as
follows:
\[
\mathcal{F}_{\mathbf{s}}=\bigoplus_{n\in\mathbb{Z}_{\geq0}}\bigoplus
_{{\boldsymbol{\lambda}}\vdash_{l}n}\mathbb{Q}(q){\boldsymbol{\lambda}}.
\]
According to \cite[\S 2.1]{uglov}, there is an action of $\mathcal{U}%
_{q}^{\prime}(\widehat{\mathfrak{sl}}_{e})$ on the Fock space (see
\cite[\S 6.2]{JG}).\ This action depends on $e$ and we will denote by
$\mathcal{F}_{\mathbf{s},e}$ the $\mathcal{U}_{q}^{\prime}(\widehat
{\mathfrak{sl}}_{e})$-module so obtained. In $\mathcal{F}_{\mathbf{s},e}$,
each partition is a weight vector (with respect to a multicharge $\mathbf{s}$)
with weight given by (see \cite[\S 4.2]{uglov})
\[
\mathrm{wt}({\boldsymbol{\lambda}},\mathbf{s})_{e}=\Lambda_{\mathbf{s},e}%
-\sum_{0\leq i\leq e-1}N_{i}({\boldsymbol{\lambda}},\mathbf{s})\alpha_{i,e},
\]
where $N_{i}({\boldsymbol{\lambda}},\mathbf{s})$ denotes the number of
$i$-nodes in ${\boldsymbol{\lambda}}$ (where the residues are computed with
respect to $\mathbf{s}$). For any $e\in\mathbb{Z}_{>1}\cup\{\infty\}$, the
empty multipartition is always a highest weight vector of weight
$\Lambda_{\mathbf{s},e}$. We write $V_{e}(\mathbf{s})$ for the associated
$\mathcal{U}_{q}^{\prime}(\widehat{\mathfrak{sl}}_{e})$-module. We clearly
have $V_{e}(\mathbf{s})\simeq V_{e}(\mathbf{s}^{\prime})$ if and only if
$\mathbf{s\equiv}_{e}\mathbf{s}^{\prime}$.\ 

In general, the modules structures on $\mathcal{F}_{\mathbf{s}}$ are not
compatible when we consider distinct values of $e$.\ Nevertheless, we have the
following proposition stated in \cite[\S 2.1]{AJL}.

\begin{Prop}
\label{prop_compati}Let $e\in\mathbb{N}_{>0}$.

\begin{enumerate}
\item Any $\mathcal{U}_{q}(\mathfrak{sl}_{\infty})$-irreducible component
$V_{\infty}$ of $\mathcal{F}_{\mathbf{s},\infty}$ is stable under the action
of the ${\mathcal{U}}_{q}^{\prime}{(\widehat{\mathfrak{sl}_{e}})}$-Chevalley
generators $e_{i},f_{i},t_{i},i\in\mathbb{Z}/e\mathbb{Z}$.\ Therefore
$V_{\infty}$ has also the structure of a ${\mathcal{U}}_{q}^{\prime}%
{(\widehat{\mathfrak{sl}_{e}})}$-module.

\item In particular, the ${\mathcal{U}}_{q}^{\prime}{(\widehat{\mathfrak{sl}%
_{e}})}$-module $V_{\infty}({\mathbf{s}})$ is endowed with the structure of a
${\mathcal{U}}_{q}^{\prime}{(\widehat{\mathfrak{sl}_{e}})}$-module.\ Moreover
$V_{e}({\mathbf{s}})$ then coincides with the ${\mathcal{U}}_{q}^{\prime
}{(\widehat{\mathfrak{sl}_{e}})}$-irreducible component of $V_{\infty
}({\mathbf{s}})$ with highest weight vector the empty $l$-partition
\textbf{$\uemptyset$}.
\end{enumerate}
\end{Prop}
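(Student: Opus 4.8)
The plan is to prove Proposition~\ref{prop_compati} by reducing it to the compatibility statement already recorded in \cite{AJL,jim}, which asserts that the $\mathcal{U}_{q}^{\prime}(\widehat{\mathfrak{sl}_{e}})$-action on $\mathcal{F}_{\mathbf{s}}$ is obtained from the $\mathcal{U}_{q}(\mathfrak{sl}_{\infty})$-action by ``folding'' the Chevalley generators modulo $e$. Concretely, the $i$-th Chevalley operator for $\widehat{\mathfrak{sl}_{e}}$ is built (up to the appropriate $q$-powers coming from the weight functions) from the operators $f_{j}$, $j\in\mathbb{Z}$, for $\mathfrak{sl}_{\infty}$ with $j\equiv i\ (\mathrm{mod}\ e)$. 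First I would make this precise on the level of the Fock space: recall from Section~\ref{act2} that a node $\gamma$ has residue $\mathrm{res}(\gamma)\in\mathbb{Z}/e\mathbb{Z}$ equal to its content modulo $e$, so that adding/removing an $i$-node for $\widehat{\mathfrak{sl}_{e}}$ corresponds to adding/removing a node of content $j$ for $\mathfrak{sl}_{\infty}$ for some $j\equiv i$. This gives, for each $i\in\mathbb{Z}/e\mathbb{Z}$, an expression of $e_{i},f_{i}$ (for $\widehat{\mathfrak{sl}_{e}}$) in terms of the family $\{e_{j},f_{j}:j\equiv i\}$ (for $\mathfrak{sl}_{\infty}$), and $t_{i}$ similarly in terms of the $t_{j}$'s.

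With that in hand, part~(1) is essentially immediate: if $V_{\infty}\subseteq\mathcal{F}_{\mathbf{s},\infty}$ is a $\mathcal{U}_{q}(\mathfrak{sl}_{\infty})$-submodule, then it is stable under every $e_{j},f_{j},t_{j}$ for $j\in\mathbb{Z}$, hence under any operator expressible through these, and in particular under the folded generators $e_{i},f_{i},t_{i}$, $i\in\mathbb{Z}/e\mathbb{Z}$. Thus $V_{\infty}$ carries a $\mathcal{U}_{q}^{\prime}(\widehat{\mathfrak{sl}_{e}})$-module structure, which is moreover a submodule of $\mathcal{F}_{\mathbf{s},e}$ since the two actions on $\mathcal{F}_{\mathbf{s}}$ agree through the folding. (One should also note the weight compatibility: $\mathrm{wt}(\boldsymbol{\lambda},\mathbf{s})_{\infty}$ projects onto $\mathrm{wt}(\boldsymbol{\lambda},\mathbf{s})_{e}$ under the natural map on weight lattices sending $\Lambda_{j,\infty}\mapsto\Lambda_{j\bmod e,e}$, since $N_{i}(\boldsymbol{\lambda},\mathbf{s})=\sum_{j\equiv i}N_{j}(\boldsymbol{\lambda},\mathbf{s})$.)

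For part~(2) I would specialize to $V_{\infty}=V_{\infty}(\mathbf{s})$, the $\mathcal{U}_{q}(\mathfrak{sl}_{\infty})$-submodule of $\mathcal{F}_{\mathbf{s},\infty}$ generated by $\uemptyset$. By part~(1) it is a $\mathcal{U}_{q}^{\prime}(\widehat{\mathfrak{sl}_{e}})$-submodule of $\mathcal{F}_{\mathbf{s},e}$. Now $\uemptyset$ is a highest weight vector for $\widehat{\mathfrak{sl}_{e}}$ of weight $\Lambda_{\mathbf{s},e}$ (as recalled in Section~\ref{act2}), so the $\widehat{\mathfrak{sl}_{e}}$-submodule of $V_{\infty}(\mathbf{s})$ generated by $\uemptyset$ is a quotient of the irreducible highest weight module; since in $\mathcal{F}_{\mathbf{s},e}$ the vector $\uemptyset$ generates precisely the irreducible $V_{e}(\mathbf{s})$ (because $\mathcal{F}_{\mathbf{s},e}$ is integrable and $\uemptyset$ is a highest weight vector, the submodule it generates is irreducible), we conclude $V_{e}(\mathbf{s})\subseteq V_{\infty}(\mathbf{s})$ and that it is exactly the $\widehat{\mathfrak{sl}_{e}}$-irreducible component containing $\uemptyset$.

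The main obstacle is the first step: rather than re-deriving the precise formulas of \cite{uglov,jim} for the folded action — a lengthy bookkeeping of $q$-powers indexed by addable/removable nodes of each residue — I would phrase it as a black box quoting \cite[\S 2.1]{AJL} and \cite[\S 2.1]{uglov}, namely that the $\mathcal{U}_{q}^{\prime}(\widehat{\mathfrak{sl}_{e}})$-structure on $\mathcal{F}_{\mathbf{s}}$ factors through the $\mathcal{U}_{q}(\mathfrak{sl}_{\infty})$-structure via the residue map. Everything else is then formal: stability of a submodule under a subset of operators, and the standard fact that a highest weight vector in an integrable module generates an irreducible submodule isomorphic to the corresponding $V_{e}$. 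One subtlety worth flagging is that ``compatible'' here does \emph{not} mean the $\mathfrak{sl}_{\infty}$-crystal operators restrict to the $\widehat{\mathfrak{sl}_{e}}$-crystal operators — that finer statement is exactly what Section~4 of the paper will establish — so in this proof I would only use compatibility at the level of the module actions, not crystal bases.
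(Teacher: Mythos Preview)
Your proposal is correct and aligns with the paper's treatment: the paper gives no proof of Proposition~\ref{prop_compati} at all, simply stating it as a result from \cite[\S 2.1]{AJL}. Your sketch supplies exactly the formal details the paper omits---the folding of Chevalley generators as a black box from \cite{AJL,uglov}, stability of a $\mathcal{U}_{q}(\mathfrak{sl}_{\infty})$-submodule under operators built from the $\mathfrak{sl}_{\infty}$-generators, and the standard argument that a highest weight vector in an integrable module generates an irreducible submodule---and your closing caveat distinguishing module-level compatibility from crystal-level compatibility is both correct and anticipates precisely what Section~4 is for.
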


\begin{Rem}
The algebras $\mathfrak{sl}_{\infty}$ and ${\widehat{\mathfrak{sl}_{e}}}$ can
be realized as algebras of infinite matrices (see \cite{Kac}). Then
${\widehat{\mathfrak{sl}_{e}}}$ is regarded as a subalgebra of $\mathfrak{sl}%
_{\infty}$. In particular, the irreducible $\mathfrak{sl}_{\infty}$-module of
highest weight $\Lambda_{\mathbf{s},\infty}$ admits the structure of a
${\widehat{\mathfrak{sl}_{e}}}$-module by restriction. The highest
${\widehat{\mathfrak{sl}_{e}}}$-weights involved in its decomposition into
irreducible then coincide with those appearing in the decomposition of
$V_{\infty}({\mathbf{s}})$ into its irreducible ${\mathcal{U}}_{q}^{\prime
}{(\widehat{\mathfrak{sl}_{e}})}$-components.
\end{Rem}

\subsection{Crystal bases and crystal graphs}

\label{act3}We now recall some results on the crystal bases of $\mathcal{F}%
_{\mathbf{s},e}$ established in \cite{jim} and \cite{uglov}. Let
$\mathbb{A}(q)$ be the ring of rational functions without pole at $q=0$.\ Set
\begin{align*}
\mathcal{L}  &  :=\bigoplus_{n\geq0}\bigoplus_{{\boldsymbol{\lambda}}%
\vdash_{l}n}\mathbb{A}(q){\boldsymbol{\lambda}}\text{ and}\\
\mathcal{G}  &  :=\{{\boldsymbol{\lambda}}\text{ }(\text{mod }q\mathcal{L}%
)\mid{\boldsymbol{\lambda}}\text{{\ is an }}l\text{-partition}\}.
\end{align*}

\begin{Th}
[Jimbo-Misra-Miwa-Okado, Uglov]\label{Th_BcF}The pair $(\mathcal{L}%
,\mathcal{G})$ is a crystal basis for $\mathcal{F}_{\mathbf{s},e}$ and
$\mathcal{F}_{\mathbf{s},\infty}.$
\end{Th}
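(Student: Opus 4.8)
The plan is to reduce the statement to results already available in the literature, since Theorem \ref{Th_BcF} is essentially a compilation of the theorems of Jimbo--Misra--Miwa--Okado \cite{jim} and Uglov \cite{uglov} recast in the notation of the present paper. First I would recall, following \cite[\S 6.2]{JG} and \cite[\S 2.1]{uglov}, the explicit formulas for the Kashiwara operators $\tilde{e}_{i}$, $\tilde{f}_{i}$ acting on the set of $l$-partitions: one orders the addable and removable $i$-nodes of $\boldsymbol{\lambda}$ according to a suitable order depending on $\mathbf{s}$ (and on $e$), forms the associated sequence of $+$ and $-$ signs, cancels adjacent $-+$ pairs, and reads off $\tilde{e}_{i}$ and $\tilde{f}_{i}$ from the reduced word exactly as in the abstract theory of crystals. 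The key point is that these operators preserve the $\mathbb{A}(q)$-lattice $\mathcal{L}$ and induce well-defined operators on $\mathcal{G}=\mathcal{L}/q\mathcal{L}$, mapping each basis vector $\boldsymbol{\lambda} \bmod q\mathcal{L}$ to another basis vector or to $0$.

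The main steps would then be: (1) verify that $(\mathcal{L},\mathcal{G})$ is stable under the $\tilde{e}_{i}$ and $\tilde{f}_{i}$ for all $i$ (in the finite $e$ case, $i\in\mathbb{Z}/e\mathbb{Z}$; in the $e=\infty$ case, $i\in\mathbb{Z}$), which is the content of the combinatorial crystal rule above; (2) check the crystal basis axioms, namely that $\mathcal{L}$ is a free $\mathbb{A}(q)$-module whose rank in each weight space matches, that $\mathcal{L}$ is invariant under the divided powers $\tilde{e}_{i}$, $\tilde{f}_{i}$, and that on $\mathcal{G}$ one has $\tilde{f}_{i}\tilde{e}_{i}(b)=b$ whenever $\tilde{e}_{i}(b)\neq 0$; (3) identify the global crystal basis (canonical basis) of $\mathcal{F}_{\mathbf{s},e}$ constructed by Uglov and observe that its image modulo $q\mathcal{L}$ is precisely $\mathcal{G}$, so that $(\mathcal{L},\mathcal{G})$ is indeed \emph{the} crystal basis at $q=0$. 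For the $\mathcal{U}_{q}(\mathfrak{sl}_{\infty})$-structure one argues in the same way, using that the $\mathfrak{sl}_{\infty}$ crystal operators are the appropriate limits of those for $\widehat{\mathfrak{sl}_{e}}$ as $e\to\infty$, so the same lattice $\mathcal{L}$ and the same set $\mathcal{G}$ work; alternatively one invokes Proposition \ref{prop_compati} together with the fact that each $\mathcal{U}_{q}(\mathfrak{sl}_{\infty})$-irreducible component is a highest weight module, for which the abstract existence and uniqueness of crystal bases (Kashiwara) applies.

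I expect the main obstacle to be bookkeeping rather than conceptual: precisely matching the normalizations used in \cite{jim} and \cite{uglov} (which differ in the choice of $\mathbf{s}$, the ordering conventions on nodes, and the comultiplication used to define the module structure on the Fock space) with the conventions fixed in Section 3 of the present paper, so that the combinatorial crystal rule stated here is literally the one that governs $\mathcal{L}/q\mathcal{L}$. Once the dictionary between conventions is set up, the statement is immediate from the cited work, and I would simply write ``This is \cite[Theorem]{jim} for $e$ finite and \cite[\S 4]{uglov} in general, translated into the notation of \S\ref{act2}--\S\ref{act3}'' with a short verification of the translation. In particular, no new mathematics is needed beyond carefully quoting Uglov's construction of the canonical basis and its specialization at $q=0$.
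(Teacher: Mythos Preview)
Your proposal is correct and matches the paper's approach: the theorem is stated with attribution to Jimbo--Misra--Miwa--Okado and Uglov and is given no proof in the paper, being treated as a direct citation of \cite{jim} and \cite{uglov}. Your plan to reduce to the literature with a convention check is exactly the right reading, though even the convention-matching you outline is more than the paper itself supplies.
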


Observe that the crystal basis of the Fock space is the same for
$\mathcal{F}_{\mathbf{s},e}$ and $\mathcal{F}_{\mathbf{s},\infty}$.
Nevertheless, the crystal structures $\mathcal{G}_{e,\mathbf{s}}$ and
$\mathcal{G}_{\infty,\mathbf{s}}$ on $\mathcal{G}$ do not coincide for
$\mathcal{F}_{\mathbf{s},e} $ and $\mathcal{F}_{\mathbf{s},\infty}.$ To
describe these crystal structures we begin by defining a total order on the
removable or addable $i$-nodes. Let $\gamma$, $\gamma^{\prime}$ be two
removable or addable $i$-nodes of ${\boldsymbol{\lambda}}$. We set
\[
\gamma\prec_{\mathbf{s}}\gamma^{\prime}\qquad\overset{\text{def}%
}{\Longleftrightarrow}\qquad\left\{
\begin{array}
[c]{ll}%
\mbox{either} & b-a+s_{c}<b^{\prime}-a^{\prime}+s_{c^{\prime}},\\
\mbox{or} & b-a+s_{c}=b^{\prime}-a^{\prime}+s_{c^{\prime}}\text{ and
}c>c^{\prime}.
\end{array}
\right.
\]



Let ${{{\boldsymbol{\lambda}}}}$ be an $l$-partition. We can consider its set
of addable and removable $i$-nodes. Let $w_{i}({\boldsymbol{\lambda}})$ be the
word obtained first by writing the addable and removable $i$-nodes of
${{{\boldsymbol{\lambda}}}}$ in {increasing} order with respect to
$\prec_{\mathbf{s}}$ next by encoding each addable $i$-node by the letter $A$
and each removable $i$-node by the letter $R$.\ Write $\widetilde{w}%
_{i}({\boldsymbol{\lambda}})=A^{p}R^{q}$ for the word derived from $w_{i}$ by
deleting as many subwords of type $RA$ as possible. The word $w_{i}%
({\boldsymbol{\lambda}})$ is called the ${i}${-word} of ${\boldsymbol{\lambda
}}$ and $\widetilde{w}_{i}({\boldsymbol{\lambda}})$ the {reduced $i$-word} of
${\boldsymbol{\lambda}}$. The addable $i$-nodes in $\widetilde{w}%
_{i}({\boldsymbol{\lambda}})$ are called the {normal addable $i$-nodes}. The
removable $i$-nodes in $\widetilde{w}_{i}({\boldsymbol{\lambda}})$ are called
the {normal removable $i$-nodes}. If $p>0,$ let $\gamma$ be the rightmost
addable $i$-node in $\widetilde{w}_{i}$. The node $\gamma$ is called the {good
addable $i$-node}. If $q>0$, the leftmost removable $i$-node in $\widetilde
{w}_{i}$ is called the {good removable $i$-node}. We set
\begin{equation}
\varphi_{i}({\boldsymbol{\lambda})=p}\text{ and }{\varepsilon}_{i}%
({\boldsymbol{\lambda})=q.} \label{phi-epsi}%
\end{equation}
By Kashiwara's crystal basis theory \cite[\S 4.2]{Kas} we have another useful
expression for $\mathrm{wt}({\boldsymbol{\lambda}},\mathbf{s})_{e}$%

\begin{equation}
\mathrm{wt}({\boldsymbol{\lambda}},\mathbf{s})_{e}=\sum_{i\in\mathbb{Z}%
/e\mathbb{Z}}(\varphi_{i}({\boldsymbol{\lambda})-\varepsilon}_{i}%
({\boldsymbol{\lambda}))\Lambda}_{i,e}\text{.} \label{wt_crystal}%
\end{equation}

We denote by $\mathcal{G}_{e,\mathbf{s}}$ the crystal of the Fock space
computed using the Kashiwara operators $\widetilde{e}_{i}$ and $\widetilde
{f}_{i}$. By \cite{jim}, this is the graph with

\begin{itemize}
\item vertices : the $l$-partitions ${\boldsymbol{\lambda}}\vdash_{l}n$ with
$n\in\mathbb{Z}_{\geq0}$

\item arrows: ${\boldsymbol{\lambda}}\overset{i}{\rightarrow}{\boldsymbol{\mu
}}$ that is $\widetilde{e}_{i}{\boldsymbol{\mu}}={\boldsymbol{\lambda}}$ if
and only if ${{{\boldsymbol{\mu}}}}$ is obtained by adding to
${{{\boldsymbol{\lambda}}}}$ a good addable $i$-node, or equivalently,
${\boldsymbol{\lambda}}$ is obtained from ${\boldsymbol{\mu}}$ by removing a
good removable $i$-node.
\end{itemize}

Note that the order induced by $\prec_{\mathbf{s}}$ does not change if we
translate each component of the multicharge by a common multiple of $e$ (nor
does the associated ${\mathcal{U}}_{q}^{\prime}{(\widehat{\mathfrak{sl}_{e}}%
)}$-weight). Thus, if there exists $k\in\mathbb{Z}$ such that $\mathbf{s}%
=(s_{0}^{\prime},s_{1}^{\prime},\ldots,s_{l-1}^{\prime})=(s_{0}+k.e,s_{1}%
+k.e,\ldots,s_{l-1}+k.e)$ then the crystal $\mathcal{G}_{e,\mathbf{s}}$ and
$\mathcal{G}_{e,\mathbf{s}^{\prime}}$ are identical.

The crystal $\mathcal{G}_{e,\mathbf{s}}$ has several connected components.
They are parametrized by its highest weight vertices which are the
$l$-partitions ${\boldsymbol{\lambda}}$ with no good removable node (that is
such that ${\varepsilon}_{i}({\boldsymbol{\lambda})=0}$). Given such an
$l$-partition ${\boldsymbol{\lambda}}$, we denote by $\mathcal{G}%
_{e,\mathbf{s}}({\boldsymbol{\lambda}})$ its associated connected
component.\ One easily verifies that $\mathrm{wt}(\uemptyset
,\mathbf{s})_{e}=\Lambda_{\mathbf{s}(\text{mod }e)}$.\ So the crystal
$\mathcal{G}_{e,\mathbf{s}}(\uemptyset)$ is isomorphic to the abstract crystal
$\mathcal{G}_{e}(\Lambda_{\mathbf{s}(\text{mod }e)})$. In general, for any
highest weight vertex ${\boldsymbol{\lambda}}$, $\mathcal{G}_{e,\mathbf{s}%
}({\boldsymbol{\lambda}})$ is isomorphic to the abstract crystal
$\mathcal{G}_{e}(\mathrm{wt}({\boldsymbol{\lambda}},\mathbf{s})_{e})$. By
setting $\Lambda_{\mathbf{v}(\text{mod }e)}=\mathrm{wt}({\boldsymbol{\lambda}%
},\mathbf{s})_{e},$ we thus obtain a crystal isomorphism $f_{\mathbf{s}%
,\mathbf{v}}^{e,{\boldsymbol{\lambda}}}:\mathcal{G}_{e,\mathbf{s}%
}({\boldsymbol{\lambda}})\rightarrow\mathcal{G}_{e,\mathbf{v}}(\uemptyset).$

\subsection{Crystal graphs and symbols}

\label{wordsy}

Consider $i\in\mathbb{Z}/e\mathbb{Z}$. The reduced $i$-word $\widetilde{w}%
_{i}$ of a multipartition ${\boldsymbol{\lambda}}$ may be easily computed from
its symbol. Let $j_{low}\in\mathbb{Z}$ be the greatest integer such that
$j_{low}\equiv i($mod $e)$ and such that each row of $\mathfrak{B}%
({\boldsymbol{\lambda}},\mathbf{s})$ contains all the integers lowest or equal
to $j_{low}.$ Such an integer exists since the rows of our symbols are
infinite. For any $j\in\mathbb{Z}$ such that $j\equiv i($mod $e)$ and $j\geq
j_{low}$ let $u_{j}$ be the word obtained by reading in the rows of
$\mathfrak{B}({\boldsymbol{\lambda}},\mathbf{s})$ the entries $j$ or $j+1$
from top to bottom and right to left. Write
\[
u_{i}=\prod_{t=0}^{\infty}u_{j_{0}+te}%
\]
for the concatenation of the words $u_{j}.\ $Here all but a finite number of
words $u_{j_{0}+te}$ are empty. We then encode in $u_{i}$ each letter $j$ by
$A$ and each letter $j+1$ by $R$ and delete recursively the factors $RA$.
Write $\widetilde{u}_{i}$ for the resulting word.


\begin{lemma}
\label{lem_util}We have $\widetilde{w}_{i}=\widetilde{u}_{i}$.
\end{lemma}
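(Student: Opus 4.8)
The plan is to show that the combinatorial recipe computing $\widetilde{u}_i$ from the symbol $\mathfrak{B}({\boldsymbol{\lambda}},\mathbf{s})$ is just a reformulation of the original recipe computing $\widetilde{w}_i$ from the ordered list of addable/removable $i$-nodes. The link between the two is the bijection recalled in Section~\ref{def2}: an entry of the symbol equal to $j$ in row $c$ corresponds to a node $(i,\lambda_i^c,c)$ on the right rim (or a ``phantom'' node with part $0$), and its value $b-a+s_c$ is precisely the content of the node it encodes — hence an addable (resp.\ removable) $i$-node of residue $i$ corresponds to an entry of the symbol, and one must check which entries play the role of addable vs removable nodes.

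First I would make precise the following dictionary. For a fixed $i\in\mathbb{Z}/e\mathbb{Z}$, an integer $j\equiv i\ (\mathrm{mod}\ e)$, and a row $c$: if $j$ occurs as an entry $\mathfrak{B}({\boldsymbol{\lambda}},\mathbf{s})_{a}^{c}=j$ but $j+1$ does not occur in row $c$, then the node encoded by this entry is removable (one can decrease $\lambda_a^c$ by one and stay a partition, and the content of the removed node has residue $i$); if $j+1$ occurs as $\mathfrak{B}({\boldsymbol{\lambda}},\mathbf{s})_{a}^{c}=j+1$ but $j$ does not occur in row $c$, then there is an addable $i$-node in row $c$ of component $\lambda^c$ at the relevant place; and if both $j$ and $j+1$ occur in row $c$ then there is neither an addable nor a removable $i$-node coming from that row at that content level (the corresponding $RA$ cancels). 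This is exactly why, in the construction of $u_i$, one reads the entries equal to $j$ or $j+1$, encodes $j$ by $A$ — wait, one must be careful with the convention: an entry $j+1$ present with $j$ absent signals an \emph{addable} node, an entry $j$ present with $j+1$ absent signals a \emph{removable} node, so I would double-check that the paper's encoding of ``$j$ by $A$ and $j+1$ by $R$'' together with the reading order produces the same $A$'s and $R$'s as $w_i({\boldsymbol{\lambda}})$, up to the $RA$-cancellations which are performed on both sides.

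Next I would verify that the \emph{order} in which the letters appear agrees. In $w_i({\boldsymbol{\lambda}})$ the nodes are listed in increasing order for $\prec_{\mathbf{s}}$: first by content $b-a+s_c$, and for equal content by decreasing $c$. Reading the symbol by blocks $u_{j_0+te}$ with increasing $t$ realizes the ordering by content (since content is constant along the relevant diagonal and equals the symbol entry up to the shift), and within each block $u_j$ the prescription ``top to bottom, right to left'' must be shown to reproduce the secondary order. Here I expect a small subtlety: in one block $u_j$ we read entries equal to $j$ and entries equal to $j+1$, which correspond to nodes of two different contents; I would argue that after the $RA$-reduction the interleaving of the $j$- and $(j+1)$-entries within a block does not affect the outcome, because an $R$ coming from a $j$-entry always sits, in the true $\prec_{\mathbf{s}}$-order, below the matching $A$ coming from a $(j+1)$-entry in the same row, so the cancellations are the same. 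Checking that the ``phantom'' nodes with part $0$ (the extra entries $b$ with $b=0$ that were added to the node set in Section~\ref{def1}) are harmless, i.e.\ that including or excluding them does not change $\widetilde{w}_i$, is another routine point; this is the reason the truncation of the symbol is allowed.

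The main obstacle, I expect, is purely bookkeeping: getting the three-way case analysis (an $i$-node is addable, removable, or neither, according to the pattern of $j$ and $j+1$ in a row) to line up exactly with the symbol-reading conventions — in particular the direction of the reading, the role of the lower bound $j_{low}$ (chosen so that rows are ``full'' below it, guaranteeing no addable/removable $i$-node is missed), and the interaction with the $RA$-cancellation which is done block-by-block on the symbol side but globally on the node side. Once the dictionary is set up carefully, the equality $\widetilde{w}_i=\widetilde{u}_i$ follows because both are obtained by the same ladder/cancellation procedure applied to two words that encode the same sequence of $A$'s and $R$'s in the same order.
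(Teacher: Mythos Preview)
Your outline follows essentially the same route as the paper: decompose both $w_i$ and $u_i$ into blocks indexed by the contents $j\equiv i\ (\mathrm{mod}\ e)$, set up a row-by-row dictionary between symbol entries $\{j,j+1\}$ and addable/removable $i$-nodes of content $j$, observe that the pattern $(j+1)\,j$ in a single row encodes $RA$ and hence cancels, and check that the reading order matches $\prec_{\mathbf{s}}$. That is exactly what the paper does.

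There is, however, a concrete error in your dictionary, and it propagates into your later worry about ``two different contents''. Recall that $\mathfrak{B}({\boldsymbol{\lambda}},\mathbf{s})_a^c=\lambda_a^c-a+s_c+1$, so the symbol entry exceeds the content of the rim node $(a,\lambda_a^c,c)$ by exactly $1$. Working this out: an entry $j$ at index $a$ with $j+1$ absent from row $c$ means $\lambda_{a-1}^c>\lambda_a^c$ (or $a=1$), i.e.\ an \emph{addable} $i$-node of content $j$ at $(a,\lambda_a^c+1,c)$; an entry $j+1$ at index $a$ with $j$ absent means $\lambda_{a+1}^c<\lambda_a^c$, i.e.\ a \emph{removable} $i$-node of content $j$ at $(a,\lambda_a^c,c)$. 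You have these swapped in both of your attempts. The paper's encoding ``$j\mapsto A$, $j+1\mapsto R$'' is the correct one, and once you adopt it you see that both the $j$-entry and the $(j+1)$-entry in a block correspond to nodes of the \emph{same} content $j$ --- so the subtlety you anticipate (interleaving two contents within a block) does not arise. After correcting this, your argument goes through and coincides with the paper's.
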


\begin{proof}
For any $j\equiv i($mod $e)$, write $w_{j}$ for the word obtained by reading
the addable or removable nodes with content $j$ (with respect to $\mathbf{s}$) successively in the partitions $\lambda^{c},c=l-1,\ldots
,0$.\ Observe there is no ambiguity since each partition $\lambda^{c}$
contains at most one node with content $j$ which is addable or removable. By definition
of the order $\prec_{\mathbf{s}}$, we have
\begin{equation}
w_{i}=\prod_{t=0}^{\infty}w_{j_{0}+te} \label{factorw_i}%
\end{equation}
where all but a finite set of the words $w_{i}$ are empty. Now we come back to
the word $u_{j}$. The contribution to the $c$-th row of $\mathfrak{B}%
({\boldsymbol{\lambda}},\mathbf{s})^{c}$ of $u_{j}$ is one of the factors
$(j+1)j,$ $j+1,$ $j$ or $\emptyset$.\ The factors $(j+1)j$ will be encoded
$RA$ so they will disappear during the cancellation process and we can neglect
their contribution.\ Write $u_{j}^{\prime}$ for the word obtained by deleting
in $u_{j}$ the factors $(j+1)j$ corresponding to entries in the same
row.\ There is a bijection between the letters of $u_{j}^{\prime}$ and $w_{j}$
which associates to each letter $j+1$ (resp. $j$) in $u_{j}^{\prime}$
appearing in the row $c$ a node $R$ (resp. $A$) of $w_{j}$. This easily
implies that $\widetilde{u}_{i}=\widetilde{w}_{i}.$
\end{proof}

\section{Compatibility of crystal bases and weight lattices}

\subsection{Crystal basis of the ${\mathcal{U}_{q}(\widehat{\mathfrak{sl}_{e}%
})}$-module $V_{\infty}(\mathbf{s})$}

Consider $e\in\mathbb{Z}_{>1}\cup\{+\infty\}$.\ The general theory of crystal
bases (see \cite{Kas}) permits to define the Kashiwara operators $\tilde
{e}_{i},\tilde{f}_{i},i\in\mathbb{Z}/e\mathbb{Z}$ on the whole Fock space
$\mathcal{F}_{\mathbf{s},e}$ by decomposing, for any $i\in\mathbb{Z}%
/e\mathbb{Z}$, $\mathcal{F}_{\mathbf{s},e}$ in irreducible ${\mathcal{U}}%
_{q}^{\prime}{(\widehat{\mathfrak{sl}_{e}})}_{i}$ components. These operators
do not depend on the decomposition considered (see \cite[\S 4.2]{Kas}). This
implies that the Kashiwara operators associated with any ${\mathcal{U}}%
_{q}^{\prime}{(\widehat{\mathfrak{sl}_{e}})}$-submodule $M_{e}$ of
$\mathcal{F}_{\mathbf{s},e}$ are obtained by restriction of the Kashiwara
operators defined on $\mathcal{F}_{\mathbf{s},e}$.

Set $\mathbf{s}\in\mathbb{Z}^{l}$. By Proposition \ref{prop_compati}, we know
that $V_{\infty}(\mathbf{s})$ has the structure of a ${\mathcal{U}}%
_{q}^{\prime}{(\widehat{\mathfrak{sl}_{e}})}$-module.\ Set $L_{\infty
}(\mathbf{s})=\mathcal{L}\cap V_{\infty}(\mathbf{s})$ and $B_{\infty
}(\mathbf{s})=L_{\infty}(\mathbf{s})/qL_{\infty}(\mathbf{s})$. It immediately
follows from crystal basis theory that the pair $(L_{\infty}(\mathbf{s}%
),B_{\infty}(\mathbf{s}))$ is a crystal basis for $V_{\infty}(\mathbf{s})$
regarded as a ${\mathcal{U}_{q}(\mathfrak{sl}_{\infty})}$-module.\ In fact
this is also true when $V_{\infty}(\mathbf{s})$ is regarded as an
${\mathcal{U}_{q}(\widehat{\mathfrak{sl}_{e}})}$-module.

\begin{Prop}
The pair $(L_{\infty}(\mathbf{s}),B_{\infty}(\mathbf{s}))$ is a ${\mathcal{U}%
_{q}(\widehat{\mathfrak{sl}_{e}})}$-crystal basis of the ${\mathcal{U}%
_{q}(\widehat{\mathfrak{sl}_{e}})}$-module $V_{\infty}(\mathbf{s}).$
\end{Prop}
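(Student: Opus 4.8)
The plan is to deduce this statement from the general machinery of crystal bases together with the compatibility result of Proposition~\ref{prop_compati}. The key point is that $V_\infty(\mathbf{s})$ is simultaneously a ${\mathcal{U}_{q}(\mathfrak{sl}_{\infty})}$-module and, by Proposition~\ref{prop_compati}, a ${\mathcal{U}_{q}(\widehat{\mathfrak{sl}_{e}})}$-module, and that in both cases the Chevalley generators act by restriction of the operators defined on the ambient Fock space. First I would recall that $(\mathcal{L},\mathcal{G})$ is a crystal basis for $\mathcal{F}_{\mathbf{s},e}$ (Theorem~\ref{Th_BcF}), so in particular $\mathcal{L}$ is stable under the Kashiwara operators $\tilde e_i,\tilde f_i$ for $i\in\mathbb{Z}/e\mathbb{Z}$ and $\mathcal{G}$ is a basis of $\mathcal{L}/q\mathcal{L}$ permuted (with $0$) by these operators. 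I would then invoke the remark made just above the statement: for any ${\mathcal{U}_{q}(\widehat{\mathfrak{sl}_{e}})}$-submodule $M_e$ of $\mathcal{F}_{\mathbf{s},e}$, the Kashiwara operators attached to $M_e$ are the restrictions of those of $\mathcal{F}_{\mathbf{s},e}$, because the operators $\tilde e_i,\tilde f_i$ are intrinsic and do not depend on the chosen decomposition into ${\mathcal{U}}_{q}^{\prime}{(\widehat{\mathfrak{sl}_{e}})}_i$-components (\cite[\S4.2]{Kas}).

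Next I would check the two defining conditions of a crystal basis for the pair $(L_\infty(\mathbf{s}),B_\infty(\mathbf{s}))$ with $L_\infty(\mathbf{s})=\mathcal{L}\cap V_\infty(\mathbf{s})$ and $B_\infty(\mathbf{s})=L_\infty(\mathbf{s})/qL_\infty(\mathbf{s})$, now with respect to the ${\mathcal{U}_{q}(\widehat{\mathfrak{sl}_{e}})}$-structure. First, $L_\infty(\mathbf{s})$ is a free $\mathbb{A}(q)$-lattice of $V_\infty(\mathbf{s})$ with $\mathbb{Q}(q)\otimes L_\infty(\mathbf{s})=V_\infty(\mathbf{s})$: this is exactly the content of the fact, already recorded in the excerpt, that $(L_\infty(\mathbf{s}),B_\infty(\mathbf{s}))$ is a ${\mathcal{U}_{q}(\mathfrak{sl}_{\infty})}$-crystal basis of $V_\infty(\mathbf{s})$, and the lattice/quotient data are independent of which quantum group acts. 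Second, I must verify stability of $L_\infty(\mathbf{s})$ under $\tilde e_i,\tilde f_i$ for $i\in\mathbb{Z}/e\mathbb{Z}$ and that $\tilde e_i,\tilde f_i$ induce operators on $B_\infty(\mathbf{s})$ with $\tilde f_i b = b' \iff \tilde e_i b' = b$. Since $V_\infty(\mathbf{s})$ is ${\mathcal{U}_{q}(\widehat{\mathfrak{sl}_{e}})}$-stable inside $\mathcal{F}_{\mathbf{s},e}$ and the ${\mathcal{U}_{q}(\widehat{\mathfrak{sl}_{e}})}$-Kashiwara operators on $V_\infty(\mathbf{s})$ are the restrictions of those on $\mathcal{F}_{\mathbf{s},e}$, stability of $\mathcal{L}$ under $\tilde e_i,\tilde f_i$ immediately gives stability of $L_\infty(\mathbf{s})=\mathcal{L}\cap V_\infty(\mathbf{s})$. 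The induced operators on $B_\infty(\mathbf{s})=\mathcal{G}\cap V_\infty(\mathbf{s})$ are then the restrictions of the induced operators on $\mathcal{G}$, so the $\tilde f_i/\tilde e_i$ compatibility on $B_\infty(\mathbf{s})$ follows from the corresponding statement on $\mathcal{G}$, which holds by Theorem~\ref{Th_BcF}.

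There is one subtlety worth spelling out, which I expect to be the main obstacle: one must be sure that $B_\infty(\mathbf{s})$ really equals $\mathcal{G}\cap V_\infty(\mathbf{s})$, i.e. that passing to the quotient mod $q$ is compatible with intersecting with the submodule. Concretely, the natural map $L_\infty(\mathbf{s})/qL_\infty(\mathbf{s})\to \mathcal{L}/q\mathcal{L}$ is injective (because $qL_\infty(\mathbf{s})=q\mathcal{L}\cap L_\infty(\mathbf{s})$, as $q$ is a nonzerodivisor and $\mathcal{L}/L_\infty(\mathbf{s})$ is $q$-torsion-free — this uses that $V_\infty(\mathbf{s})$ is a direct summand of $\mathcal{F}_{\mathbf{s},e}$ as an $\mathbb{A}(q)$-module, which comes from the ${\mathcal{U}_{q}(\mathfrak{sl}_{\infty})}$-crystal basis decomposition), and its image is a subset of $\mathcal{G}$ stable under the Kashiwara operators. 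This identification is already implicit in the ${\mathcal{U}_{q}(\mathfrak{sl}_{\infty})}$-statement recalled above, so I would simply note that it transfers verbatim. Once this is in place, the pair $(L_\infty(\mathbf{s}),B_\infty(\mathbf{s}))$ satisfies Kashiwara's axioms for a ${\mathcal{U}_{q}(\widehat{\mathfrak{sl}_{e}})}$-crystal basis, completing the proof. In short, the whole argument is: the crystal basis data $(\mathcal{L},\mathcal{G})$ of $\mathcal{F}_{\mathbf{s},e}$ is intrinsic to the $\mathbb{Q}(q)$-vector space and its Kashiwara operators, the operators restrict to any ${\mathcal{U}_{q}(\widehat{\mathfrak{sl}_{e}})}$-submodule, and $V_\infty(\mathbf{s})$ is such a submodule, so its intersection with $(\mathcal{L},\mathcal{G})$ is a ${\mathcal{U}_{q}(\widehat{\mathfrak{sl}_{e}})}$-crystal basis.
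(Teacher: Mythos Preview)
Your proposal is correct and follows essentially the same route as the paper: both argue that the Kashiwara operators $\tilde e_i,\tilde f_i$ for $i\in\mathbb{Z}/e\mathbb{Z}$ stabilize $\mathcal{L}$ (by Theorem~\ref{Th_BcF}) and $V_\infty(\mathbf{s})$ (by Proposition~\ref{prop_compati} and the restriction remark), hence $L_\infty(\mathbf{s})=\mathcal{L}\cap V_\infty(\mathbf{s})$ and $B_\infty(\mathbf{s})$, with the $\tilde f_i/\tilde e_i$ compatibility inherited from $\mathcal{G}$. The paper additionally records the $P_e$-weight space decomposition of $(L_\infty(\mathbf{s}),B_\infty(\mathbf{s}))$ explicitly, while you are more careful than the paper about the identification $B_\infty(\mathbf{s})\hookrightarrow\mathcal{G}$; both are minor points and the arguments are otherwise the same.
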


\begin{proof}
Observe first that we have the weight spaces decompositions%
\[
L_{\infty}(\mathbf{s})=\bigoplus_{\mu\in P_{e}}\mathcal{L}_{\mu}\cap
V_{\infty}(\mathbf{s})\text{ and }B_{\infty}(\mathbf{s})=\bigoplus_{\mu\in
P_{e}}(\mathcal{L}_{\mu}/q\mathcal{L}_{\mu})\cap B_{\infty}(\mathbf{s})
\]
where $P_{e}$ is the weight space of the affine root system of type
$A_{e-1}^{(1)}$. By Theorem \ref{Th_BcF}, for any $i\in\mathbb{Z}/e\mathbb{Z}%
$, $\tilde{e}_{i}$ and $\tilde{f}_{i}$ stabilize $\mathcal{L}$. They also
stabilize the ${\mathcal{U}_{q}(\widehat{\mathfrak{sl}_{e}})}$-submodule
$V_{\infty}(\mathbf{s})$ by the previous discussion. Therefore, they stabilize
$L_{\infty}(\mathbf{s})$ and $B_{\infty}(\mathbf{s})$. Moreover, we have for
any $b_{1},b_{2}\in B_{\infty}(\mathbf{s}),$ $\tilde{f}_{i}(b_{1})=b_{2}$ if
and only if $\tilde{e}_{i}(b_{2})=b_{1}$ since this is true in $\mathcal{B}$.
This shows that the pair $(L_{\infty}(\mathbf{s}),B_{\infty}(\mathbf{s}))$
satisfies the general definition of a crystal basis for the ${\mathcal{U}%
_{q}(\widehat{\mathfrak{sl}_{e}})}$-module $V_{\infty}(\mathbf{s})$.
\end{proof}

Since $(L_{\infty}(\mathbf{s}),B_{\infty}(\mathbf{s}))$ is a crystal basis for
$V_{\infty}(\mathbf{s})$ regarded as a ${\mathcal{U}_{q}(\mathfrak{sl}%
_{\infty})}$-module, $B_{\infty}(\mathbf{s})$ has the structure of a
${\mathcal{U}_{q}(\mathfrak{sl}_{\infty})}$-crystal that we have denoted by
$\mathcal{G}_{\infty,\mathbf{s}}(\uemptyset)$.\ By the previous proposition,
$B_{\infty}(\mathbf{s})$ (which can be regarded as the set of vertices of
$\mathcal{G}_{\infty,\mathbf{s}}(\uemptyset)$) has also the structure of a
${\mathcal{U}}^{\prime}{_{q}(\widehat{\mathfrak{sl}_{e}})}$-crystal that we
denote by $\mathcal{G}_{\infty,\mathbf{s}}^{e}(\uemptyset)$. This crystal is
also a subcrystal of $\mathcal{G}_{e,\mathbf{s}}$ since the actions of the
Kashiwara operators on $\mathcal{G}_{\infty,\mathbf{s}}^{e}(\uemptyset)$ are
obtained by restriction from $\mathcal{G}_{e,\mathbf{s}}$. Let us now recall
the following result obtained in \cite[Theorem 4.2.2]{JL} which shows that
$\mathcal{G}_{e,\mathbf{s}}$ is in fact a subgraph of $\mathcal{G}%
_{\infty,\mathbf{s}}$

\begin{Prop}
Consider ${\boldsymbol{\lambda}}$ and ${\boldsymbol{\mu}}$ two $l$-partitions
such that there is an arrow ${\boldsymbol{\lambda}}\overset{i}{\rightarrow
}{\boldsymbol{\mu}}$ in $\mathcal{G}_{e,\mathbf{s}}$. Let $j\in\mathbb{Z}$ be
the content of the node ${\boldsymbol{\mu}}\setminus{\boldsymbol{\lambda}}$.
Then, we have the arrow ${\boldsymbol{\lambda}}\overset{j}{\rightarrow
}{\boldsymbol{\mu}}$ in $\mathcal{G}_{\infty,\mathbf{s}}$.
\end{Prop}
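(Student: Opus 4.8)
The statement to prove is that whenever $\boldsymbol{\lambda}\xrightarrow{i}\boldsymbol{\mu}$ is an arrow in $\mathcal{G}_{e,\mathbf{s}}$, and $j$ is the (integer) content of the added node $\boldsymbol{\mu}\setminus\boldsymbol{\lambda}$, then $\boldsymbol{\lambda}\xrightarrow{j}\boldsymbol{\mu}$ is an arrow in $\mathcal{G}_{\infty,\mathbf{s}}$. Since $\mathcal{G}_{\infty,\mathbf{s}}$ is the crystal for the $\mathcal{U}_q(\mathfrak{sl}_\infty)$-action, an arrow $\boldsymbol{\lambda}\xrightarrow{j}\boldsymbol{\mu}$ means precisely that $\boldsymbol{\mu}\setminus\boldsymbol{\lambda}$ is the \emph{good addable $j$-node} of $\boldsymbol{\lambda}$ in the $e=\infty$ combinatorics. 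The plan is therefore to compare the two pieces of data: the reduced $i$-word $\widetilde{w}_i(\boldsymbol{\lambda})$ computed with residues modulo the finite $e$, and the reduced $j$-word computed with residues modulo $\infty$ (i.e. with the content $j$ itself as ``residue''), and to show that the node added by the finite-$e$ arrow is also good for $j$ in the $\infty$-sense.

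\textbf{Key steps.} First I would invoke Lemma \ref{lem_util}, which expresses $\widetilde{w}_i(\boldsymbol{\lambda})$ in terms of the symbol $\mathfrak{B}(\boldsymbol{\lambda},\mathbf{s})$: the word $u_i$ is the concatenation $\prod_t u_{j_0+te}$ where each $u_j$ reads the entries equal to $j$ or $j+1$ from the rows of the symbol, top to bottom and right to left, with $j$ encoded by $A$ and $j+1$ by $R$. The crucial structural observation is that this $u_i$ is already a concatenation, ordered by increasing content $j\equiv i\pmod e$, of the ``local'' words $u_j$; and each $u_j$ is exactly the word one would use to compute the good addable/removable node \emph{of content $j$}, i.e. the reduced word $\widetilde{u}_j$ relevant for the $\mathfrak{sl}_\infty$-crystal at vertex $j$. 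So the $\infty$-reduced-$j$-word is literally one of the blocks appearing in the $e$-reduced-$i$-word. Next, the cancellation of $RA$ factors in forming $\widetilde{w}_i = A^p R^q$ from $u_i = \prod_t u_{j_0+te}$ can only \emph{remove} letters; moreover $RA$-cancellation across block boundaries can only pair an $R$ coming from a block $u_{j'}$ with an $A$ from a later block $u_{j''}$, $j''>j'$. The good addable $i$-node is the rightmost $A$ surviving in $\widetilde{w}_i$; I need to show that if this surviving $A$ sits in the block $u_j$, then it also survives the $RA$-cancellation performed \emph{within} $u_j$ alone, and is in fact the good ($=$ rightmost surviving) $A$ of $u_j$ — which is exactly the assertion that the added node is the good addable $j$-node for $\mathfrak{sl}_\infty$. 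This follows because any $R$ that would cancel this $A$ inside $u_j$ would, a fortiori, be available to cancel it inside the larger word $u_i$ (the $R$'s to the left of our $A$ within $u_j$ are a subset of the $R$'s to its left in $u_i$), contradicting its survival in $\widetilde{w}_i$. Conversely, letters of $u_j$ lying to the right of our $A$ that were cancelled in $u_i$ by later-block $A$'s: these are $R$'s, and cancelling them only makes it \emph{easier} for our $A$ to survive within $u_j$; so no harm. Assembling these monotonicity facts about the bracketing/cancellation process gives the result.

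\textbf{Main obstacle.} The delicate point is the bookkeeping of the $RA$-cancellation: one must be careful that the ``reading order'' used in the finite-$e$ $i$-word (blocks $u_{j_0}, u_{j_0+e}, u_{j_0+2e},\dots$ concatenated) is compatible with the ``reading order'' used for each $\infty$-$j$-word (just the single block $u_j$, internally ordered top-to-bottom, right-to-left), so that cancellation is genuinely \emph{hereditary}: a letter surviving cancellation in the big word survives cancellation in the sub-block, and the rightmost surviving $A$ is preserved. This is the standard ``bracket sequences are compatible under concatenation'' lemma for crystals (if $\widetilde{x}=A^pR^q$ is obtained from $x$ and $x = x^{(1)}\cdots x^{(m)}$, then the surviving $A$'s and $R$'s localize consistently), but here it must be applied with the precise translation between symbol-entries and nodes furnished by Lemma \ref{lem_util} and by the content formula $\mathrm{cont}(a,b,c)=b-a+s_c$. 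I expect essentially no new idea beyond this; the work is in the careful index-chasing on the symbol, and in checking the one genuinely new input, namely that an $R$ and an $A$ from \emph{different} residue classes mod $e$ never get cancelled against each other in the finite-$e$ word (which is immediate from the factorization \eqref{factorw_i}, since distinct blocks have distinct contents and the $RA$-cancellation only pairs adjacent $R$ then $A$ of the \emph{same} content-pair $(j,j+1)$ after reduction — wait, across blocks one has $R$'s of content $j'+1$ followed by $A$'s of content $j''$; these cancel only when $j'+1$ and $j''$ are consecutive in the $i$-word ordering, which is automatic). Carrying this through cleanly is the whole content of the proof.
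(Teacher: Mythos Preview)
The paper does not give a proof of this proposition: it is quoted from \cite[Theorem 4.2.2]{JL}. So there is no in-paper argument to compare against, and your sketch has to be judged on its own terms.

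Your strategy is correct. The factorization $w_i = \prod_{t} w_{j_0+te}$ in \eqref{factorw_i} exhibits the finite-$e$ $i$-word as the concatenation, in the order given by $\prec_{\mathbf{s}}$, of the $\mathfrak{sl}_\infty$ $j$-words over all $j\equiv i\pmod e$. The result then reduces to a standard hereditary property of $RA$-cancellation (equivalently, bracket matching): if $u = v_1 \cdots v_m$ and the rightmost surviving $A$ in $u$ lies in the block $v_k$, then it is also the rightmost surviving $A$ of $v_k$ taken alone. You argue both halves of this in your ``Key steps'' paragraph. The cleanest justification is via the height function $h(p)=\#\{R\text{'s in }u_1\cdots u_p\}-\#\{A\text{'s in }u_1\cdots u_p\}$: an $A$ at position $p$ survives iff $h(p)$ is a strict running minimum, and this condition is inherited by the sub-block $v_k$ (since the height function on $v_k$ differs from that on $u$ restricted to $v_k$ by an additive constant), while any strictly later surviving $A$ inside $v_k$ would, by the same translation, also survive in $u$ and contradict maximality of $p$. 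This is exactly the signature rule underlying the tensor-product crystal formula, applied to $\widetilde f$.

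Your ``Main obstacle'' paragraph, however, is muddled and should be rewritten or dropped. Cross-block $RA$-cancellations \emph{do} occur (an $R$ from $u_{j'}$ can certainly cancel an $A$ from a later block $u_{j''}$), and this is not an obstruction: the hereditary property above absorbs it with no extra hypothesis. The sentence about ``an $R$ and an $A$ from different residue classes mod $e$ never get cancelled'' is meaningless as stated (every letter in $u_i$ has residue $i$), and the parenthetical self-correction that follows does not repair it. The only genuine compatibility to check is that the concatenation order of the blocks agrees with the internal order $\prec_{\mathbf{s}}$ on nodes; this is immediate from the definition of $\prec_{\mathbf{s}}$ and is already the content of \eqref{factorw_i}.
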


By combining the two previous propositions, we thus obtain the following corollary.

\begin{Cor}
\label{Cor_branching}The ${\mathcal{U}_{q}(\mathfrak{sl}_{e})}$-crystal
$\mathcal{G}_{\infty,\mathbf{s}}^{e}(\uemptyset)$ is a subgraph of the
${\mathcal{U}_{q}(\mathfrak{sl}_{\infty})}$-crystal $\mathcal{G}%
_{\infty,\mathbf{s}}(\uemptyset)$. It decomposes into ${\mathcal{U}%
_{q}(\widehat{\mathfrak{sl}_{e}})}$-connected components. This decomposition
gives the decomposition of $V_{\infty}(\mathbf{s})$ into its irreducible
${\mathcal{U}_{q}(\widehat{\mathfrak{sl}_{e}})}$-components.
\end{Cor}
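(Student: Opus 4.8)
\textbf{Proof plan for Corollary \ref{Cor_branching}.}
The plan is to assemble the statement from the two propositions that immediately precede it, together with the general principle that a crystal basis decomposes an integrable module into irreducibles. First I would invoke the last Proposition: every arrow ${\boldsymbol{\lambda}}\overset{i}{\rightarrow}{\boldsymbol{\mu}}$ of $\mathcal{G}_{e,\mathbf{s}}$ is, after relabelling $i$ by the content $j$ of the added node, an arrow of $\mathcal{G}_{\infty,\mathbf{s}}$; restricting this inclusion of graphs to the vertex set $B_{\infty}(\mathbf{s})$ of $\mathcal{G}_{\infty,\mathbf{s}}(\uemptyset)$ shows that $\mathcal{G}_{\infty,\mathbf{s}}^{e}(\uemptyset)$, whose arrows are by definition obtained by restricting the $\widetilde{e}_{i},\widetilde{f}_{i}$ of $\mathcal{G}_{e,\mathbf{s}}$ to $B_{\infty}(\mathbf{s})$, is a subgraph of $\mathcal{G}_{\infty,\mathbf{s}}(\uemptyset)$ once the affine labels are read as $\mathfrak{sl}_\infty$ labels via the content. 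This gives the first assertion.

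Next I would note that $\mathcal{G}_{\infty,\mathbf{s}}^{e}(\uemptyset)$ is the crystal graph of a genuine ${\mathcal{U}}_{q}^{\prime}{(\widehat{\mathfrak{sl}_{e}})}$-module, namely $V_{\infty}(\mathbf{s})$ with the module structure furnished by Proposition \ref{prop_compati}, and that $(L_{\infty}(\mathbf{s}),B_{\infty}(\mathbf{s}))$ is a ${\mathcal{U}}_{q}^{\prime}{(\widehat{\mathfrak{sl}_{e}})}$-crystal basis for it by the Proposition proved just above. Since $V_{\infty}(\mathbf{s})$ is an integrable module in category $\mathcal{O}$ for ${\mathcal{U}}_{q}^{\prime}{(\widehat{\mathfrak{sl}_{e}})}$ (its weights, as computed from \eqref{wt_crystal}, lie in the appropriate dominant cone and each $\widetilde{e}_i,\widetilde{f}_i$ acts locally nilpotently on $B_{\infty}(\mathbf{s})$), it decomposes as a direct sum of irreducible highest weight modules $V_{e}(\umu)$; correspondingly $\mathcal{G}_{\infty,\mathbf{s}}^{e}(\uemptyset)$ decomposes into its ${\mathcal{U}}_{q}^{\prime}{(\widehat{\mathfrak{sl}_{e}})}$-connected components, each isomorphic to $\mathcal{G}_{e}(\mathrm{wt}({\boldsymbol{\lambda}},\mathbf{s})_{e})$ for a highest weight vertex ${\boldsymbol{\lambda}}$, exactly as recalled in Subsection \ref{act3}. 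Because the crystal basis $B_\infty(\mathbf{s})$ is the same set for both structures, passing to $B_{\infty}(\mathbf{s})/qL_{\infty}(\mathbf{s})$ transports the module decomposition $V_{\infty}(\mathbf{s})=\bigoplus V_{e}(\umu)$ faithfully onto the crystal decomposition, so that counting connected components with their highest weights recovers the branching multiplicities of $\mathrm{Res}\,V_{\infty}(\mathbf{s})$ to ${\mathcal{U}}_{q}^{\prime}{(\widehat{\mathfrak{sl}_{e}})}$.

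The one point that genuinely needs care — and the main obstacle — is the last sentence: that the \emph{crystal} decomposition computes the \emph{module} branching law. This is not automatic from abstract crystal theory because, as the introduction stresses, $\widehat{\mathfrak{sl}_{e}}$ is not a Levi (parabolic) subalgebra of $\mathfrak{sl}_\infty$, so one cannot simply restrict a crystal basis along a parabolic. The resolution is that here we are not restricting along a parabolic but using the \emph{a priori} compatibility of the two actions on the ambient Fock space (Proposition \ref{prop_compati}) together with the fact, just established, that $(L_{\infty}(\mathbf{s}),B_{\infty}(\mathbf{s}))$ is simultaneously a crystal basis for both the $\mathfrak{sl}_\infty$- and the $\widehat{\mathfrak{sl}_{e}}$-structures on the \emph{same} module $V_{\infty}(\mathbf{s})$. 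Given a crystal basis of an integrable module, the number of connected components of the crystal graph with a given highest weight equals the multiplicity of the corresponding irreducible in the module (standard crystal basis theory, \cite{Kas}); applying this to the ${\mathcal{U}}_{q}^{\prime}{(\widehat{\mathfrak{sl}_{e}})}$-crystal basis $(L_{\infty}(\mathbf{s}),B_{\infty}(\mathbf{s}))$ of $V_{\infty}(\mathbf{s})$ yields precisely the branching coefficients. I would therefore organize the write-up as: (i) subgraph assertion via the preceding Proposition; (ii) $B_{\infty}(\mathbf{s})$ is an integrable ${\mathcal{U}}_{q}^{\prime}{(\widehat{\mathfrak{sl}_{e}})}$-crystal, hence splits into connected components; (iii) invoke the multiplicity–component correspondence for crystal bases to identify the splitting with the branching of $V_{\infty}(\mathbf{s})$.
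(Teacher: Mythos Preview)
Your proposal is correct and follows exactly the approach the paper intends: the corollary is stated immediately after the two propositions with the sentence ``By combining the two previous propositions, we thus obtain the following corollary,'' and your plan (i)--(iii) is precisely that combination, with the key point being that $(L_{\infty}(\mathbf{s}),B_{\infty}(\mathbf{s}))$ is a crystal basis for the ${\mathcal{U}}_{q}^{\prime}{(\widehat{\mathfrak{sl}_{e}})}$-module $V_{\infty}(\mathbf{s})$ so that standard crystal basis theory applies directly. Your discussion of why the non-parabolic issue is circumvented is a helpful elaboration of what the paper leaves implicit.
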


\subsection{Weights lattices}

Let $P_{e}$ and $P_{\infty}$ be the weight lattices of ${\mathcal{U}}%
_{q}^{\prime}{(\widehat{\mathfrak{sl}_{e}})}$ and ${\mathcal{U}_{q}%
(\mathfrak{sl}_{\infty})}$. We have a natural projection defined by%
\begin{equation}
\pi:\left\{
\begin{array}
[c]{l}%
P_{\infty}\rightarrow P_{e}\\
\Lambda_{j,\infty}\longmapsto\Lambda_{j\text{ mod}e,e}%
\end{array}
\right.
\end{equation}
Consider $\mathbf{s\in}\mathbb{Z}^{l}$ and ${\boldsymbol{\lambda}}$ an $l$-partition.

\begin{lemma}
We have $\mathrm{wt}({\boldsymbol{\lambda}},\mathbf{s})_{e}=\pi(\mathrm{wt}%
({\boldsymbol{\lambda}},\mathbf{s})_{\infty})$.
\end{lemma}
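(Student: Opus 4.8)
The plan is to compare the two weight formulas directly, term by term, using the residue map as the bridge. Recall from \S\ref{act2} that for a finite or infinite $e$,
\[
\mathrm{wt}({\boldsymbol{\lambda}},\mathbf{s})_{e}=\Lambda_{\mathbf{s},e}-\sum_{i}N_{i}({\boldsymbol{\lambda}},\mathbf{s})\,\alpha_{i,e},
\]
where for $e=\infty$ the sum runs over $i\in\mathbb{Z}$ and $N_{i}({\boldsymbol{\lambda}},\mathbf{s})$ counts the nodes of content exactly $i$, whereas for finite $e$ the sum runs over $i\in\mathbb{Z}/e\mathbb{Z}$ and $N_{i}$ counts nodes of content congruent to $i$ modulo $e$. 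The first observation is the evident additivity of these counts under the residue map: for each $i\in\mathbb{Z}/e\mathbb{Z}$,
\[
N_{i}({\boldsymbol{\lambda}},\mathbf{s})=\sum_{\substack{j\in\mathbb{Z}\\ j\equiv i\,(\mathrm{mod}\,e)}}N_{j}({\boldsymbol{\lambda}},\mathbf{s}),
\]
simply because every node has a well-defined content in $\mathbb{Z}$, and its residue is that content modulo $e$.

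Next I would check that $\pi$ behaves correctly on the two pieces of the weight formula. On fundamental weights this is the definition of $\pi$, so $\pi(\Lambda_{\mathbf{s},\infty})=\pi\bigl(\sum_{c}\Lambda_{s_{c},\infty}\bigr)=\sum_{c}\Lambda_{s_{c}\,\mathrm{mod}\,e,e}=\Lambda_{\mathbf{s},e}$. For the simple roots, applying $\pi$ to $\alpha_{j,\infty}=-\Lambda_{j-1,\infty}+2\Lambda_{j,\infty}-\Lambda_{j+1,\infty}$ gives $-\Lambda_{(j-1)\,\mathrm{mod}\,e,e}+2\Lambda_{j\,\mathrm{mod}\,e,e}-\Lambda_{(j+1)\,\mathrm{mod}\,e,e}=\alpha_{j\,\mathrm{mod}\,e,e}$, since indices of the affine fundamental weights and simple roots are read modulo $e$ by convention. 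Hence $\pi$ is a linear map sending each $\alpha_{j,\infty}$ to $\alpha_{j\,\mathrm{mod}\,e,e}$.

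Putting these together, apply $\pi$ to $\mathrm{wt}({\boldsymbol{\lambda}},\mathbf{s})_{\infty}$, use linearity, the two computations above, and then regroup the sum over $j\in\mathbb{Z}$ according to the residue class $j\bmod e$; the inner sums of the $N_{j}$ collapse to $N_{i}({\boldsymbol{\lambda}},\mathbf{s})$ by the additivity observation, yielding exactly $\mathrm{wt}({\boldsymbol{\lambda}},\mathbf{s})_{e}$. The only point requiring a word of care is the interchange of the (infinite) sum over $j$ with $\pi$: this is harmless because ${\boldsymbol{\lambda}}$ has finitely many nodes, so all but finitely many $N_{j}$ vanish and the sum is actually finite. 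I do not anticipate a genuine obstacle here; the statement is essentially a bookkeeping identity once one notes that the residue map $\pi$ is compatible with both the fundamental-weight part and the root-lattice part of the weight, and that node counts add up under passing from contents to residues.
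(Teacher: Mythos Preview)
Your proof is correct but takes a different route from the paper. You work directly with the root-lattice expression $\mathrm{wt}({\boldsymbol{\lambda}},\mathbf{s})_{e}=\Lambda_{\mathbf{s},e}-\sum_{i}N_{i}\alpha_{i,e}$, checking that $\pi$ sends $\Lambda_{\mathbf{s},\infty}\mapsto\Lambda_{\mathbf{s},e}$ and $\alpha_{j,\infty}\mapsto\alpha_{j\,\mathrm{mod}\,e,e}$, and that the node counts $N_{j}$ regroup correctly by residue class. The paper instead uses the crystal-theoretic expression $\mathrm{wt}({\boldsymbol{\lambda}},\mathbf{s})_{e}=\sum_{i}(\varphi_{i}-\varepsilon_{i})\Lambda_{i,e}$ from (\ref{wt_crystal}), observing that the coefficient of $\Lambda_{i,e}$ is the signed $A/R$ count in the word $u_{i}$, which in turn is the sum over $j\equiv i\pmod e$ of the corresponding signed counts for $e=\infty$. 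Your argument is more elementary in that it avoids invoking (\ref{wt_crystal}) and Kashiwara's theory altogether; the paper's argument, on the other hand, fits naturally with the symbol-and-word machinery developed in \S\ref{wordsy} and used throughout the rest of the paper. Both are short and valid.
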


\begin{proof}
By (\ref{wt_crystal}), for any $e\in\mathbb{Z}_{>1}$, the coordinate of
$\mathrm{wt}({\boldsymbol{\lambda}},\mathbf{s})_{e}$ on $\Lambda_{i,e}$ is
also equal to the number of letters $A$ in $u_{i}$ minus the number of letters
$R$. This is equal to the sum over the integer $j$ such that $j\equiv i(mode)$
of the number of letters $A$ in $u_{j}$ minus the number of letters $R$.\ The
coordinate of $\mathrm{wt}({\boldsymbol{\lambda}},\mathbf{s})_{e}$ on
$\Lambda_{i,e}$ is thus equal to the sum of the coordinates of $\mathrm{wt}%
({\boldsymbol{\lambda}},\mathbf{s})_{\infty}$ on the $\Lambda_{j,\infty}$ with
$j\equiv i(mode)$ as desired.
\end{proof}

\bigskip

One easily verifies that the kernel of $\pi$ is generated by the $\omega
_{k}:=\Lambda_{k+1,\infty}-\Lambda_{k-e,\infty},k\in\mathbb{Z}$.\ The weight
$\omega_{k}$ have level $0$. In fact level $0$ weights for ${\mathcal{U}%
_{q}(\mathfrak{sl}_{\infty})}$ are the $\mathbb{Z}$-linear combinations of the
elementary weights $\varepsilon_{j}=\Lambda_{j+1,\infty}-\Lambda_{j,\infty}$,
$j\in\mathbb{Z}$.\ The contribution of an entry $j\in\mathbb{Z}$ of
$\mathfrak{B}({\boldsymbol{\lambda}},\mathbf{s})$ to the weight $\mathrm{wt}%
({\boldsymbol{\lambda}},\mathbf{s})_{\infty}$ is exactly $\varepsilon_{j}$. We
also have $\omega_{k}=\varepsilon_{k}+\cdots+\varepsilon_{k-e+1}$.

\section{A combinatorial characterization of the highest weight vertices}

Our aim is now to give a combinatorial description of the highest weights
vertices of $\mathcal{G}_{e,\mathbf{s}}$, the crystal of the Fock space
$\mathcal{F}_{e,\mathbf{s}}$.\ Such a vertex is an $l$-partition without good
removable $i$-node for any $i\in\mathbb{Z}/e\mathbb{Z}$.

\subsection{Removing a period in a symbol}

\label{subsec_removeT}

Let ${\boldsymbol{\lambda}}$ be an $l$-partition. We define the $l$-partition
${\boldsymbol{\lambda}}^{-}$ and a multicharge $\mathbf{s}^{-}$ as follows:

\begin{itemize}
\item If ${\boldsymbol{\lambda}}$ is not $e$-periodic then
${\boldsymbol{\lambda}}^{-}:={\boldsymbol{\lambda}}$ and $\mathbf{s}%
^{-}:=\mathbf{s}$.

\item Otherwise, delete the elements of the $e$-period in $\mathfrak{B}%
({\boldsymbol{\lambda}},\mathbf{s})$. This gives a new symbol $\mathfrak{B}%
({\boldsymbol{\mu}},\mathbf{s}^{\prime})$ which is the symbol of an
$l$-partition associated with another multicharge $\mathbf{s}^{\prime}$. We
then set ${\boldsymbol{\lambda}}^{-}:={\boldsymbol{\mu}}$ and $\mathbf{s}%
^{-}:=\mathbf{s}^{\prime}$.
\end{itemize}

\begin{Prop}
\label{epr} Let ${\boldsymbol{\lambda}}$ be an $e$-periodic
multipartition.\ For any $i\in\mathbb{Z}/e\mathbb{Z}$, write $\widetilde
{u}_{i}$ and $\widetilde{u}_{i}^{-}$ for the reduced words obtained from the
symbols ${\boldsymbol{\lambda}}$ and ${\boldsymbol{\lambda}}^{-}$ as in
\S \ref{wordsy}.

\begin{enumerate}
\item $\widetilde{u}_{i}=\widetilde{u}_{i}^{-}$.

\item $\varphi_{i}({\boldsymbol{\lambda}}^{-})=\varphi_{i}%
({\boldsymbol{\lambda}})$ and $\varepsilon_{i}({\boldsymbol{\lambda}}%
^{-})=\varepsilon_{i}({\boldsymbol{\lambda}})$.
\end{enumerate}
\end{Prop}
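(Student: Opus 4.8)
The plan is to analyze, for each fixed residue $i\in\mathbb{Z}/e\mathbb{Z}$, how deletion of the $e$-period affects the word $u_i$ defined in \S\ref{wordsy}, and then to deduce both (1) and (2) at once, since $\varphi_i$ and $\varepsilon_i$ are read off from the reduced word $\widetilde u_i=A^pR^q$ via $\varphi_i=p$, $\varepsilon_i=q$. So (2) is an immediate consequence of (1), and the whole content is in proving $\widetilde u_i=\widetilde u_i^-$. First I would fix notation: let the $e$-period of $\mathfrak{B}({\boldsymbol{\lambda}},\mathbf{s})$ have form $(k,k-1,\ldots,k-e+1)$, occupying positions $(i_1,c_1),\ldots,(i_e,c_e)$ with $c_1\ge\cdots\ge c_e$ and $k=\max\mathfrak{B}({\boldsymbol{\lambda}},\mathbf{s})$. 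Passing to ${\boldsymbol{\lambda}}^-$ removes exactly one entry equal to $k-t+1$ from row $c_t$, for each $t=1,\ldots,e$, and (crucially) by condition (3) of Definition \ref{def3} the removed entry $k-t+1$ is the lowest one in the symbol with that value.

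The key observation is that the residues partition $\{k,k-1,\ldots,k-e+1\}$ into the full set of residue classes mod $e$: these are $e$ consecutive integers, so each residue $i$ is hit by exactly one value $k-t+1$ in the period. Fix $i$ and let $k-t+1\equiv i\pmod e$ be that value. The word $u_i$ is built from the pairs $(j,j+1)$ with $j\equiv i\pmod e$; the period only touches the single value $k-t+1$ (and, for the pair indexed by $j=k-t+1$, possibly also the slot $j+1=k-t+2$, but $k-t+2$ lies in the previous period value $k-(t-1)+1$ only when $t\ge 2$, and equals $k+1$ when $t=1$ — since $k$ is maximal there is no entry $k+1$, so that slot contributes nothing). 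Thus the only block of $u_i$ that changes is $u_{k-t+1}$, and within it only one row changes: row $c_t$ loses its entry $k-t+1$. I would then check, row by row, how the $j/(j+1)$-pattern of row $c$ in the block $u_{k-t+1}$ (one of $\emptyset$, $j$, $j+1$, or $(j+1)j$, encoded as $\emptyset,A,R,RA$) changes: rows $c\ne c_t$ are untouched; row $c_t$ had an entry $k-t+1=j$ (encoded $A$), and since by condition (2) there is no entry $k+1=j+1$ anywhere, the pattern in row $c_t$ was just $j$ (letter $A$), which disappears.

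So the net effect on $u_i$ is the deletion of a single letter $A$, namely the one coming from row $c_t$ of the block $u_{k-t+1}$; call its position $\pi_0$. It remains to see that deleting this $A$ does not change the reduced word $\widetilde u_i$ obtained by recursively cancelling factors $RA$. The hard part will be precisely this combinatorial cancellation argument, and I would handle it as follows. By condition (3) of Definition \ref{def3}, the removed entry $k-t+1$ lies strictly below every other entry $k-t+1$ in the symbol; since within the block $u_{k-t+1}$ rows are read top-to-bottom, the letter at $\pi_0$ is the \emph{last} $A$ (and last letter of any kind) contributed by value $k-t+1$ within $u_{k-t+1}$ — more precisely, everything in $u_i$ strictly after $\pi_0$ comes from blocks $u_{j}$ with $j>k-t+1$, hence (as $j+1\le k$, $j\le k-1$) consists only of values $\le k$; I claim in fact nothing lies after $\pi_0$ in the relevant sense, because: the entry removed is $k-t+1$ and is lowest, so in reading order right-to-left then top-to-bottom within $u_{k-t+1}$ it is the last slot of value $k-t+1$; and any later block $u_{k-t+1+e}$ is empty since its letters would be $\ge k-t+1+e>k$. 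Therefore $\pi_0$ is at the very end of $u_i$, the letter there is $A$, and in the $RA$-cancellation from the left an $A$ at the extreme right with no $R$ after it either cancels with an immediately preceding $R$ (in which case removing the pair $RA$ versus removing $A$ and leaving $R$ to be cancelled later gives the same $\widetilde u_i=A^pR^q$ up to the count — one must check the count: in $A^pR^q$ a trailing $A$ forces $q=0$, so removing it gives $A^{p-1}$; in the original, that trailing $A$ was cancelled, contributing nothing to $\widetilde u_i=A^pR^q$, so $p$ is unchanged... ) — here I would instead argue cleanly that a trailing $A$ never survives cancellation and its presence or absence cannot affect the survival of any other letter, since cancellation is confluent and a trailing $A$ can only ever be matched as the $A$ of some $RA$ with the $R$ to its left, whose survival status is determined by the prefix before it. Hence $\widetilde u_i^- = \widetilde u_i$, proving (1); and reading off $p,q$ gives (2). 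I expect the confluence/locality of the $RA$-cancellation, together with carefully pinning down that the deleted letter sits at the far right of $u_i$, to be the only delicate point; everything else is bookkeeping with the symbol.
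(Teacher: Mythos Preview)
Your accounting of which letters of $u_i$ are deleted is wrong, and this is a genuine gap. You conclude that ``the net effect on $u_i$ is the deletion of a single letter $A$,'' coming from the period entry $k-t+1$ in row $c_t$. But each block $u_j$ of $u_i$ records entries equal to $j$ (encoded $A$) \emph{and} $j+1$ (encoded $R$), so the period also contributes a letter to $u_i$ via its entry congruent to $i+1\pmod e$. Concretely: if $t\ge 2$ the period entry $k-t+2$ in row $c_{t-1}$ lies in the same block $u_{k-t+1}$ and is encoded $R$; if $t=1$ the period entry $k-e+1$ in row $c_e$ lies in block $u_{k-e}$ (as the value $(k-e)+1$) and is encoded $R$. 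In every case exactly one $R$ and one $A$ are removed from $u_i$. Your parenthetical does notice that $k-t+2$ is a period value when $t\ge 2$, but the next sentence writes ``there is no entry $k+1=j+1$,'' silently conflating $j+1=k-t+2$ with $k+1$, and the missing $R$ drops out of the argument.

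This is not a minor slip: under your premise that only a trailing $A$ is removed, assertion (1) would actually be \emph{false}. Deleting a single trailing $A$ from any word always changes the reduced form $A^pR^q$: either that $A$ was unmatched and $p$ drops by one, or it was matched with an $R$ to its left and that $R$ now survives, raising $q$ by one (e.g.\ $RARA$ reduces to $\emptyset$ but $RAR$ reduces to $R$). So your ``argue cleanly that a trailing $A$ never survives'' step cannot succeed. The correct argument, which is essentially the paper's, uses both deleted letters: conditions (2) and (3) of Definition~\ref{def3} force the deleted $R$ to precede the deleted $A$ in $u_i$, with every letter strictly between them encoded $A$; hence $u_i=u_i'\,R\,A^m\,A\,u_i''$ with the outer $R,A$ the period letters, one $RA$-cancellation yields $u_i'\,A^m\,u_i''=u_i^-$, and $\widetilde u_i=\widetilde u_i^-$ follows. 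Once you restore the missing $R$, your plan collapses to exactly this.
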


\begin{proof}
1: Write $(j_{a},\lambda_{j_{a}}^{c_{a}},c_{a}),$ $a=1,\ldots,e$ for the
$e$-period in $\mathfrak{B}({\boldsymbol{\lambda}},\mathbf{s})$. Recall we
have by convention $c_{1}\geq\cdots\geq c_{e}$.\ Consider $i\in\mathbb{Z}%
/e\mathbb{Z}$.\ Let $u_{i}$ be the word constructed in \S \ref{wordsy}. By
definition, there exists a unique $a\in\{1,\ldots,e\}$ such that
$\mathfrak{B}({\boldsymbol{\lambda}},\mathbf{s})_{j_{a}}^{c_{a}}\equiv
i(\text{mod }e)$. Assume first $a>1$. Write $x_{a-1}$ and $x_{a}$ for the
letters of $u_{i}$ associated to $(j_{a},\lambda_{j_{a-1}}^{c_{a-1}},c_{a-1})$
and $(j_{a},\lambda_{j_{a}}^{c_{a}},c_{a})$.\ We have $x_{a-1}=x_{a}+1$.
Set $u_{i}=u_{i}^{\prime}x_{a-1}vx_{a}u_{i}^{\prime\prime}$ where
$u_{i}^{\prime},v,u_{i}^{\prime\prime}$ are words with letters in $\mathbb{Z}%
$.\ By definition of the $e$-period, $v$ is empty or contains only letters
equal to $x_{a}$. Indeed, $x_{a-1}$ should be the rightmost occurrence of the
integer $x_{a-1}$ in $u_{i}$.\ Therefore the contribution of $x_{a-1}$ and
$x_{a}$ can be neglected in the computation of $\widetilde{u}_{i}$ since they
are encoded by symbols $R$ and $A$, respectively. Now assume $a=1$. Write
$y_{1}$ and $y_{e}$ the letters of $u_{i}$ associated with $(j_{1}%
,\lambda_{j_{1}}^{c_{1}},c_{1})$ and $(j_{e},\lambda_{j_{e}}^{c_{e}},c_{e}%
)$.\ We have $y_{e}=y_{1}-e+1$. By definition of $u_{i}$, we can write
$u_{i}=u_{i}^{\prime}y_{e}vy_{1}u_{i}^{\prime\prime}$ where $u_{i}^{\prime
},v,u_{i}^{\prime\prime}$ are words with letters in $\mathbb{Z}$. By
definition of the $e$-period, $v$ is empty or contains only letters $y_{1}$.
Indeed, $y_{e}$ should be the rightmost occurrence of the integer $y_{e}$ in
$u_{i}$.\ Therefore the contribution of $y_{e}$ and $y_{1}$ can be neglected
in the computation of $\widetilde{u}_{i}$ since they are encoded by symbols
$R$ and $A$, respectively. By the previous arguments, we see that the
contribution of the $e$-period in $u_{i}$ can be neglected when we compute
$\widetilde{u}_{i}$. This shows that $\widetilde{u}_{i}=\widetilde{u}%
_{i}^{\prime}$. Assertion 2 follows immediately from 1, (\ref{phi-epsi}) and
Lemma \ref{lem_util}.
\end{proof}

\subsection{The peeling procedure}

\label{subsec-peeling}Given ${\boldsymbol{\lambda}}$ an arbitrary
$l$-partition and $\mathbf{s}$ a multicharge, we define recursively the
$l$-partition ${\boldsymbol{\lambda}}^{\circ}$ and the multicharge
$\mathbf{s}^{\circ}$ as follows:

\begin{itemize}
\item If ${\boldsymbol{\lambda}}$ is not $e$-periodic, or
${\boldsymbol{\lambda}}$ is empty with $\mathbf{s\in}\mathcal{T}_{l,e},$ then
we set ${\boldsymbol{\lambda}}^{\circ}:={\boldsymbol{\lambda}}$ and
$\mathbf{s}^{\circ}:=\mathbf{s}$.

\item Otherwise we set ${\boldsymbol{\lambda}}^{\circ}:={({\boldsymbol{\lambda
}}^{-})}^{\circ}$ and $\mathbf{s}^{\circ}:={(\mathbf{s}^{-})}^{\circ}$.
\end{itemize}

\begin{Rem}
When $\lambda=\uemptyset$, we have $\mathbf{s}^{\circ}:=\mathbf{s}$ only if
$\mathbf{s\in}\mathcal{T}_{l,e}$.
\end{Rem}

\begin{lemma}
\label{Lem_WellDef}The previous procedure terminates, that is the pair
$({\boldsymbol{\lambda}}^{\circ},\mathbf{s}^{\circ})$ is well-defined.
Moreover we have $\mathbf{s}^{\circ}\in\mathcal{T}_{l,e}$ if
${\boldsymbol{\lambda}}^{\circ}=\uemptyset$.
\end{lemma}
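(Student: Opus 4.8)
The claim has two parts: termination of the peeling procedure, and the statement that $\mathbf{s}^\circ\in\mathcal{T}_{l,e}$ whenever ${\boldsymbol{\lambda}}^\circ=\uemptyset$. For the second part there is essentially nothing to do: by construction of the recursion, the base case ${\boldsymbol{\lambda}}^\circ=\uemptyset$ is only reached either when the procedure stops at an empty multipartition with multicharge already in $\mathcal{T}_{l,e}$, or when ${\boldsymbol{\lambda}}^-=\uemptyset$ feeds into a further call; but $\uemptyset$ is always $e$-periodic (Remark \ref{Rem_empt_e_period}), so the recursion on an empty multipartition continues deleting $e$-periods from $\mathfrak{B}(\uemptyset,\mathbf{s})$ exactly as in \S\ref{subsec_removeT}, i.e.\ it performs precisely the iteration $\mathbf{s}\mapsto\mathbf{s}'$ analysed in Lemma \ref{Lem_s(p)}, which terminates in $\mathcal{T}_{l,e}$. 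So once we know the procedure terminates and reaches $\uemptyset$, the final multicharge lies in $\mathcal{T}_{l,e}$ by Lemma \ref{Lem_s(p)}. The real content is termination.

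For termination, the plan is to exhibit a quantity attached to $({\boldsymbol{\lambda}},\mathbf{s})$ that strictly decreases (into a well-ordered set) under the single step $({\boldsymbol{\lambda}},\mathbf{s})\mapsto({\boldsymbol{\lambda}}^-,\mathbf{s}^-)$ whenever ${\boldsymbol{\lambda}}$ is nonempty and $e$-periodic. The natural candidate is the total rank $|{\boldsymbol{\lambda}}|=n$: deleting the $e$-period removes the $e$ entries $\mathfrak{B}({\boldsymbol{\lambda}},\mathbf{s})_{i_a}^{c_a}=k-a+1$ from the symbol and shifts the remaining larger-or-equal structure, and one must check that the resulting symbol $\mathfrak{B}({\boldsymbol{\mu}},\mathbf{s}')$ indeed corresponds to an $l$-partition ${\boldsymbol{\mu}}$ of strictly smaller rank — intuitively, an $e$-period in the symbol corresponds to a connected "horizontal $e$-strip" of nodes on the right rim of ${\boldsymbol{\lambda}}$, so removing it removes at least one box as long as ${\boldsymbol{\lambda}}\neq\uemptyset$ (when ${\boldsymbol{\lambda}}=\uemptyset$ the period has form $M,\dots,M-e+1$ with all parts $0$, and removing it changes only $\mathbf{s}$, not the rank). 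So I would argue: if ${\boldsymbol{\lambda}}\neq\uemptyset$ and is $e$-periodic, then $|{\boldsymbol{\lambda}}^-|<|{\boldsymbol{\lambda}}|$; hence after finitely many steps we reach a pair whose multipartition is either non-$e$-periodic (procedure stops) or empty, and in the latter case the tail of the procedure is the purely-multicharge iteration of Lemma \ref{Lem_s(p)}, which also terminates.

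To make the rank-decrease precise I would use the dictionary from \S\ref{def2}, second bullet: the entries $\mathfrak{B}({\boldsymbol{\lambda}},\mathbf{s})_j^c$ with $j\le j_c$ biject with right-rim nodes $(j,\lambda_j^c,c)$, and an entry equal to $-j+s_c+1$ corresponds to a zero part. The $e$-period consists of entries $k\ge k-1\ge\cdots\ge k-e+1$ which, because $k$ is the global maximum of the symbol, cannot all be of the "zero part" type unless ${\boldsymbol{\lambda}}=\uemptyset$; so at least one $\lambda_{i_a}^{c_a}>0$, and deleting that entry strictly decreases $\sum_{a,c}\lambda_a^c=|{\boldsymbol{\lambda}}|$ while the shift of the remaining entries does not increase the rank (it corresponds to sliding the deleted column out, which only removes boxes — this should be spelled out by comparing, row by row, the multiset of parts before and after deletion). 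The main obstacle is precisely this bookkeeping: verifying carefully that $\mathfrak{B}({\boldsymbol{\mu}},\mathbf{s}')$ is a bona fide shifted symbol of an $l$-partition with $|{\boldsymbol{\mu}}|<|{\boldsymbol{\lambda}}|$, i.e.\ that deletion of an $e$-period is a well-defined box-removing operation on multipartitions. Everything else — the well-foundedness argument and the reduction to Lemma \ref{Lem_s(p)} in the empty case — is then immediate.
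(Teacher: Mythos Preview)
Your overall strategy is exactly the paper's: show the rank drops while ${\boldsymbol{\lambda}}$ is nonempty, then feed the empty case into Lemma~\ref{Lem_s(p)}. The gap is in your key technical claim. You assert that when ${\boldsymbol{\lambda}}\neq\uemptyset$ is $e$-periodic the period ``cannot all be of the zero-part type'', hence $|{\boldsymbol{\lambda}}^-|<|{\boldsymbol{\lambda}}|$. This is false: the period is governed by the global maximum of the symbol, which may sit entirely in the tail of an empty component while another component is nonempty. For $l=e=2$, $\mathbf{s}=(0,3)$ and ${\boldsymbol{\lambda}}=((1),\emptyset)$ one has
\[
\mathfrak{B}({\boldsymbol{\lambda}},\mathbf{s})=\left(\begin{array}{rrrrr}\ldots & 0 & 1 & \mathbf{2} & \mathbf{3}\\ \ldots & -2 & -1 & 1 & \end{array}\right),
\]
and the $2$-period $(3,2)$ lies entirely in the top row, corresponding to parts $\lambda_1^{1}=\lambda_2^{1}=0$. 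Deleting it gives ${\boldsymbol{\lambda}}^-=((1),\emptyset)={\boldsymbol{\lambda}}$ with $\mathbf{s}^-=(0,1)$: the rank has not moved.

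The paper avoids this by stating only the conditional inequality $|{\boldsymbol{\lambda}}^-|<|{\boldsymbol{\lambda}}|$ \emph{when} ${\boldsymbol{\lambda}}^-\neq{\boldsymbol{\lambda}}$, and then concluding that iteration reaches either an aperiodic pair or $(\uemptyset,\mathbf{u})$. To salvage your argument along these lines you should first observe (using that, by condition~3 of Definition~\ref{def3}, the period entries in each row $c$ occupy the top positions $1,\ldots,m_c$) that $|{\boldsymbol{\lambda}}^-|\le|{\boldsymbol{\lambda}}|$, with equality precisely when every component touched by the period is empty; and second, treat the stationary case ${\boldsymbol{\lambda}}^-={\boldsymbol{\lambda}}\neq\uemptyset$ separately. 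In that case each step removes $e$ entries from empty components only, so $\sum_c s_c$ drops by $e$ while the largest entry coming from a nonempty component, say $\lambda_1^{c^\ast}+s_{c^\ast}$, is untouched; after finitely many steps this entry becomes the global maximum, forcing the next period to meet row $c^\ast$ and the rank to strictly drop. With this two-case analysis your termination argument goes through.
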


\begin{proof}
If ${{\boldsymbol{\lambda}}}$ is not empty and ${{\boldsymbol{\lambda}}%
^{-}\neq{\boldsymbol{\lambda}}}$, then $\left|  {{\boldsymbol{\lambda}}^{-}%
}\right|  <\left|  {{\boldsymbol{\lambda}}}\right|  $.\ So when we apply the
previous procedure to $({\boldsymbol{\lambda},}\mathbf{s})$, we obtain after a
finite number of steps an aperiodic pair $({{\boldsymbol{\lambda}}}^{\prime
}{{,\mathbf{s}}}^{\prime}{{)}}$ or a pair $(\emptyset,{{\mathbf{u}}})$. In the
first case, we have $({{\boldsymbol{\lambda}}}^{\prime}{{,\mathbf{s}}}%
^{\prime}{{)=}}({\boldsymbol{\lambda}}^{\circ},\mathbf{s}^{\circ})$ and the
procedure terminates. In the second case, we have already noticed in Remark
\ref{Rem_HW_infinite} that $(\emptyset,s^{\prime})$ admits an $e$-period. The
lemma then follows from Lemma \ref{Lem_s(p)}.
\end{proof}

\begin{Def}
The pair $\mathfrak{B}({\boldsymbol{\lambda}},\mathbf{s})$ is said to be
\emph{totally periodic} when ${\boldsymbol{\lambda}}^{\circ}=\uemptyset$ and
$\mathbf{s}^{\circ}\in\mathcal{T}_{l,e}$.
\end{Def}

\begin{Exa}
Here are a couple of examples.

\begin{enumerate}
\item First, assume that $e=3$, let $\mathbf{s}=(1,1)$ and let
${\boldsymbol{\lambda}}=(3.3,4.4.3)$. We have
\[
\mathfrak{B}({\boldsymbol{\lambda}},\mathbf{s})=\left(
\begin{array}
[c]{ccccc}%
\ldots & -2 & 2 & 4 & \mathbf{5}\\
\ldots & -2 & -1 & \mathbf{3} & \mathbf{4}%
\end{array}
\right)
\]
If we delete the $3$-period we obtain the symbol:
\[
\mathfrak{B}({\boldsymbol{\mu}},\mathbf{s}^{\prime})=\left(
\begin{array}
[c]{cccc}%
\ldots & -2 & 2 & 4\\
\ldots & -2 & -1 &
\end{array}
\right)
\]
which is the symbol of the bipartition ${\boldsymbol{\mu}}%
={\boldsymbol{\lambda}}^{-}=(\emptyset,4.3)$ with multicharge $\mathbf{s}%
^{-}=(-1,0)$. We don't have any $3$-period so ${\boldsymbol{\lambda}}^{\circ
}=(1,3.2)$ and $\mathbf{s}^{\circ}=(-1,0).$ Note that we have $(-1,0)\equiv
_{e}(0,2)$.

\item Now take $e=4$, let $\mathbf{s}=(4,5)$ and ${\boldsymbol{\lambda}%
}=(2.2.2.1.1,2)$. We obtain the following symbol
\[
\mathfrak{B}({\boldsymbol{\lambda}},\mathbf{s})=\left(
\begin{array}
[c]{cccccccc}%
\ldots & -1 & 0 & 1 & 2 & 3 & 4 & \mathbf{7}\\
\ldots & -1 & 1 & 2 & \mathbf{4} & \mathbf{5} & \mathbf{6} &
\end{array}
\right)
\]
By deleting the $4$-period, we obtain:
\[
\mathfrak{B}({\boldsymbol{\lambda}}^{-},\mathbf{s}^{-})=\left(
\begin{array}
[c]{ccccccc}%
\ldots & -1 & 0 & 1 & 2 & \mathbf{3} & \mathbf{4}\\
\ldots & -1 & \mathbf{1} & \mathbf{2} &  &  &
\end{array}
\right)
\]
Thus, we get ${\boldsymbol{\lambda}}^{-}=(1.1,\emptyset)$ and $\mathbf{s}%
^{-}=(1,4)$. Now deleting the $4$-period, we have:
\[
\mathfrak{B}(({\boldsymbol{\lambda}}^{-})^{-},(\mathbf{s}^{-})^{-})=\left(
\begin{array}
[c]{ccccc}%
\ldots & -1 & \mathbf{0} & \mathbf{1} & \mathbf{2}\\
\ldots & \mathbf{-1} &  &  &
\end{array}
\right)
\]
and we derive $({\boldsymbol{\lambda}}^{-})^{-}=(\emptyset,\emptyset)$ and
$(\mathbf{s}^{-})^{-}=(-1,2)$. Finally, we can delete the $4$-period
$2,1,0,-1$ in the last symbol, this gives%
\[
\mathfrak{B}({\boldsymbol{\lambda}}^{\circ},\mathbf{s}^{\circ})=\left(
\begin{array}
[c]{ccccc}%
\ldots & -1 &  &  & \\
\ldots & -1 &  &  &
\end{array}
\right)
\]
${\boldsymbol{\lambda}}^{\circ}=({\boldsymbol{\lambda}}^{-})^{-}%
=(\emptyset,\emptyset)$ and $\mathbf{s}^{\circ}=(\mathbf{s}^{-})^{-}=(-1,-1)$.
\end{enumerate}
\end{Exa}

\subsection{Crystal properties of periods}

\begin{Prop}
\label{eprprop1} Let $\mathbf{s}\in\mathbb{Z}^{l}$ and let
${\boldsymbol{\lambda}}\vdash_{l}n$. Then for $i\in\{0,1,\ldots,e-1\}$, we
have $\widetilde{e}_{i}({\boldsymbol{\lambda}})=0$ if and only if
$\widetilde{e}_{i}({\boldsymbol{\lambda}}^{-})=0$
\end{Prop}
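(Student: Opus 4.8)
The key observation is that $\widetilde{e}_i(\boldsymbol{\lambda})=0$ is equivalent to $\varepsilon_i(\boldsymbol{\lambda})=0$, which by \eqref{phi-epsi} means the reduced $i$-word $\widetilde{w}_i(\boldsymbol{\lambda})$ contains no letter $R$. So the plan is to reduce the statement entirely to a comparison of reduced $i$-words, and then invoke the results of \S\ref{subsec_removeT}.

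First I would treat the trivial case: if $\boldsymbol{\lambda}$ is not $e$-periodic, then by definition $\boldsymbol{\lambda}^-=\boldsymbol{\lambda}$ and $\mathbf{s}^-=\mathbf{s}$, so there is nothing to prove. Hence I may assume $\boldsymbol{\lambda}$ is $e$-periodic, so that Proposition~\ref{epr} applies. By Lemma~\ref{lem_util} we have $\widetilde{w}_i=\widetilde{u}_i$ and likewise $\widetilde{w}_i(\boldsymbol{\lambda}^-)=\widetilde{u}_i^-$ (computing the latter with respect to the multicharge $\mathbf{s}^-$). Proposition~\ref{epr}(1) gives $\widetilde{u}_i=\widetilde{u}_i^-$, hence $\widetilde{w}_i(\boldsymbol{\lambda})=\widetilde{w}_i(\boldsymbol{\lambda}^-)$. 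In particular the number of letters $R$ in each of these reduced words is the same, i.e. $\varepsilon_i(\boldsymbol{\lambda})=\varepsilon_i(\boldsymbol{\lambda}^-)$ — which is exactly Proposition~\ref{epr}(2).

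Now I would conclude by unwinding the definition of the Kashiwara operator: $\widetilde{e}_i(\boldsymbol{\lambda})=0$ precisely when $\boldsymbol{\lambda}$ has no good removable $i$-node, i.e. when $q=0$ in the factorization $\widetilde{w}_i(\boldsymbol{\lambda})=A^pR^q$, i.e. when $\varepsilon_i(\boldsymbol{\lambda})=0$ by \eqref{phi-epsi}. The same holds verbatim for $\boldsymbol{\lambda}^-$ with multicharge $\mathbf{s}^-$. Since we have just shown $\varepsilon_i(\boldsymbol{\lambda})=\varepsilon_i(\boldsymbol{\lambda}^-)$, the two conditions $\widetilde{e}_i(\boldsymbol{\lambda})=0$ and $\widetilde{e}_i(\boldsymbol{\lambda}^-)=0$ are equivalent, as desired.

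In truth there is very little to do here: the entire content is already packaged in Proposition~\ref{epr}, and this proposition is essentially a corollary. The only point requiring a word of care — the ``main obstacle,'' such as it is — is making sure that the reduced $i$-word of $\boldsymbol{\lambda}^-$ is genuinely computed relative to the correct multicharge $\mathbf{s}^-$ (the symbol $\mathfrak{B}(\boldsymbol{\lambda}^-,\mathbf{s}^-)$ being, by construction in \S\ref{subsec_removeT}, the symbol obtained from $\mathfrak{B}(\boldsymbol{\lambda},\mathbf{s})$ by deleting the $e$-period), so that the hypotheses of Proposition~\ref{epr} are literally in force; once that bookkeeping is acknowledged, the argument is immediate.
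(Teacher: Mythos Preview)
Your proof is correct and follows essentially the same approach as the paper: handle the trivial non-periodic case separately, then invoke Proposition~\ref{epr} to conclude that $\varepsilon_i(\boldsymbol{\lambda})=\varepsilon_i(\boldsymbol{\lambda}^-)$, which is equivalent to the statement. The paper's proof is more terse (and additionally singles out the case $\boldsymbol{\lambda}^-=\uemptyset$ as immediate, though this is already covered by Proposition~\ref{epr}), but the substance is identical.
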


\begin{proof}
If ${\boldsymbol{\lambda}}^{-}$ is ${\boldsymbol{\lambda}}$ or the empty
$l$-partition, the lemma is immediate. Otherwise it follows from Lemma
\ref{epr}.
\end{proof}

\begin{Prop}
\label{eprprop3} Let ${\boldsymbol{\lambda}}\vdash_{l}n$ be such that
${\boldsymbol{\lambda}}\neq\uemptyset$ and assume that $\widetilde{e}%
_{i}({\boldsymbol{\lambda}})=0$ for any $i\in\mathbb{Z}e/\mathbb{Z}$. Then
${\boldsymbol{\lambda}}$ admits an $e$-period.
\end{Prop}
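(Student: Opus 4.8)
The statement says: if $\boldsymbol{\lambda}\neq\uemptyset$ is a highest weight vertex for the $\widehat{\mathfrak{sl}}_e$-crystal (i.e.\ $\widetilde e_i(\boldsymbol{\lambda})=0$ for all $i\in\mathbb{Z}/e\mathbb{Z}$), then the symbol $\mathfrak B(\boldsymbol{\lambda},\mathbf s)$ admits an $e$-period. I plan to argue by contradiction: suppose $\boldsymbol{\lambda}$ is not $e$-periodic, and produce a good removable $i$-node for some $i$, contradicting $\varepsilon_i(\boldsymbol{\lambda})=0$. The natural candidate to contradict is the node corresponding to the largest entry $k$ of the symbol: since $\boldsymbol{\lambda}\neq\uemptyset$, the largest entry $k=\max_{i,c}\mathfrak B(\boldsymbol{\lambda},\mathbf s)_i^c$ is strictly larger than it would be for the empty symbol in at least one row, so there is a genuine removable node whose content is $k-1$ (the node $(j_1,\lambda_{j_1}^{c_1},c_1)$ with $\mathfrak B(\boldsymbol{\lambda},\mathbf s)_{j_1}^{c_1}=k$, chosen with $c_1$ maximal among entries equal to $k$). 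I would set $i\equiv k-1\pmod e$ and examine the reduced $i$-word $\widetilde u_i$ read off the symbol as in \S\ref{wordsy}.

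The key mechanism is the reading of the symbol described in \S\ref{word}–\S\ref{wordsy}: the reduced $i$-word is obtained from the entries equal to $k$ (encoded $R$) and $k-1$ (encoded $A$) — and more generally entries $j,j+1$ with $j\equiv i$ — by reading top-to-bottom, right-to-left and cancelling $RA$ factors. Since $k$ is the \emph{global} maximum of the symbol, no entry $k+1$ occurs, so the topmost block of $u_i$ contributing letters $j_0+te$ with $j_0+te=k-1$ consists purely of $R$'s (from entries $k$) and $A$'s (from entries $k-1$), with all the $R$'s appearing to the left of / above interleaved with $A$'s in a controlled way. The failure of $e$-periodicity must be exploited to show that at least one $R$ survives cancellation in $\widetilde u_i$ — equivalently $\varepsilon_i(\boldsymbol{\lambda})>0$. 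Concretely: if every $R$ (entry $k$) could be cancelled against an $A$ (entry $k-1$) lying weakly below it, we could read off a descending chain $k,k-1$ of the required shape; iterating down through residues $k-1,k-2,\dots,k-e+1$ and using condition (3) in Definition~\ref{def3} (no entry $k-t+1$ strictly below the chosen one) would reconstruct an $e$-period, contradiction. So some $R$ is uncancelled, giving a normal removable node, hence $\varepsilon_{k-1}(\boldsymbol{\lambda})\neq 0$.

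More carefully, I would phrase the induction as follows. Assuming $\boldsymbol{\lambda}$ has no good removable node, I will build the $e$-period entry by entry. Start with the rightmost entry equal to $k$ in the topmost row where $k$ occurs; call it $(i_1,c_1)$ — this is forced to satisfy condition (2) since $k$ is the max. Because $\varepsilon_{k-1}(\boldsymbol{\lambda})=0$, in the $(k-1)$-word $\widetilde u_{k-1}$ every $R$ cancels, so the $R$ coming from $(i_1,c_1)$ is killed by some $A$, i.e.\ an entry $k-1$ at position $(i_2,c_2)$; the top-to-bottom/right-to-left reading order together with the cancellation rule forces $c_2\le c_1$ and forces $(i_2,c_2)$ to be the lowest (in row order) occurrence of $k-1$ not already consumed — this gives conditions (1) and (3) for $t=2$. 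Now the entry $k-1$ at $(i_2,c_2)$, viewed inside $\widetilde u_{k-2}$, plays the role of an $R$; since $\varepsilon_{k-2}(\boldsymbol{\lambda})=0$ it must cancel against an $A$ from an entry $k-2$ at some $(i_3,c_3)$ with $c_3\le c_2$, etc. After $e$ steps we reach an entry $k-e+1$, and the accumulated data $(i_1,\lambda_{i_1}^{c_1},c_1),\dots,(i_e,\lambda_{i_e}^{c_e},c_e)$ is exactly an $e$-period by Definition~\ref{def3}. The main obstacle is bookkeeping: one must check that the cancellation in each $\widetilde u_j$ genuinely pairs the entry we are tracking with an entry in a weakly-lower row and that these pairings are mutually consistent (no entry gets reused, and the ``strictly below'' condition (3) is met) — this is where Lemma~\ref{lem_util} and the explicit description of $u_i$ via $u_j$-blocks in \S\ref{wordsy} do the real work, reducing everything to the combinatorics of $RA$-cancellation in a word read off consecutive diagonals of the symbol.
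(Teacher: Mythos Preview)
Your approach is essentially the paper's: construct the $e$-period entry by entry, using $\varepsilon_i(\boldsymbol{\lambda})=0$ at each step to locate an entry $k-t+1$ in a weakly lower row than the previous one. There is, however, a concrete error in your starting choice. You take $c_1$ \emph{maximal} (the topmost row containing the maximum $k$), but condition~(3) of Definition~\ref{def3} for $t=1$ says that no entry equal to $k$ may lie strictly below $(i_1,c_1)$, so $c_1$ must be \emph{minimal}. The paper chooses $c_1$ minimal, and this is what makes the induction run: with $c_1$ minimal, the $R$ contributed by that entry is the \emph{rightmost} $R$ in the word $u_i$ (for $i\equiv k-1\pmod e$), so its cancellation forces an $A$ --- hence an entry $k-1$ --- strictly to its right in the reading, necessarily in some row $c_2\le c_1$.

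A second, related issue: you write that the cancellation rule ``forces $(i_2,c_2)$ to be the lowest occurrence of $k-1$.'' It does not. The $RA$-reduction determines only the reduced word, not a canonical pairing of each $R$ with a specific $A$; the $R$ you are tracking is matched with the \emph{nearest} $A$ to its right, not the lowest one. The paper instead makes an \emph{explicit} choice of $c_2$ minimal (equivalently, the rightmost $A$ in $u_i$), which then guarantees that all entries in rows $c<c_2$ are below $M-1$, setting up the next step. With these two corrections --- choose the minimal row at every step, and do so by fiat rather than by appeal to a nonexistent canonical pairing --- your argument coincides with the paper's and is complete.
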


\begin{proof}
Consider $c_{1}$ minimal such that $\mathfrak{B}({\boldsymbol{\lambda}%
},\mathbf{s})_{1}^{c_{1}}=M$ is the largest entry of $\mathfrak{B}%
({\boldsymbol{\lambda}},\mathbf{s}).$ Let $i\in\mathbb{Z}/e\mathbb{Z}$ be such
that $M\equiv i+1$ $($mod $e)$.\ Then, in the encoding of the letters of
$u_{i}$ by symbols $A$ or $R$, the contribution of $\mathfrak{B}%
({\boldsymbol{\lambda}},\mathbf{s})_{1}^{c_{1}}$ is the rightmost symbol $R$
of $u_{i}$.\ Since $\widetilde{e}_{i}({\boldsymbol{\lambda}})=0$, there exists
in $u_{i}$ an entry $\mathfrak{B}({\boldsymbol{\lambda}},\mathbf{s})_{a_{2}%
}^{c_{2}}$ encoded by $A$ immediately to the right of $\mathfrak{B}%
({\boldsymbol{\lambda}},\mathbf{s})_{1}^{c_{1}}$ (to have a cancellation
$RA$).\ By maximality\ of $M$ and definition of $u_{i}$, we must have
$\mathfrak{B}({\boldsymbol{\lambda}},\mathbf{s})_{a_{2}}^{c_{2}}=M-1$ and
$c_{2}\leq c_{1}$. We can also choose $c_{2}$ minimal such that $\mathfrak{B}%
({\boldsymbol{\lambda}},\mathbf{s})_{a_{2}}^{c_{2}}=M-1$ (or equivalently, the
contribution of $\mathfrak{B}({\boldsymbol{\lambda}},\mathbf{s})_{a_{2}%
}^{c_{2}}$ is the rightmost $A$ in $u_{i}$). Then, the entries in any row $c$
with $c<c_{2}$ are less than $M-1$.\ If we use $\widetilde{e}_{i-1}%
({\boldsymbol{\lambda}})=0$, we obtain similarly an entry $\mathfrak{B}%
({\boldsymbol{\lambda}},\mathbf{s})_{a_{3}}^{c_{3}}$ with $\mathfrak{B}%
({\boldsymbol{\lambda}},\mathbf{s})_{a_{3}}^{c_{3}}=M-2$, $c_{3}\leq c_{2}$
such that the entries in any rows $c$ with $c<c_{3}$ are less than $M-2$. By
induction, this gives a sequence of entries $\mathfrak{B}({\boldsymbol{\lambda
}},\mathbf{s})_{a_{m}}^{c_{m}}=M-m+1$, for $m=1,\ldots,e$, $c_{1}\geq
\cdots\geq c_{e}$ and the entries in any row $c<c_{m}$ are less than $M-m+1$,
that is the desired $e$-period.
\end{proof}

\begin{Prop}
\label{weight} Let $\mathbf{s}\in\mathbb{Z}^{l}$ and let ${\boldsymbol{\lambda
}}\vdash_{l}n$ be such that ${\boldsymbol{\lambda}}\neq\uemptyset$. Assume
that $({\boldsymbol{\lambda}},\mathbf{s})$ admits an $e$-period of the form
$M,M-1,\ldots,M-e+1$. We have

\begin{enumerate}
\item $\mathrm{wt}({\boldsymbol{\lambda}},\mathbf{s})_{e}=\mathrm{wt}%
({\boldsymbol{\lambda}}^{-},\mathbf{s}^{-})_{e}$.

\item $\mathrm{wt}({\boldsymbol{\lambda}},\mathbf{s})_{\infty}=\mathrm{wt}%
({\boldsymbol{\lambda}}^{-},\mathbf{s}^{-})_{\infty}+\omega_{M}$.
\end{enumerate}
\end{Prop}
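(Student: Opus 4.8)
The plan is to compute both weights directly from the combinatorial formula $\mathrm{wt}(\boldsymbol{\lambda},\mathbf{s})_e=\sum_{i}(\varphi_i(\boldsymbol{\lambda})-\varepsilon_i(\boldsymbol{\lambda}))\Lambda_{i,e}$ for part (1), and from the entry-by-entry description of $\mathrm{wt}(\boldsymbol{\lambda},\mathbf{s})_\infty$ in terms of the $\varepsilon_j$ for part (2), and to see that deleting the $e$-period has the asserted effect on each.

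For part (1), I would invoke Proposition \ref{epr} directly: passing from $\boldsymbol{\lambda}$ to $\boldsymbol{\lambda}^-$ leaves $\varphi_i$ and $\varepsilon_i$ unchanged for every $i\in\mathbb{Z}/e\mathbb{Z}$ (this is assertion 2 of that proposition, which is exactly the case where $\boldsymbol{\lambda}$ is $e$-periodic). By the crystal expression (\ref{wt_crystal}) for the weight, $\mathrm{wt}(\boldsymbol{\lambda},\mathbf{s})_e$ is determined by the collection $(\varphi_i,\varepsilon_i)_{i\in\mathbb{Z}/e\mathbb{Z}}$, so the two weights agree. This step is essentially immediate once Proposition \ref{epr} is in hand.

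For part (2), I would use the fact recorded after the projection lemma in \S 4.2: the contribution of an entry $j\in\mathbb{Z}$ in the symbol $\mathfrak{B}(\boldsymbol{\lambda},\mathbf{s})$ to $\mathrm{wt}(\boldsymbol{\lambda},\mathbf{s})_\infty$ is exactly $\varepsilon_j=\Lambda_{j+1,\infty}-\Lambda_{j,\infty}$. Since $\mathfrak{B}(\boldsymbol{\lambda}^-,\mathbf{s}^-)$ is obtained from $\mathfrak{B}(\boldsymbol{\lambda},\mathbf{s})$ precisely by deleting the $e$ entries of the $e$-period, whose values are $M,M-1,\dots,M-e+1$, the difference of the two $\infty$-weights is the sum of the corresponding elementary weights, namely $\varepsilon_M+\varepsilon_{M-1}+\cdots+\varepsilon_{M-e+1}$. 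By the identity $\omega_k=\varepsilon_k+\cdots+\varepsilon_{k-e+1}$ also recorded there, this sum equals $\omega_M$, giving $\mathrm{wt}(\boldsymbol{\lambda},\mathbf{s})_\infty=\mathrm{wt}(\boldsymbol{\lambda}^-,\mathbf{s}^-)_\infty+\omega_M$.

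The only point requiring care — the main (mild) obstacle — is to make sure the ``contribution of an entry'' statement is being applied consistently: one must check that deleting the period really corresponds to removing exactly the multiset of entries $\{M,M-1,\dots,M-e+1\}$ from the symbol and nothing else, and that the remaining entries of $\mathfrak{B}(\boldsymbol{\lambda}^-,\mathbf{s}^-)$ are in value-preserving bijection with the non-period entries of $\mathfrak{B}(\boldsymbol{\lambda},\mathbf{s})$ (so that their total contribution is unchanged). This is exactly how $\boldsymbol{\lambda}^-$ and $\mathbf{s}^-$ were defined in \S \ref{subsec_removeT}, so it is a matter of unwinding that definition rather than a genuine difficulty; once this bookkeeping is made explicit, both parts follow. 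As a consistency check, note that part (1) is then also recoverable from part (2) by applying the projection $\pi$ and using that $\pi(\omega_M)=0$ since $\omega_M\in\ker\pi$.
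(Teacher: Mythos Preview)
Your proposal is correct and follows essentially the same route as the paper: for part (1) the paper also invokes assertion 2 of Proposition \ref{epr} together with (\ref{wt_crystal}), and for part (2) it likewise uses that each entry $j$ of the symbol contributes $\varepsilon_j$ to the $\infty$-weight and that $\varepsilon_M+\cdots+\varepsilon_{M-e+1}=\omega_M$. Your additional consistency check via $\pi(\omega_M)=0$ is not in the paper but is a sound observation.
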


\begin{proof}
1: Recall that for any $({\boldsymbol{\lambda}},\mathbf{s})$, we have by
\S \ref{act3}
\[
\mathrm{wt}({\boldsymbol{\lambda}},\mathbf{s})=\sum_{i\in\mathbb{Z}%
/e\mathbb{Z}}(\varphi_{i}({\boldsymbol{\lambda})-\varepsilon}_{i}%
({\boldsymbol{\lambda}))\Lambda}_{i}\text{.}%
\]
By assertion 2 of Proposition \ref{epr}, we have $\varepsilon_{i}%
({\boldsymbol{\lambda}})=\varepsilon_{i}({\boldsymbol{\lambda}}^{-})$ and
$\varphi_{i}({\boldsymbol{\lambda}})=\varphi_{i}({\boldsymbol{\lambda}}^{-})$
for all $i\in\mathbb{Z/}e\mathbb{Z}$. Therefore $\mathrm{wt}%
({\boldsymbol{\lambda}},\mathbf{s})_{e}=\mathrm{wt}({\boldsymbol{\lambda}}%
^{-},\mathbf{s}^{-})_{e}$.
%
Assertion 2 follows from the fact that the contribution to each entry
$j\in\mathbb{Z}$ in $\mathfrak{B}({\boldsymbol{\lambda}},\mathbf{s})$ to
$\mathrm{wt}({\boldsymbol{\lambda}},\mathbf{s})_{\infty}$ is $\varepsilon_{j}%
$. So
\[
\mathrm{wt}({\boldsymbol{\lambda}},\mathbf{s})_{\infty}=\mathrm{wt}%
({\boldsymbol{\lambda}}^{-},\mathbf{s}^{-})_{\infty}+\varepsilon_{M}%
+\cdots\varepsilon_{M-e+1}=\mathrm{wt}({\boldsymbol{\lambda}}^{-}%
,\mathbf{s}^{-})_{\infty}+\omega_{M}.
\]
\end{proof}

\subsection{A combinatorial description of the highest weight vertices}

\begin{Th}
\label{Th_HW}Let $\mathbf{s}\in\mathbb{Z}^{l}$ and let ${\boldsymbol{\lambda}%
}\vdash_{l}n$ then $({\boldsymbol{\lambda},\mathbf{s})}$ is a ${\mathcal{U}%
}_{q}^{\prime}{(\widehat{\mathfrak{sl}_{e}})}$-highest weight vertex if and
only if it is totally periodic.
\end{Th}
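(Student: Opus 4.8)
The plan is to prove the two implications separately, using the peeling procedure from \S\ref{subsec-peeling} as the main engine, together with the crystal-theoretic facts already assembled.

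\textbf{The ``only if'' direction.} Suppose $({\boldsymbol{\lambda}},\mathbf{s})$ is a ${\mathcal{U}}_{q}^{\prime}{(\widehat{\mathfrak{sl}_{e}})}$-highest weight vertex, i.e. $\widetilde{e}_{i}({\boldsymbol{\lambda}})=0$ for all $i\in\mathbb{Z}/e\mathbb{Z}$. The idea is to iterate Proposition~\ref{eprprop1}: the condition $\widetilde{e}_{i}({\boldsymbol{\lambda}})=0$ for all $i$ is equivalent to $\widetilde{e}_{i}({\boldsymbol{\lambda}}^{-})=0$ for all $i$, so the highest-weight property is preserved by each peeling step. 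By Lemma~\ref{Lem_WellDef} the procedure terminates in a pair $({\boldsymbol{\lambda}}^{\circ},\mathbf{s}^{\circ})$ which is either aperiodic with ${\boldsymbol{\lambda}}^{\circ}\neq\uemptyset$, or else has ${\boldsymbol{\lambda}}^{\circ}=\uemptyset$ and $\mathbf{s}^{\circ}\in\mathcal{T}_{l,e}$ (the latter is exactly ``totally periodic''). So I must rule out the first alternative: if ${\boldsymbol{\lambda}}^{\circ}\neq\uemptyset$ is still a highest weight vertex, then by Proposition~\ref{eprprop3} it admits an $e$-period, contradicting aperiodicity. Hence ${\boldsymbol{\lambda}}^{\circ}=\uemptyset$ and the pair is totally periodic. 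The one subtlety to check is that the base case in the peeling recursion correctly handles $\uemptyset$ with $\mathbf{s}\notin\mathcal{T}_{l,e}$ — here Remark~\ref{Rem_empt_e_period} guarantees a further $e$-period exists, so the recursion continues until $\mathbf{s}^{\circ}\in\mathcal{T}_{l,e}$, matching the definition.

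\textbf{The ``if'' direction.} Suppose $({\boldsymbol{\lambda}},\mathbf{s})$ is totally periodic, so repeated peeling reaches $(\uemptyset,\mathbf{t})$ with $\mathbf{t}\in\mathcal{T}_{l,e}$. I want to run the implication of Proposition~\ref{eprprop1} backwards: if ${\boldsymbol{\lambda}}^{-}$ is a highest weight vertex, so is ${\boldsymbol{\lambda}}$. Thus it suffices to show that the terminal pair $(\uemptyset,\mathbf{t})$ with $\mathbf{t}\in\mathcal{T}_{l,e}$ is a highest weight vertex — but this is clear since $\uemptyset$ is always a highest weight vector of the Fock space $\mathcal{F}_{\mathbf{t},e}$ (stated in \S\ref{act2}), equivalently $\widetilde{e}_{i}(\uemptyset)=0$ for all $i$, independent of $\mathbf{t}$. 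Then induction on $|{\boldsymbol{\lambda}}|$ (which strictly decreases under peeling as noted in the proof of Lemma~\ref{Lem_WellDef}) gives the result. Note the condition $\mathbf{t}\in\mathcal{T}_{l,e}$ is not actually needed for this direction — but it is needed for the statement to be a clean iff, since without it a nonempty $\uemptyset$-type residue could still be peeled; this is why the definition of ``totally periodic'' bakes it in.

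\textbf{Main obstacle.} The genuine content is entirely in Propositions~\ref{eprprop1} and~\ref{eprprop3}, which are already proved; given those, the theorem is a short bookkeeping argument about the peeling recursion. The one place to be careful is the interface between the ``remove a period'' step (\S\ref{subsec_removeT}, which can produce an empty $l$-partition with $\mathbf{s}\notin\mathcal{T}_{l,e}$) and the full peeling procedure (\S\ref{subsec-peeling}, which continues peeling empty-but-not-in-$\mathcal{T}_{l,e}$ pairs): I need to confirm that Proposition~\ref{eprprop1}'s equivalence still applies when ${\boldsymbol{\lambda}}^{-}=\uemptyset$ — which it does, trivially, since then both sides assert $\widetilde{e}_i(\uemptyset)=0$. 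So the proof is: (i) peeling preserves the highest-weight property in both directions by Proposition~\ref{eprprop1}; (ii) the terminal aperiodic-nonempty case is excluded by Proposition~\ref{eprprop3}; (iii) the terminal empty case is a highest weight vertex by construction of the Fock space; assemble by induction on rank.
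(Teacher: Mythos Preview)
Your proposal is correct and follows essentially the same approach as the paper: both directions are proved by iterating Proposition~\ref{eprprop1} along the peeling procedure, using Proposition~\ref{eprprop3} to rule out the aperiodic-nonempty terminal case in the ``only if'' direction, and using that $\uemptyset$ is always a highest weight vertex as the base case for the ``if'' direction. Your write-up is in fact a bit more careful than the paper's about the edge case where $\boldsymbol{\lambda}^{-}=\uemptyset$ with $\mathbf{s}^{-}\notin\mathcal{T}_{l,e}$, but the underlying argument is identical.
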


\begin{proof}
First assume that $({\boldsymbol{\lambda},\mathbf{s})}$ is totally periodic,
that is ${\boldsymbol{\lambda}}^{\circ}$ is the empty $l$-partition and
$\mathbf{s}^{\circ}\in\mathcal{T}_{l,e}$. An easy induction and Proposition
\ref{eprprop1} show that $({\boldsymbol{\lambda}},\mathbf{s})$ is a
${\mathcal{U}}^{\prime}{_{q}(\widehat{\mathfrak{sl}_{e}})}$-highest weight
vertex. In addition, the weight of $({\boldsymbol{\lambda}},\mathbf{s})$ is
equal to the weight of $({\boldsymbol{\lambda}}^{\circ},\mathbf{s}^{\circ})$
by Prop \ref{weight}. Conversely, if ${\boldsymbol{\lambda}}$ is a
${\mathcal{U}_q}^{\prime}(\widehat{\mathfrak{sl}_{e}})$-highest weight
vertex, we know by Prop \ref{eprprop3} that it admits a period and by
Proposition \ref{eprprop1} that ${\boldsymbol{\lambda}}^{-}$ is also a highest
weight vertex. Moreover, for any $\mathbf{s}\notin\mathcal{T}_{l,e}$, we have
seen in Remark \ref{Rem_empt_e_period} that $\mathfrak{B}(\emptyset
,\mathbf{s}^{\circ})$ contains an $e$-period. By Lemma \ref{Lem_WellDef}, this
implies that ${\boldsymbol{\lambda}}^{\circ}$ is empty with $\mathbf{s}%
^{\circ}\in\mathcal{T}_{l,e}$.
\end{proof}

\begin{Rem}
\ \label{Rem_HW_infinite}

\begin{enumerate}
\item We can obtain the highest weight vertices of $\mathcal{G}_{\mathbf{s}%
,\infty}$ by adapting the previous theorem.\ It suffices to interpret
$\mathcal{G}_{\mathbf{s},\infty}$ as the limit when $e$ tends to infinity of
the crystals $\mathcal{G}_{\mathbf{s},e}$.\ Then $({\boldsymbol{\lambda
},\mathbf{s})}$ is a highest weight vertex if and only if $\mathfrak{B}%
({\boldsymbol{\lambda}},\mathbf{s})$ is totally periodic for $e=\infty$.\ A
period for $e=\infty$ is defined as the natural limit of an $e$-period when
$e$ tends to infinity. This is an infinite sequence of the form
$M,M-1,M-2,\ldots$ in $\mathfrak{B}({\boldsymbol{\lambda}},\mathbf{s})$ where
$M$ is the maximal entry of $\mathfrak{B}({\boldsymbol{\lambda}},\mathbf{s})$.
We say that $\mathfrak{B}({\boldsymbol{\lambda}},\mathbf{s})$ is totally
periodic for $e=\infty$ when it reduces to the empty symbol after deletion of
its periods following the procedure described in \S \ \ref{subsec-peeling}. In
this case, since these periods are infinite, a row of the symbol disappears at
each deletion of a period. In particular, there are $l$ infinite periods.

\item Recall that a word $w$ with letters in $\mathbb{Z}$ is a reverse lattice
(or Yamanouchi) word if it can be decomposed into subwords of the form
$a(a-1)\cdots\mathrm{min}(w)$ where $\mathrm{min}(w)$ is the minimal letter of
$w$.\ Let $m$ be the maximal integer in $\mathfrak{B}({\boldsymbol{\lambda}%
},\mathbf{s})$ such that each row of $\mathfrak{B}({\boldsymbol{\lambda}%
},\mathbf{s})$ contains all the integer $k<m$.\ One easily verify that the
periodicity of $\mathfrak{B}({\boldsymbol{\lambda}},\mathbf{s})$ for
$e=\infty$ is equivalent to say that the word $w$ obtained by reading
successively the entries greater or equal to $m$ in the rows of $\mathfrak{B}%
({\boldsymbol{\lambda}},\mathbf{s})$ from left to right and top to bottom is a
reverse lattice word. Indeed, we always dispose in the symbol $\mathfrak{B}%
({\boldsymbol{\lambda}},\mathbf{s})$ of integers less than $m$ to complete any
decreasing sequence $a,a-1,\ldots,m$ into an infinite sequence. Observe that
this imposes in particular that $M\leq\sum_{c=0}^{l-1}(s_{c}-m+1)$.\ We will
see in \S \ \ref{subsec_dec_Fock_infinite} that this easily gives the
decomposition of $\mathcal{F}_{\mathbf{s},\infty}$into its ${\mathcal{U}%
_{q}(\mathfrak{sl}_{\infty})}$-irreducible components.
\end{enumerate}
\end{Rem}

\begin{Exa}
Take $e=4$, $l=3$, $\mathbf{s}=(3,4,6)$ and ${\boldsymbol{\lambda}}%
=(\emptyset,2.2,2.2.1.1.1.1)$. Consider the symbol
\[
\mathfrak{B}({\boldsymbol{\lambda}},\mathbf{s})=\left(
\begin{array}
[c]{cccccccccc}%
\ldots & -2 & -1 & 0 & 2 & 3 & 4 & 5 & \mathbf{7} & \mathbf{8}\\
\ldots & -2 & -1 & 0 & 1 & 2 & \mathbf{5} & \mathbf{6} &  & \\
\ldots & -2 & -1 & 0 & 1 & 2 & 3 &  &  &
\end{array}
\right)  .
\]
By deleting successively the $4$-periods (pictured in bold), we obtain%
\[
\mathfrak{B}({\boldsymbol{\lambda}}^{-},\mathbf{s}^{-})=\left(
\begin{array}
[c]{cccccccc}%
\ldots & -2 & -1 & 0 & 2 & 3 & \mathbf{4} & \mathbf{5}\\
\ldots & -2 & -1 & 0 & 1 & 2 &  & \\
\ldots & -2 & -1 & 0 & 1 & \mathbf{2} & \mathbf{3} &
\end{array}
\right)
\]%
\[
\left(
\begin{array}
[c]{cccccc}%
\ldots & -2 & -1 & 0 & 2 & \mathbf{3}\\
\ldots & -2 & -1 & 0 & 1 & \mathbf{2}\\
\ldots & -2 & -1 & \mathbf{0} & \mathbf{1} &
\end{array}
\right)  ,\left(
\begin{array}
[c]{ccccc}%
\ldots & -2 & -1 & 0 & \mathbf{2}\\
\ldots & -2 & -1 & \mathbf{0} & \mathbf{1}\\
\ldots & -2 & \mathbf{-1} &  &
\end{array}
\right)  ,\left(
\begin{array}
[c]{cccc}%
\ldots & -2 & -1 & 0\\
\ldots & -2 & -1 & \\
\ldots & -2 &  &
\end{array}
\right)  .
\]
Finally we obtain the empty $3$-partition and $\mathbf{s}^{\circ}%
=(-2,-1-0)\in\mathcal{T}_{3,4}$. So $( \boldsymbol{\lambda}, \mathbf{s})$ is a
highest weight vertex.
\end{Exa}

\section{Decomposition of the Fock space}

Consider $\mathbf{s}=(s_{0},\ldots,s_{l})\in\mathbb{Z}^{l}$. We can assume
without loss of generality that $\mathbf{s\in}\mathcal{T}_{l,\infty}$, that is
$s_{0}\leq\cdots\leq s_{l-1}$. The aim of this section is to provide the
decomposition of $\mathcal{G}_{\mathbf{s},e}$ into its connected
${\mathcal{U}}_{q}^{\prime}{(\widehat{\mathfrak{sl}_{e}})}$-components.\ The
multiplicity of an irreducible module in $\mathcal{F}_{\mathbf{s},e}$ can be
infinite. Nevertheless, we have a filtration of the highest weight vertices in
$\mathcal{G}_{\mathbf{s},\infty}$ by their ${\mathcal{U}_{q}(\mathfrak{sl}%
_{\infty})}$-weights.\ We are going to see that the number of totally periodic
symbols of fixed ${\mathcal{U}_{q}(\mathfrak{sl}_{\infty})}$-weight is finite
and can be counted by simple combinatorial objects. We proceed in two steps.
First, we give the decomposition of $\mathcal{G}_{\mathbf{s},\infty}$ into its
${\mathcal{U}_{q}(\mathfrak{sl}_{\infty})}$-connected components, next we give
the decomposition of each crystal $\mathcal{G}_{\infty}(\mathbf{v}%
),\mathbf{v\in}\mathcal{T}_{l,\infty}$ into its ${\mathcal{U}}_{q}^{\prime
}{(\widehat{\mathfrak{sl}_{e}})}$-connected components.

\subsection{Totally periodic tableaux}

Let $\mathbf{t\in}\mathcal{T}_{l,e}$ such that $t_{i}\leq s_{i}$ for any
$c=0,\ldots,l-1$.\ We denote by $\mathbf{s}\setminus\mathbf{t}$ the skew Young
diagram with rows of length $s_{c}-t_{c},c=0,\ldots,l-1$. By a \textit{skew
(semistandard) tableau} of shape $\mathbf{s}\setminus\mathbf{t}$, we mean a
filling $\tau$ of $\mathbf{s}\setminus\mathbf{t}$ by integers such that the
rows of $\tau$ strictly increase from left to right and its column weakly
increase from top to bottom. The weight of $\tau$ is the ${\mathcal{U}%
_{q}(\mathfrak{sl}_{\infty})}$-weight
\[
\mathrm{wt}(\tau)_{\infty}=\sum_{b\in\tau}\varepsilon_{c(b)}%
\]
of level $0$. Here $b$ runs over the boxes of $\mathbf{s}\setminus\mathbf{t}$
and $c(b)$ is the entry of the box $b$ in $\tau$. The trivial tableau of shape
$\mathbf{s}\setminus\mathbf{t}$ denoted $\tau_{\mathbf{s}\setminus\mathbf{t}}$
is the one in which the $c$-th row contains exactly the letters $t_{c}%
+1,\ldots,s_{c}$.

\noindent A tableau is a skew tableau of shape $\mathbf{s}\setminus\mathbf{t}$
where $\mathbf{t}$ is such that $t_{0}=\cdots=t_{l-1}$. In that case
$\lambda=\mathbf{s}\setminus\mathbf{t}$ is an ordinary Young diagram. Given a
level $0$ weight $\mu=\sum_{j\in\mathbb{Z}}\mu_{j}\varepsilon_{j}$ (where all
but a finite number of $\mu_{j}$ are equal to zero), we then denote by
$K_{\lambda,\mu}$ the Kostka number associated to $\lambda$ and $\mu$. Recall
that $K_{\lambda,\mu}$ is the number of tableaux of shape $\lambda$ and
${\mathcal{U}_{q}(\mathfrak{sl}_{\infty})}$-weight $\mu$.

\begin{Exa}
\label{exa_peri_tab}Take $e=2$, $\mathbf{s}=(2,3,6)$ and $\mathbf{t}=(0,0,1)$.
Then%
\[
\mathfrak{\tau}=\left(
\begin{array}
[c]{cccccc}
& 2 & 4 & 5 & 7 & 8\\
1 & 3 & 6 &  &  & \\
2 & 5 &  &  &  &
\end{array}
\right)
\]
is a tableau of shape $\mathbf{s}\setminus\mathbf{t}$ and weight $\mu
=\omega_{8}+\omega_{6}+\omega_{5}+\omega_{3}+\omega_{2}$.
\end{Exa}

The peeling procedure described in \S \ \ref{subsec-peeling} can be adapted to
the skew tableaux by successively removing their periods.\ For a skew tableau
$\tau$, denote by $\mathrm{w}(\tau)$ the word obtained by reading the entries
in the rows of $\tau$ from right to left and from top to bottom. When it
exists, the $e$-period of $\tau$ is the subword $u$ of $w(\tau)$ of the form
$u=u_{0}\cdots u_{e-1}$ where for any $k=0,\ldots,e-1$

\begin{itemize}
\item $u_{k}=M-k$ with $M$ the largest entry in $w(\tau),$

\item $u_{k}$ is the rightmost letter of $w(\tau)$ equal to $M-k.$
\end{itemize}

When $\tau$ is $e$-periodic, we write $\tau^{-}$ for the skew tableau obtained
by deleting its period. By condition on the rows and the columns of $\tau$,
$\tau^{-}$ is also a skew tableau. Its shape can be written on the form
$\mathbf{s}^{\prime}\setminus\mathbf{t}$ with $\mathbf{s}^{\prime}%
\in\mathcal{T}_{l,\infty}$.

\noindent More generally, given a skew tableau $\tau$ of shape $\mathbf{s}%
\setminus\mathbf{t}$, define the skew tableau $\tau^{{{}^{\circ}}}$ of shape
$\mathbf{s}^{\circ}\setminus\mathbf{t}$ as the result of the following peeling procedure:

\begin{itemize}
\item If $\tau$ is not periodic or $\tau=\tau_{\mathbf{s}\setminus\mathbf{t}}$
with $\mathbf{s\in}\mathcal{T}_{l,e}$, then $\tau^{\circ}=\tau$ and
$\mathbf{s}^{\circ}=\mathbf{s}$.

\item Otherwise, $\tau^{\circ}=(\tau^{\circ})^{\prime}$and $\mathbf{s}^{\circ
}=(\mathbf{s}^{\circ})^{\prime}$.
\end{itemize}

When $\tau^{\circ}=\uemptyset$ is the empty tableau, we have%
\[
\mathrm{wt}(\tau)_{\infty}=\sum_{T}\omega_{M(T)}%
\]
where $T$ runs over the $e$-periods of $\tau$ and for any period, $M(T)$ is
the largest integer in $T$. Write $\pi_{e}^{+}$ for the set of ${\mathcal{U}%
_{q}(\mathfrak{sl}_{\infty})}$-weights which are linear combinations of the
$\omega_{j},j\in\mathbb{Z}$ with nonnegative integer coefficients. When
$\tau^{\circ}=\uemptyset$, we have $\mathrm{wt}(\tau)_{\infty}\in\pi_{e}^{+}$.

\begin{Def}
\label{Def-tabloids}A \textit{totally periodic skew tableau} of shape
$\mathbf{s}\setminus\mathbf{t}$ is a skew tableau $\tau$ of shape
$\mathbf{s}\setminus\mathbf{t}$ such that

\begin{enumerate}
\item Each row $c=0,\ldots,l-1$ contains integers greater than $t_{c}.$

\item We have $\tau^{\circ}=\uemptyset$.
\end{enumerate}
\end{Def}

We denote by $\mathrm{Tab}_{\mathbf{s}\setminus\mathbf{t}}^{e}$ the set of
totally $e$-periodic skew tableaux of shape $\mathbf{s}\setminus\mathbf{t}$.
For any $\gamma\in\pi_{e}^{+}$, let $\mathrm{Tab}_{\mathbf{s}\setminus
\mathbf{t},\gamma}^{e}$ be the subset of $\mathrm{Tab}_{\mathbf{s}%
\setminus\mathbf{t}}^{e}$ of tableaux with ${\mathcal{U}_{q}(\mathfrak{sl}%
_{\infty})}$-weight $\gamma$.

\begin{Exa}
By applying the peeling procedure to the tableau $\tau$ of Example
\ref{exa_peri_tab}, we first obtain the sequence of tableaux
\[
\tau=\left(
\begin{array}
[c]{cccccc}
& 2 & 4 & 5 & 7 & 8\\
1 & 3 & 6 &  &  & \\
2 & 5 &  &  &  &
\end{array}
\right)  ,\tau^{(1)}=\left(
\begin{array}
[c]{cccc}
& 2 & 4 & 5\\
1 & 3 & 6 & \\
2 & 5 &  &
\end{array}
\right)  ,\tau^{(2)}=\left(
\begin{array}
[c]{cccc}
& 2 & 4 & 5\\
1 & 3 &  & \\
2 &  &  &
\end{array}
\right)  \text{ and }\tau^{(3)}=\left(
\begin{array}
[c]{cc}
& 2\\
1 & 3\\
2 &
\end{array}
\right)  .
\]
The tableau $\tau^{(3)}$ has shape $\mathbf{s}^{(3)}\setminus\mathbf{t}$ with
$\mathbf{t}=(0,0,1)$ and $\mathbf{s}^{(3)}=(1,2,2)$. Since $\tau^{(3)}\neq
\tau_{\mathbf{s}\setminus\mathbf{t}}$, the peeling procedure goes on.\ We
obtain%
\[
\tau^{(4)}=\left(
\begin{array}
[c]{cc}
& 2\\
1 & \\
&
\end{array}
\right)
\]
which has shape $\mathbf{s}^{(4)}\setminus\mathbf{t}$ with $\mathbf{s}%
^{(4)}=(0,1,2)$. Now $\mathbf{s}^{(4)}\notin\mathcal{T}_{l,e}$, so the
procedure finally yields $\tau^{(5)}=\tau^{\circ}=\uemptyset$. Therefore,
$\tau$ is totally $2$-periodic.
\end{Exa}

\subsection{Decomposition of $\mathcal{G}_{\mathbf{s},\infty}$%
\label{subsec_dec_Fock_infinite}}

In the sequel we assume $\mathbf{s\in}\mathcal{T}_{l,\infty}$ is fixed.\ By a
slight abuse of notation, we will identify each vertex $({\boldsymbol{\lambda
}},\mathbf{s})$ of $\mathcal{G}_{\mathbf{s},\infty}$ with its symbol
$\mathfrak{B}({\boldsymbol{\lambda}},\mathbf{s})$. For any $\mathbf{v\in
}\mathcal{T}_{l,\infty}$, let $\mathcal{H}_{\mathbf{s},\infty}^{\mathbf{v}}$
be the set of highest weight vertices in $\mathcal{G}_{\mathbf{s},\infty}$ of
highest weight $\Lambda_{\mathbf{v},\infty}$.

Consider $\mathfrak{B}({\boldsymbol{\lambda}},\mathbf{s})\in\mathcal{H}%
_{\mathbf{s},\infty}^{\mathbf{v}}$.\ For any fixed $k\in\mathbb{Z}$, the
contribution of all the integers $k$ in $\mathfrak{B}({\boldsymbol{\lambda}%
},\mathbf{s})$ to $\mathrm{wt}(\mathfrak{B}({\boldsymbol{\lambda}}%
,\mathbf{s}))_{\infty}$ is equal to $d_{k}\varepsilon_{k}$ where $d_{k}$ is
the number of occurrences of $k$ in $\mathfrak{B}({\boldsymbol{\lambda}%
},\mathbf{s})$. Each row contains at most a letter $k$, therefore $d_{k}\leq
l$ and $d_{k}=l$ if and only if $k$ appear in each row of $\mathfrak{B}%
({\boldsymbol{\lambda}},\mathbf{s})$. Since $\mathfrak{B}({\boldsymbol{\lambda
}},\mathbf{s})$ has weight $\Lambda_{\mathbf{v},\infty}$, we must have
$d_{k}=l$ for any $k\leq v_{0}$ and $d_{k}<l$ otherwise. This means that the
maximal integer $m$ such that $\mathfrak{B}({\boldsymbol{\lambda}}%
,\mathbf{s})$ contains each integer $k<m$ defined in Remark
\ref{Rem_HW_infinite} is equal to $v_{0}+1$. Let $\mathfrak{B}%
({\boldsymbol{\lambda}},\mathbf{s})_{\mathbf{v}}$ be the truncated symbol
obtained by deleting in $\mathfrak{B}({\boldsymbol{\lambda}},\mathbf{s})$ the
entries less or equal to $v_{0}$. By Remark \ref{Rem_HW_infinite} $(2)$, the
reading of $\mathfrak{B}({\boldsymbol{\lambda}},\mathbf{s})_{\mathbf{v}}$ is a
reverse lattice word.

\begin{Exa}
One verifies that
\[
\mathfrak{B}({\boldsymbol{\lambda}},\mathbf{s})=\left(
\begin{array}
[c]{cccccccccc}%
\ldots & -1 & 0 & 1 & 2 & 3 & 5 & 6 &  & \\
\ldots & -1 & 0 & 1 & 2 & 4 &  &  &  & \\
\ldots & -1 & 1 & 2 & 3 &  &  &  &  & \\
\ldots & -1 & 0 &  &  &  &  &  &  &
\end{array}
\right)
\]
with $\mathbf{s}=(0,2,3,5)$ is of highest weight $\Lambda_{\mathbf{v},\infty}$
with $\mathbf{v}=(-1,2,3,6)$. Then the reading of
\begin{equation}
\mathfrak{B}({\boldsymbol{\lambda}},\mathbf{s})_{\mathbf{v}}=\left(
\begin{array}
[c]{cccccccc}%
0 & 1 & 2 & 3 & 5 & 6 &  & \\
0 & 1 & 2 & 4 &  &  &  & \\
1 & 2 & 3 &  &  &  &  & \\
0 &  &  &  &  &  &  &
\end{array}
\right)  \label{Btrunc}%
\end{equation}
is the reverse lattice word
\[
w=65321042103210.
\]

\end{Exa}

Set $\mathbf{t}(\mathbf{v})=(v_{0},\ldots,v_{0})\in\mathbb{Z}^{l}$. Set
$\lambda=\mathbf{v}\setminus\mathbf{t}(\mathbf{v})$. Then $\lambda$ can be
regarded as an ordinary Young diagram. We define $\lambda^{\ast}$ has the
conjugate diagram of $\lambda$. We now associate to $\mathfrak{B}%
({\boldsymbol{\lambda}},\mathbf{s})_{\mathbf{v}}$ a tableau $T$ of shape
$\lambda(\mathbf{v})=\lambda^{\ast}$ and weight
\[
\mu(\mathbf{v})=\sum_{c=0}^{l-1}\mu_{c}\varepsilon_{c}%
\]
where for any $c=0,\ldots,l-1$, $\mu_{c}=s_{c}-v_{0}$ is the length of the
$c$-th row of $\mathfrak{B}({\boldsymbol{\lambda}},\mathbf{s})_{\mathbf{v}}$.
Observe that $\lambda^{\ast}$ is simply the sequence recording the number of
occurrences of each integer $k>v_{0}$ in $\mathfrak{B}({\boldsymbol{\lambda}%
},\mathbf{s})_{\mathbf{v}}$ (see the example below).\ Our procedure is a
variant of the one-to-one correspondence (reflecting the Schur duality)
described in \cite{NY} between the highest weight vertices of the
${\mathcal{U}_{q}(\mathfrak{sl}_{n})}$-Fock spaces and the semi-standard tableaux.

\noindent First normalize $\mathfrak{B}({\boldsymbol{\lambda}},\mathbf{s}%
)_{\mathbf{v}}$ by translating its entries by $-v_{0}$. Write $\mathfrak{B}%
({\boldsymbol{\lambda}},\mathbf{s})_{\mathbf{v}}^{t}$ for the resulting
truncated symbol. It has entries in $\mathbb{Z}_{>0}$ and its reading is a
reverse lattice word. Let $T^{(0)}$ be the tableau with one column containing
$\mu_{0}$ letters $1$. Assume the sequence of tableaux $T^{(0)},\ldots
,T^{(c-1)},$ $c<l-1$ is defined. Then $T^{(c)}$ is obtained by adding in
$T^{(c-1)}$ exactly $\mu_{c}$ letters $c+1$ at distance from the top row given
by the nonnegative integers appearing in the $c$-th row of $\mathfrak{B}%
({\boldsymbol{\lambda}},\mathbf{s})_{\mathbf{v}}^{t}$. Since the reading of
$\mathfrak{B}({\boldsymbol{\lambda}},\mathbf{s})_{\mathbf{v}}^{t}$ is a
reverse lattice word, $T^{(c)}$ is in fact a semi-standard tableau. We set
$T=T^{(l-1)}$.

\begin{Exa}
Let us compute $T$ for $\mathfrak{B}({\boldsymbol{\lambda}},\mathbf{s}%
)_{\mathbf{v}}$ as in (\ref{Btrunc}). We have $v_{0}=-1$%
\[
\mathfrak{B}({\boldsymbol{\lambda}},\mathbf{s})_{\mathbf{v}}^{t}=\left(
\begin{array}
[c]{cccccccc}%
1 & 2 & 3 & 4 & 6 & 7 &  & \\
1 & 2 & 3 & 5 &  &  &  & \\
2 & 3 & 4 &  &  &  &  & \\
1 &  &  &  &  &  &  &
\end{array}
\right)
\]
and we successively obtain for the tableaux $T^{(c)}$%
\[
T^{(0)}=\left(
\begin{array}
[c]{c}%
1
\end{array}
\right)  \text{, }T^{(1)}=\left(
\begin{array}
[c]{c}%
1\\
2\\
2\\
2
\end{array}
\right)  \text{, }T^{(2)}=\left(
\begin{array}
[c]{cc}%
1 & 3\\
2 & 3\\
2 & 3\\
2 & \\
3 &
\end{array}
\right)  \text{ and }T^{(3)}=\left(
\begin{array}
[c]{ccc}%
1 & 3 & 4\\
2 & 3 & 4\\
2 & 3 & 4\\
2 & 4 & \\
3 &  & \\
4 &  & \\
4 &  &
\end{array}
\right)  .
\]
We verify that $T^{(3)}=T$ has shape $\lambda(\mathbf{v})=(3,3,3,2,1,1,1)$ and
weight $\mu(\mathbf{v})=(1,3,4,6)$.
\end{Exa}

The previous procedure is reversible (for $\mathbf{s},\mathbf{v\in}%
\mathcal{T}_{l,\infty}$ fixed)$.\ $Starting from $T$ a tableau of shape
$\lambda(\mathbf{v})$ and weight $\mu(\mathbf{v})$, we can construct a
truncated symbol $\mathfrak{B}({\boldsymbol{\lambda}},\mathbf{s})_{\mathbf{v}%
}^{t},$ next $\mathfrak{B}({\boldsymbol{\lambda}},\mathbf{s})_{\mathbf{v}}$ by
translating the entries by $v_{0}$. This proves that the cardinality of
$\mathcal{H}_{\mathbf{s},\infty}^{\mathbf{v}}$ is finite and equal to
$K_{\lambda(\mathbf{v}),\mu(\mathbf{v})}$ the number of tableaux of shape
$\lambda(\mathbf{v})$ and weight $\mu(\mathbf{v})$.\ We thus obtain the
following theorem.

\begin{Th}
\label{Th_dec_Fock_inf}Consider $\mathbf{s\in}\mathcal{T}_{l,\infty}$. As a
${\mathcal{U}_{q}(\mathfrak{sl}_{\infty})}$-module, the Fock space
$\mathcal{F}_{\mathbf{s},\infty}$ decomposes as%
\[
\mathcal{F}_{\mathbf{s},\infty}=\bigoplus\limits_{\mathbf{v\in}\mathcal{T}%
_{l,\infty}}V_{\infty}(\mathbf{v})^{\oplus K_{\lambda(\mathbf{v}%
),\mu(\mathbf{v})}}.
\]

\end{Th}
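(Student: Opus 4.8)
The plan is to assemble the ingredients already set up in this section. First I would invoke crystal basis theory: by Theorem~\ref{Th_BcF} the pair $(\mathcal{L},\mathcal{G})$ is a crystal basis of $\mathcal{F}_{\mathbf{s},\infty}$, and since this module is integrable for $\mathcal{U}_q(\mathfrak{sl}_\infty)$ it decomposes as a direct sum of irreducible highest weight modules; each connected component of the crystal $\mathcal{G}_{\mathbf{s},\infty}$ is the crystal of an irreducible $V_\infty(\Lambda)$, where $\Lambda$ is the weight of its unique highest weight vertex. Such a $\Lambda$ is a level-$l$ dominant integral weight of $\mathfrak{sl}_\infty$, hence equals $\Lambda_{\mathbf{v},\infty}=\sum_c\Lambda_{v_c,\infty}$ for a unique $\mathbf{v}\in\mathcal{T}_{l,\infty}$ (list the indices of the fundamental weights in weakly increasing order). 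Consequently
\[
\mathcal{F}_{\mathbf{s},\infty}\;\cong\;\bigoplus_{\mathbf{v}\in\mathcal{T}_{l,\infty}}V_\infty(\mathbf{v})^{\oplus\, m_{\mathbf{v}}},\qquad m_{\mathbf{v}}=\bigl|\mathcal{H}_{\mathbf{s},\infty}^{\mathbf{v}}\bigr|,
\]
and the theorem reduces to the identity $m_{\mathbf{v}}=K_{\lambda(\mathbf{v}),\mu(\mathbf{v})}$.

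To obtain this identity I would use the combinatorial correspondence constructed just above. Given $\mathfrak{B}(\boldsymbol{\lambda},\mathbf{s})\in\mathcal{H}_{\mathbf{s},\infty}^{\mathbf{v}}$, the integer $m$ of Remark~\ref{Rem_HW_infinite} is $v_0+1$ (each value $\le v_0$ occurs $l$ times and each value $>v_0$ fewer times, by the shape of $\Lambda_{\mathbf{v},\infty}$), so the truncated symbol $\mathfrak{B}(\boldsymbol{\lambda},\mathbf{s})_{\mathbf{v}}$ of entries $>v_0$ has a reverse-lattice-word reading; after the shift by $-v_0$, the prescription $\mathfrak{B}(\boldsymbol{\lambda},\mathbf{s})\mapsto T=T^{(l-1)}$ yields a semistandard tableau of shape $\lambda(\mathbf{v})=\lambda^{\ast}$ and weight $\mu(\mathbf{v})$ with $\mu_c=s_c-v_0$. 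This map is reversible for $\mathbf{s},\mathbf{v}$ fixed: from a tableau $T$ of that shape and weight one reads off, for each $c$, the list of row-indices of the $\mu_c$ entries equal to $c+1$, which reconstitutes the $c$-th row of the translated truncated symbol; shifting back by $v_0$ and completing each row $c$ on the left by all integers $\le v_0$ recovers $\mathfrak{B}(\boldsymbol{\lambda},\mathbf{s})$, the reverse-lattice property guaranteeing that the result is again a highest weight vertex of weight $\Lambda_{\mathbf{v},\infty}$. Hence $m_{\mathbf{v}}=K_{\lambda(\mathbf{v}),\mu(\mathbf{v})}$, a finite Kostka number, and substituting into the display proves the theorem.

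The one genuinely structural input, as opposed to bookkeeping, is the identification of each connected component of $\mathcal{G}_{\mathbf{s},\infty}$ with the crystal of an irreducible $V_\infty(\mathbf{v})$: this rests on the integrability of the Fock space as a $\mathcal{U}_q(\mathfrak{sl}_\infty)$-module together with Kashiwara's theory, and I would simply state it with a precise reference rather than reprove it — this is the step I expect to need the most care to phrase correctly. The remaining points are routine: that the two maps above are mutually inverse (no value repeats in a reconstructed row, and semistandardness of $T$ is exactly the reverse-lattice condition, where one uses that below the truncation there are always enough smaller values to complete any decreasing run $a,a-1,\ldots,v_0+1$); that $\mu(\mathbf{v})$ is consistent with $\lambda(\mathbf{v})$, which follows from the conservation law $\sum_c v_c=\sum_c s_c$ (the linear form on $P_\infty$ with $\Lambda_{j,\infty}\mapsto j$ annihilates every simple root); and that values $\mathbf{v}$ with $K_{\lambda(\mathbf{v}),\mu(\mathbf{v})}=0$ contribute nothing, so the index set may be written as all of $\mathcal{T}_{l,\infty}$.
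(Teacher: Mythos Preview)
Your proposal is correct and follows essentially the same approach as the paper: the argument preceding the theorem statement in \S\ref{subsec_dec_Fock_infinite} is precisely the bijection $\mathfrak{B}(\boldsymbol{\lambda},\mathbf{s})\leftrightarrow T$ you describe, and the paper concludes with the same sentence that reversibility of this procedure gives $|\mathcal{H}_{\mathbf{s},\infty}^{\mathbf{v}}|=K_{\lambda(\mathbf{v}),\mu(\mathbf{v})}$. Your write-up is in fact more explicit than the paper about the crystal-theoretic reduction step (that connected components of $\mathcal{G}_{\mathbf{s},\infty}$ correspond to irreducible summands) and about the mutual inverseness of the two maps, but these are elaborations of the same proof, not a different route.
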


\subsection{Branching rule for the restriction of $V_{\infty}(\mathbf{s})$ to
${\mathcal{U}}_{q}^{\prime}{(\widehat{\mathfrak{sl}_{e}})}$}

Consider $\mathbf{s\in}\mathcal{T}_{l,\infty}$. We now give the decomposition
of $\mathcal{G}_{\infty,\mathbf{s}}(\uemptyset)$ into its ${\mathcal{U}}%
_{q}^{\prime}{(\widehat{\mathfrak{sl}_{e}})}$-connected components. By
Corollary \ref{Cor_branching}, this reflects the branching rule for the
restriction of $V_{\infty}(\mathbf{s})$ from ${\mathcal{U}_{q}(\mathfrak{sl}%
_{\infty})}$ to the ${\mathcal{U}}_{q}^{\prime}{(\widehat{\mathfrak{sl}_{e}}%
)}$ action.\ By our assumption we have $s_{0}\leq\cdots\leq s_{l-1}$. It is
then easy to describe the symbols associated with the $l$-partitions appearing
in $\mathcal{G}_{\infty,\mathbf{s}}(\uemptyset)$. Indeed, $\mathfrak{B}%
({\boldsymbol{\lambda}},\mathbf{s})\in\mathcal{G}_{\infty,\mathbf{s}%
}(\uemptyset)$ if and only if it is semistandard (see \cite{JL}). This means
that its columns weakly increase from top to bottom.

Assume that $\mathfrak{B}({\boldsymbol{\lambda}},\mathbf{s})$ is totally
periodic in $\mathcal{G}_{\infty,\mathbf{s}}(\uemptyset)$. Set $\mathbf{s}%
^{\circ}=(s_{0}^{\circ},\ldots,s_{l-1}^{\circ})\in\mathcal{T}_{l,e}$. We
define the level $l$-part of the symbol $\mathfrak{B}({\boldsymbol{\lambda}%
},\mathbf{s})$ as the symbol $\mathfrak{B}({\boldsymbol{\lambda}}%
,\mathbf{s})_{l}=\mathfrak{B}(\uemptyset,\mathbf{s}^{\circ})$ which can be
regarded as a subsymbol of $\mathfrak{B}({\boldsymbol{\lambda}},\mathbf{s})$
in a natural sense. The level $0$-part of $\mathfrak{B}({\boldsymbol{\lambda}%
},\mathbf{s})$ is then%
\[
\mathfrak{B}({\boldsymbol{\lambda}},\mathbf{s})_{0}=\mathfrak{B}%
({\boldsymbol{\lambda}},\mathbf{s})\setminus\mathfrak{B}(\uemptyset,\mathbf{s}%
^{\circ}).
\]
For any $\mathbf{t\in}\mathcal{T}_{l,e}$, we set%
\[
S_{\mathbf{t}}=\{\mathfrak{B}({\boldsymbol{\lambda}},\mathbf{s})\in
\mathcal{G}_{\infty,\mathbf{s}}(\uemptyset)\text{ totally periodic}%
\mid\mathbf{s}^{\circ}=\mathbf{t}\}.
\]
The following lemma is immediate from the definitions of the peeling
procedures on symbols and tableaux.

\begin{lemma}
\label{lem_Util}Fix $\mathbf{t\in}\mathcal{T}_{l,e}.$ The map $\psi
:\mathfrak{B}({\boldsymbol{\lambda}},\mathbf{s})\longmapsto\mathfrak{B}%
({\boldsymbol{\lambda}},\mathbf{s})_{0}$ is a one-to-one correspondence
between the sets $S_{\mathbf{t}}$ and $\mathrm{Tab}_{\mathbf{s}\setminus
\mathbf{t}}^{e}$. We have moreover%
\begin{equation}
\mathrm{wt}({\boldsymbol{\lambda}},\mathbf{s})_{\infty}=\Lambda_{\mathbf{t}%
,\infty}+\mathrm{wt}(\mathfrak{B}({\boldsymbol{\lambda}},\mathbf{s})_{0}).
\label{deco}%
\end{equation}

\end{lemma}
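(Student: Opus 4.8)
The plan is to make the identification of the symbol peeling of \S\ref{subsec-peeling} with the tableau peeling of this section completely explicit, after first pinning down the shape of an element of $S_{\mathbf{t}}$. Fix $\mathbf{t}\in\mathcal{T}_{l,e}$ and let $\mathfrak{B}(\boldsymbol{\lambda},\mathbf{s})\in S_{\mathbf{t}}$, so the peeling carries $\mathfrak{B}(\boldsymbol{\lambda},\mathbf{s})$ to $\mathfrak{B}(\uemptyset,\mathbf{t})$. An elementary peeling step only deletes entries, and deleting one entry from row $c$ lowers the $c$-th charge by $1$ while never removing a value from that row; running the peeling backwards from $\mathfrak{B}(\uemptyset,\mathbf{t})$ therefore shows that, for each $c$, row $c$ of $\mathfrak{B}(\boldsymbol{\lambda},\mathbf{s})$ contains every integer $\le t_c$ together with exactly $s_c-t_c$ integers $>t_c$. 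In particular $t_c\le s_c$, so $\mathbf{s}\setminus\mathbf{t}$ is a genuine skew diagram (both $\mathbf{s}$ and $\mathbf{t}$ weakly increase); $\mathfrak{B}(\uemptyset,\mathbf{t})=\mathfrak{B}(\boldsymbol{\lambda},\mathbf{s})_l$ is the bottom part of each row, and $\mathfrak{B}(\boldsymbol{\lambda},\mathbf{s})_0$ consists of $s_c-t_c$ strictly increasing integers $>t_c$ in row $c$. Since $\mathfrak{B}(\boldsymbol{\lambda},\mathbf{s})$ lies in $\mathcal{G}_{\infty,\mathbf{s}}(\uemptyset)$ it is semistandard, and one reads off at once that $\mathfrak{B}(\boldsymbol{\lambda},\mathbf{s})_0$ is a skew semistandard tableau of shape $\mathbf{s}\setminus\mathbf{t}$ whose $c$-th row has all entries $>t_c$. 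This produces the map $\psi$; conversely, stacking any such skew semistandard tableau on top of $\mathfrak{B}(\uemptyset,\mathbf{t})$ recovers a semistandard symbol of the form $\mathfrak{B}(\boldsymbol{\lambda},\mathbf{s})$, and for $\mathbf{t}$ fixed the two constructions are visibly mutually inverse.

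The substance of the lemma is that $\psi$ intertwines the two peeling procedures. I would prove, by induction on the number of periods removed, that for any symbol $\mathfrak{B}$ of the shape ``$\mathfrak{B}(\uemptyset,\mathbf{t})$ with a skew tableau $\tau$ stacked on top'' (where every cell of $\tau$ in row $c$ has value $>t_c$): (i) the reading word of $\mathfrak{B}$ in the sense of \S\ref{word}, restricted to the cells of $\tau$, is exactly $\mathrm{w}(\tau)$, the two reading conventions agreeing (rows right to left, then top to bottom); (ii) an $e$-period of $\mathfrak{B}$, if it exists, uses no cell of $\mathfrak{B}(\uemptyset,\mathbf{t})$ and coincides with an $e$-period of $\tau$, so that removing it commutes with $\psi$; (iii) $\mathfrak{B}=\mathfrak{B}(\uemptyset,\mathbf{u})$ with $\mathbf{u}\in\mathcal{T}_{l,e}$ exactly when $\tau=\tau_{\mathbf{u}\setminus\mathbf{t}}$ with $\mathbf{u}\in\mathcal{T}_{l,e}$, and $\mathfrak{B}$ is not $e$-periodic exactly when $\tau$ is not. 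The hard part will be (ii): that an $e$-period of ``core plus skew tableau'' never dips into the core. This is precisely where $\mathbf{t}\in\mathcal{T}_{l,e}$ enters — since $t_{l-1}-t_0\le e-1$, every cell of $\tau$ has value $>t_c\ge t_{l-1}-e+1$, hence $\ge t_{l-1}-e+2$, which forces the largest entry $M$ of any genuine $e$-period to satisfy $M\ge t_{l-1}+1$; then the letters $M,M-1,\dots,M-e+1$ of that period all occur in $\tau$, and since their occurrences in $\tau$ lie in strictly lower rows than the core occurrences of the same values, they are the rightmost ones in the reading word, so Definition \ref{def3} selects exactly the cells of the $e$-period of $\tau$. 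Granting (i)--(iii), the symbol peeling of $\mathfrak{B}(\boldsymbol{\lambda},\mathbf{s})$ and the tableau peeling of $\psi(\mathfrak{B}(\boldsymbol{\lambda},\mathbf{s}))$ run in lock-step (using Remark \ref{Rem_empt_e_period} and Lemma \ref{Lem_s(p)} to pass through the charges lying outside $\mathcal{T}_{l,e}$) and terminate simultaneously, so $\mathfrak{B}(\boldsymbol{\lambda},\mathbf{s})\in S_{\mathbf{t}}$ iff $\psi(\mathfrak{B}(\boldsymbol{\lambda},\mathbf{s}))^{\circ}=\uemptyset$, i.e. iff it lies in $\mathrm{Tab}^{e}_{\mathbf{s}\setminus\mathbf{t}}$. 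Hence $\psi$ is the claimed bijection.

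Finally, \eqref{deco} is bookkeeping. By the structure above, $\mathfrak{B}(\boldsymbol{\lambda},\mathbf{s})$ and $\mathfrak{B}(\uemptyset,\mathbf{t})$ carry the same entries apart from the (finitely many) cells of $\mathfrak{B}(\boldsymbol{\lambda},\mathbf{s})_0$; since the contribution of an entry $j$ to the $\mathcal{U}_q(\mathfrak{sl}_\infty)$-weight is $\varepsilon_j$ and $\mathrm{wt}(\uemptyset,\mathbf{t})_\infty=\Lambda_{\mathbf{t},\infty}$, this gives $\mathrm{wt}(\boldsymbol{\lambda},\mathbf{s})_\infty=\Lambda_{\mathbf{t},\infty}+\sum_{b\in\mathfrak{B}(\boldsymbol{\lambda},\mathbf{s})_0}\varepsilon_{c(b)}=\Lambda_{\mathbf{t},\infty}+\mathrm{wt}(\mathfrak{B}(\boldsymbol{\lambda},\mathbf{s})_0)$ by the definition of the weight of a skew tableau. (Equivalently, one iterates Proposition \ref{weight}(2) along the peeling, each period with largest entry $M$ contributing $\omega_M=\varepsilon_M+\cdots+\varepsilon_{M-e+1}$, which is the sum of the $\varepsilon_j$ over its $e$ cells.) Once step (ii) is secured, all remaining verifications are routine checks against the definitions.
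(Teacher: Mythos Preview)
Your overall plan—matching the symbol peeling of \S\ref{subsec-peeling} to the tableau peeling step by step—is exactly what the paper has in mind; the paper itself offers no argument beyond declaring the lemma ``immediate from the definitions of the peeling procedures on symbols and tableaux.'' Your first paragraph (the shape analysis) and the weight identity are correct.

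There is, however, a genuine gap in your key step (ii). As stated (``an $e$-period of $\mathfrak{B}$, if it exists, uses no cell of $\mathfrak{B}(\uemptyset,\mathbf{t})$''), it is false for a general pair of the form you describe. Take $l=e=2$, $\mathbf{t}=(0,1)\in\mathcal{T}_{2,2}$, and let $\tau$ be the single box with entry $2$ in row $1$ (shape $(0,2)\setminus(0,1)$). Then $\mathfrak{B}=\mathfrak{B}(\uemptyset,(0,2))$ has $2$-period $(2,1)$, and the entry $1$ is a core entry; meanwhile $\tau$ has no $2$-period at all, so (iii) fails too. Your bound ``$M\ge t_{l-1}+1$'' is deduced from the lower bound on the cells of $\tau$, i.e.\ from the assumption that the period already lies in $\tau$, so the argument is circular. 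The lemma is not in danger—this $\mathfrak{B}$ lies in $S_{(0,0)}$, not $S_{(0,1)}$, and $\tau\notin\mathrm{Tab}^{2}_{\mathbf{s}\setminus\mathbf{t}}$—but your inductive scheme, phrased for \emph{arbitrary} $\tau$, breaks.

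The fix is to argue the two directions asymmetrically. For $\mathfrak{B}\in S_{\mathbf{t}}$: since peeling only deletes entries and terminates at $\mathfrak{B}(\uemptyset,\mathbf{t})$, no step can delete an entry $\le t_c$ from row $c$ (such an entry would be missing from the terminal symbol); hence every period lies entirely in $\tau$, and removing it is visibly the tableau peeling of $\tau$, so $\psi(\mathfrak{B})\in\mathrm{Tab}^{e}_{\mathbf{s}\setminus\mathbf{t}}$. Conversely, if $\tau$ is $e$-periodic with maximum $M_\tau$, its smallest period entry $M_\tau-e+1$ lies in some row $c$ of $\tau$, so $M_\tau-e+1>t_c\ge t_0\ge t_{l-1}-(e-1)$, giving $M_\tau>t_{l-1}$; thus $M_\tau$ is also the maximum of $\mathfrak{B}$, and your own observation that any $\tau$-occurrence of a given value sits in a strictly lower row than any core occurrence shows that Definition~\ref{def3} selects exactly the $e$-period of $\tau$ inside $\mathfrak{B}$. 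With this adjustment the induction runs and the rest of your argument goes through.
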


\begin{Exa}
Take $e=2$, $\mathbf{s}=(2,3,6)$ and%
\[
\mathfrak{B}({\boldsymbol{\lambda}},\mathbf{s})=\left(
\begin{array}
[c]{ccccccccc}%
-2 & -1 & 0 & 1 & 2 & 4 & 5 & 7 & 8\\
-2 & -1 & 0 & 1 & 3 & 6 &  &  & \\
-2 & -1 & 0 & 2 & 5 &  &  &  &
\end{array}
\right)  .
\]
We obtain
\[
\mathfrak{B}({\boldsymbol{\lambda}},\mathbf{s})_{0}=\left(
\begin{array}
[c]{cccccc}
& 2 & 4 & 5 & 7 & 8\\
1 & 3 & 6 &  &  & \\
2 & 5 &  &  &  &
\end{array}
\right)  \in\mathrm{Tab}_{\mathbf{s}\setminus\mathbf{t}}^{2}%
\]
with $\mathbf{t}=(0,0,1)$. We have $\mathrm{wt}({\boldsymbol{\lambda}%
},\mathbf{s})_{\infty}=\Lambda_{\mathbf{t},\infty}+\omega_{8}+\omega
_{6}+\omega_{5}+\omega_{3}+\omega_{2}$.
\end{Exa}

Let $P_{e,\infty}$be the subset of $P_{\infty}$ of weights $\nu$ which can be
written on the form
\begin{equation}
\nu=\Lambda_{\mathbf{t}(\nu),\infty}+\gamma(\nu)\text{ with }\mathbf{t}%
(\nu)\mathbf{\in}\mathcal{T}_{l,e}\text{ and }\gamma(\nu)=\sum_{k>t_{0}%
(\nu)+e}a_{k}\omega_{k}\in\pi_{e}^{+}\label{dec(nu)}%
\end{equation}
where all but a finite number of the coefficients $a_{k}$ are equal to $0$.
Observe that the previous decomposition is then unique. Indeed, for any
$\mathbf{t\in}\mathcal{T}_{l,e}$ and any $k>t_{0}+e$, the weight
$\Lambda_{\mathbf{t},\infty}+\omega_{k}$ cannot be written on the form
$\Lambda_{\mathbf{t}^{\prime},\infty}$ with $\mathbf{t}^{\prime}\in
\mathcal{T}_{l,e}$. Let $\mathfrak{B}({\boldsymbol{\lambda}},\mathbf{s})$ a
highest weight vertex of $\mathcal{G}_{\mathbf{s},e}$ with weight $\nu$.

\begin{lemma}
\label{Lem_dec_can}The ${\mathcal{U}_{q}(\mathfrak{sl}_{\infty})}$-weight of
$\mathfrak{B}({\boldsymbol{\lambda}},\mathbf{s})$ belongs to $P_{e,\infty}$.
Moreover, we have%
\[
\mathbf{t}(\nu)=\mathbf{s}^{\circ}\text{ and }\gamma(\nu)=\mathrm{wt}%
(\mathfrak{B}({\boldsymbol{\lambda}},\mathbf{s})_{0})
\]
where $\mathbf{s}^{\circ}$ and $\mathfrak{B}({\boldsymbol{\lambda}}%
,\mathbf{s})_{0}$ are obtained by the peeling procedure as in (\ref{deco}).
\end{lemma}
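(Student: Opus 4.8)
The statement to prove is Lemma~\ref{Lem_dec_can}: for a ${\mathcal{U}}_{q}^{\prime}{(\widehat{\mathfrak{sl}_{e}})}$-highest weight vertex $\mathfrak{B}({\boldsymbol{\lambda}},\mathbf{s})$ of $\mathcal{G}_{\mathbf{s},e}$ with ${\mathcal{U}_{q}(\mathfrak{sl}_{\infty})}$-weight $\nu$, one has $\nu\in P_{e,\infty}$ with $\mathbf{t}(\nu)=\mathbf{s}^{\circ}$ and $\gamma(\nu)=\mathrm{wt}(\mathfrak{B}({\boldsymbol{\lambda}},\mathbf{s})_{0})$. The plan is to combine Theorem~\ref{Th_HW} (highest weight vertex $\Leftrightarrow$ totally periodic) with the iterated version of Proposition~\ref{weight}(2), then check the decomposition meets the constraints defining $P_{e,\infty}$ in \eqref{dec(nu)}, and finally invoke uniqueness of that decomposition to identify the two pieces.

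First I would apply Theorem~\ref{Th_HW}: since $\mathfrak{B}({\boldsymbol{\lambda}},\mathbf{s})$ is a ${\mathcal{U}}_{q}^{\prime}{(\widehat{\mathfrak{sl}_{e}})}$-highest weight vertex, it is totally periodic, so the peeling procedure of \S\ref{subsec-peeling} terminates at $({\boldsymbol{\lambda}}^{\circ},\mathbf{s}^{\circ})=(\uemptyset,\mathbf{s}^{\circ})$ with $\mathbf{s}^{\circ}\in\mathcal{T}_{l,e}$. Each step of the procedure removes one $e$-period, say of form $M_1,M_2,\ldots$ (with $M_j$ the maximal entry at step $j$), and by Proposition~\ref{weight}(2) the ${\mathcal{U}_{q}(\mathfrak{sl}_{\infty})}$-weight drops by exactly $\omega_{M_j}$ at that step. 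Iterating over all the periods peeled off, and using $\mathrm{wt}(\uemptyset,\mathbf{s}^{\circ})_{\infty}=\Lambda_{\mathbf{s}^{\circ},\infty}$, I get
\[
\nu=\mathrm{wt}({\boldsymbol{\lambda}},\mathbf{s})_{\infty}=\Lambda_{\mathbf{s}^{\circ},\infty}+\sum_{j}\omega_{M_j},
\]
which is precisely $\Lambda_{\mathbf{s}^{\circ},\infty}+\mathrm{wt}(\mathfrak{B}({\boldsymbol{\lambda}},\mathbf{s})_{0})$ by the additivity of the $\infty$-weight over the entries of the symbol together with \eqref{deco} of Lemma~\ref{lem_Util}; concretely the $M_j$ are the maximal entries of the successive $e$-periods, which are exactly the entries of the level $0$-part $\mathfrak{B}({\boldsymbol{\lambda}},\mathbf{s})_{0}$ recorded with the grouping into periods.

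Next I would verify that this expression has the shape required by \eqref{dec(nu)}: $\mathbf{s}^{\circ}\in\mathcal{T}_{l,e}$ is already known, and $\sum_j\omega_{M_j}\in\pi_e^+$ since it is a nonnegative integer combination of the $\omega_k$. The constraint $M_j>t_0(\nu)+e=s_0^{\circ}+e$ should follow from the observation already recorded in Remark~\ref{Rem_HW_infinite}, namely that any $e$-period removed during peeling has maximal entry strictly above the top of the ``stable'' region of the current symbol; since $\mathbf{s}^{\circ}\in\mathcal{T}_{l,e}$ forces $s_{l-1}^{\circ}-s_0^{\circ}\le e-1$, every period genuinely peeled (as opposed to the trivial situation $\uemptyset$ with $\mathbf{s}\in\mathcal{T}_{l,e}$) has its maximal entry exceeding $s_0^{\circ}+e$. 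Once $\nu$ is exhibited in the form \eqref{dec(nu)}, the uniqueness of that decomposition—stated just after \eqref{dec(nu)}—immediately gives $\mathbf{t}(\nu)=\mathbf{s}^{\circ}$ and $\gamma(\nu)=\sum_j\omega_{M_j}=\mathrm{wt}(\mathfrak{B}({\boldsymbol{\lambda}},\mathbf{s})_{0})$, as claimed.

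The main obstacle I anticipate is the bookkeeping in the second step: one must be careful that the $\omega_{M_j}$ appearing from iterated applications of Proposition~\ref{weight}(2) are correctly matched to the entries of the level $0$-part $\mathfrak{B}({\boldsymbol{\lambda}},\mathbf{s})_{0}$, since the peeling of $\mathbf{s}$ down to $\mathbf{s}^{\circ}$ interleaves period-removals that shrink ${\boldsymbol{\lambda}}$ with period-removals from the empty-multipartition tail (as in Lemma~\ref{Lem_s(p)}). The cleanest way around this is to note that the full list of periods peeled is in bijection with the rows-plus-tails structure encoded by Lemma~\ref{lem_Util}'s map $\psi$, so that $\sum_j\omega_{M_j}=\mathrm{wt}(\mathfrak{B}({\boldsymbol{\lambda}},\mathbf{s})_0)$ is exactly \eqref{deco}; everything else is a direct appeal to already-proved statements.
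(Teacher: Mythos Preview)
Your overall architecture is exactly that of the paper: use Theorem~\ref{Th_HW} and iterated Proposition~\ref{weight}(2) (equivalently, formula~\eqref{deco}) to write $\nu=\Lambda_{\mathbf{s}^{\circ},\infty}+\sum_{j}\omega_{M_j}$, check this has the shape~\eqref{dec(nu)}, and conclude by uniqueness. The ``bookkeeping'' worry you raise at the end is harmless: $\sum_j\omega_{M_j}=\mathrm{wt}(\mathfrak{B}({\boldsymbol{\lambda}},\mathbf{s})_0)$ is essentially the definition of the level~$0$ part together with $\omega_k=\varepsilon_k+\cdots+\varepsilon_{k-e+1}$.

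There is, however, a genuine gap in your verification that each $M_j>s_0^{\circ}+e$. Your appeal to Remark~\ref{Rem_HW_infinite} is misplaced: that remark concerns the $e=\infty$ situation and says nothing about the position of finite $e$-periods relative to $\mathbf{s}^{\circ}$. Likewise, the inequality $s_{l-1}^{\circ}-s_0^{\circ}\le e-1$ alone does not force $M_j>s_0^{\circ}+e$; a priori the last period removed could have its maximal entry equal to $s_{l-1}^{\circ}$ or to $s_0^{\circ}+e$. The paper closes this gap by looking specifically at the \emph{smallest} $M_j$, call it $k_0$: the symbol obtained by putting the period $\{k_0-e+1,\ldots,k_0\}$ back into $\mathfrak{B}(\uemptyset,\mathbf{s}^{\circ})$ is of the form $\mathfrak{B}(\uemptyset,\mathbf{u})$ with $\mathbf{u}\in\mathcal{T}_{l,\infty}$ (here one uses that the peeled symbols remain semistandard, cf.\ assertion~1 of the Proposition just after the lemma) but $\mathbf{u}\notin\mathcal{T}_{l,e}$ (otherwise the peeling would already have stopped). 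From $\mathbf{u}\in\mathcal{T}_{l,\infty}$ one gets $k_0-e+1>s_0^{\circ}$, hence $k_0\ge s_0^{\circ}+e$; and $k_0=s_0^{\circ}+e$ is ruled out because in that case $\mathbf{u}=(s_1^{\circ},\ldots,s_{l-1}^{\circ},s_0^{\circ}+e)\in\mathcal{T}_{l,e}$. You should replace your hand-wave by this explicit two-step argument on $k_0$.
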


\begin{proof}
In view to (\ref{deco}), the weight $\nu$ decomposes on the form
\[
\nu=\Lambda_{\mathbf{s}^{\circ},\infty}+\mathrm{wt}(\mathfrak{B}%
({\boldsymbol{\lambda}},\mathbf{s})_{0}%
\]
where by Theorem \ref{Th_HW} and Lemma \ref{lem_Util}, we have $\mathbf{s}%
^{\circ}\mathbf{\in}\mathcal{T}_{l,e}$ and $\mathrm{wt}(\mathfrak{B}%
({\boldsymbol{\lambda}},\mathbf{s})_{0}\in\pi_{e}^{+}$. Set $\mathrm{wt}%
(\mathfrak{B}({\boldsymbol{\lambda}},\mathbf{s})_{0}=\sum_{k\in\mathbb{Z}%
}a_{k}\omega_{k}$.\ The entries of $\mathfrak{B}({\boldsymbol{\lambda}%
},\mathbf{s})_{0}$ are those of the periods of $\mathfrak{B}%
({\boldsymbol{\lambda}},\mathbf{s})$ and $a_{k}$ is the number of periods
$\{k,\ldots,k-e+1\}$ in $\mathfrak{B}({\boldsymbol{\lambda}},\mathbf{s})_{0}$.
Let $k_{0}$ be the minimal integer such that $a_{k_{0}}\neq0$.  By definition
of the peeling procedure, the addition of the letters $\{k_{0}-e+1,\ldots
,k_{0}\}$ in the symbol $\mathfrak{B}(\uemptyset,\mathbf{s}^{\circ})$, yields
a symbol $\mathfrak{B}(\uemptyset,\mathbf{u})$ with $\mathbf{u\in}%
\mathcal{T}_{l,\infty}$ but $\mathbf{u\notin}\mathcal{T}_{l,e}$. Since
$\mathbf{u\in}\mathcal{T}_{l,\infty}$, we must have $k_{0}-e+1>s_{0}^{\circ}$,
that is $k_{0}\geq s_{0}^{\circ}+e$. We cannot have $k_{0}=s_{0}^{\circ}+e$,
otherwise $\mathbf{u}=(s_{1}^{\circ},\ldots,s_{l}^{\circ},s_{0}^{\circ}%
+e)\in\mathcal{T}_{l,e}$. Thus $k_{0}>s_{0}^{\circ}+e$.\ Since the
decomposition (\ref{dec(nu)}) is unique, this imposes that $\mathbf{t}%
(\nu)=\mathbf{s}^{\circ}$ and $\gamma(\nu)=\mathrm{wt}(\mathfrak{B}%
({\boldsymbol{\lambda}},\mathbf{s})_{0})$ as desired.
\end{proof}

\begin{Prop}
Consider a totally periodic symbol $\mathfrak{B}({\boldsymbol{\lambda}%
},\mathbf{s})$ in $\mathcal{G}_{\infty,\mathbf{s}}(\uemptyset)$ of
${\mathcal{U}_{q}(\mathfrak{sl}_{\infty})}$-weight $\nu$.

\begin{enumerate}
\item The successive symbols appearing during the peeling procedure of
$\mathfrak{B}({\boldsymbol{\lambda}},\mathbf{s})$ of $\mathcal{G}%
_{\infty,\mathbf{s}}(\uemptyset)$ remain semistandard.

\item The number of highest weight vertices in $\mathcal{G}_{\infty
,\mathbf{s}}(\uemptyset)$ with ${\mathcal{U}_{q}(\mathfrak{sl}_{\infty})}%
$-highest weight $\nu\in P_{e,\infty}$ is finite equal to $m_{\mathbf{s},\nu
}^{e}=\left\vert \mathrm{Tab}_{\mathbf{s}\setminus\mathbf{t}(\nu),\gamma(\nu
)}\right\vert .$
\end{enumerate}
\end{Prop}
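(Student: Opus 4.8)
The plan is to prove the two assertions in turn; the first is the genuinely combinatorial point, and the second is an assembly of the lemmas already established.

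For assertion (1) it suffices to show that removing a single $e$-period sends a semistandard symbol to a semistandard symbol, since the peeling procedure of \S\ref{subsec-peeling} is just the iteration of this step and $\mathfrak{B}({\boldsymbol{\lambda}},\mathbf{s})$ is semistandard by hypothesis. So I would let $\mathfrak{B}({\boldsymbol{\lambda}},\mathbf{s})$ be semistandard with $e$-period at positions $(i_1,c_1),\dots,(i_e,c_e)$, $c_1\geq\cdots\geq c_e$, reading off the run $k,k-1,\dots,k-e+1$ with $k$ the largest entry. Passing to ${\boldsymbol{\lambda}}^-,\mathbf{s}^-$ deletes exactly these $e$ cells and re-indexes each touched row $c$ (equivalently lowers $s_c$ by the number of period cells it contains). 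Semistandardness of the new symbol is a list of inequalities between two entries occupying the same column in two consecutive rows, and only those involving a row that lost a cell can be affected. To control these I would use, besides the semistandardness of $\mathfrak{B}({\boldsymbol{\lambda}},\mathbf{s})$, the two structural features of the $e$-period: each value $k-t+1$ is taken in the lowest admissible row (condition (3) of Definition \ref{def3}) and, in that row, is the rightmost occurrence of its value (the reading description of \S\ref{word}). A short case distinction — a row carrying several consecutive period cells, versus a pair of adjacent rows only one of which meets the period — then shows that after re-indexing every column inequality still holds. This case analysis is the step I expect to be the main obstacle, being the only place where the exact combinatorics of the $e$-period is used.

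For assertion (2) I would first pin down what is being counted. By Theorem \ref{Th_HW} the ${\mathcal{U}}_{q}^{\prime}{(\widehat{\mathfrak{sl}_{e}})}$-highest weight vertices lying in $\mathcal{G}_{\infty,\mathbf{s}}(\uemptyset)$ are exactly the totally periodic symbols in $\mathcal{G}_{\infty,\mathbf{s}}(\uemptyset)$: indeed a vertex of the subcrystal $\mathcal{G}_{\infty,\mathbf{s}}^{e}(\uemptyset)$ is highest weight precisely when it is annihilated by every $\widetilde{e}_i$, which is the same condition as in $\mathcal{G}_{e,\mathbf{s}}$ because that subcrystal is stable under the Kashiwara operators. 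Now fix $\nu\in P_{e,\infty}$ with its unique decomposition $\nu=\Lambda_{\mathbf{t}(\nu),\infty}+\gamma(\nu)$. By Lemma \ref{Lem_dec_can} a totally periodic symbol of ${\mathcal{U}_{q}(\mathfrak{sl}_{\infty})}$-weight $\nu$ satisfies $\mathbf{s}^{\circ}=\mathbf{t}(\nu)$ and $\mathrm{wt}(\mathfrak{B}({\boldsymbol{\lambda}},\mathbf{s})_0)=\gamma(\nu)$, hence lies in $S_{\mathbf{t}(\nu)}$ with level-$0$ part of weight $\gamma(\nu)$; conversely, by (\ref{deco}), every element of $S_{\mathbf{t}(\nu)}$ whose level-$0$ part has weight $\gamma(\nu)$ has total ${\mathcal{U}_{q}(\mathfrak{sl}_{\infty})}$-weight $\nu$. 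By Lemma \ref{lem_Util} the map $\psi$ carries $S_{\mathbf{t}(\nu)}$ bijectively onto $\mathrm{Tab}_{\mathbf{s}\setminus\mathbf{t}(\nu)}^{e}$ and, through (\ref{deco}), intertwines the two weight gradings, so it restricts to a bijection between the set being counted and $\mathrm{Tab}_{\mathbf{s}\setminus\mathbf{t}(\nu),\gamma(\nu)}^{e}$, which gives $m_{\mathbf{s},\nu}^{e}=|\mathrm{Tab}_{\mathbf{s}\setminus\mathbf{t}(\nu),\gamma(\nu)}^{e}|$. Finiteness is then immediate, since $\mathrm{Tab}_{\mathbf{s}\setminus\mathbf{t}(\nu),\gamma(\nu)}^{e}$ consists of fillings of the fixed finite skew shape $\mathbf{s}\setminus\mathbf{t}(\nu)$ whose content (the multiset of entries) is prescribed by $\gamma(\nu)$, and there are only finitely many of those.
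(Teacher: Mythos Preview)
Your proposal is correct and follows essentially the same route as the paper. For assertion (1) the paper's proof is a single sentence invoking exactly the two facts you isolate (columns increase from top to bottom, and each period entry sits in the lowest row carrying its value), so your planned case analysis is more cautious than strictly necessary; for assertion (2) your argument via Theorem \ref{Th_HW}, Lemma \ref{Lem_dec_can} and the restriction of $\psi$ from Lemma \ref{lem_Util} is precisely what the paper does.
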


\begin{proof}
Assertion 1 follows from the fact that the columns of $\mathfrak{B}%
({\boldsymbol{\lambda}},\mathbf{s})$ increase from top to bottom and each
entry $k$ in a period is the lowest possible occurrence of the integer $k$ in
the symbol considered. Consider $\mathfrak{B}({\boldsymbol{\lambda}%
},\mathbf{s})$ of highest weight $\nu$. By Lemma \ref{Lem_dec_can}, we have
the decomposition $\nu=\mathbf{s}^{\circ}+\mathrm{wt}(\mathfrak{B}%
({\boldsymbol{\lambda}},\mathbf{s})_{0})$. Then the restriction of the
bijection $\psi$ defined in Lemma \ref{lem_Util} to the symbols of weight
$\nu$ yields a one-to-one correspondence between the symbols $\mathfrak{B}%
({\boldsymbol{\lambda}},\mathbf{s})$ of highest weight $\nu$ and the tableaux
$\mathfrak{B}({\boldsymbol{\lambda}},\mathbf{s})_{0}$ of shape $\mathbf{s}%
\setminus\mathbf{s}^{\circ}$ and weight $\gamma(\nu)$. Assertion 2 follows.
\end{proof}

\bigskip

We thus obtain the following theorem.

\begin{Th}
Assume $e$ is finite and consider $\mathbf{s\in}\mathcal{T}_{l,\infty}$.
\label{Dec_Fock}

\begin{enumerate}
\item The crystal $\mathcal{G}_{\infty,\mathbf{s}}(\uemptyset)$ decomposes
into irreducible ${\mathcal{U}}_{q}^{\prime}{(\widehat{\mathfrak{sl}_{e}})}%
$-components whose highest weight vertices are also weight vertices for the
${\mathcal{U}_{q}(\mathfrak{sl}_{\infty})}$-structure.

\item The ${\mathcal{U}_{q}(\mathfrak{sl}_{\infty})}$-weight of such a vertex
belongs to $P_{e,\infty}$

\item The number of highest weight vertices in $\mathcal{G}_{\infty
,\mathbf{s}}(\uemptyset)$ with ${\mathcal{U}_{q}(\mathfrak{sl}_{\infty})}%
$-highest weight $\nu\in P_{e,\infty}$ is finite equal to the cardinality
$m_{\mathbf{s},\nu}^{e}=\left\vert \mathrm{Tab}_{\mathbf{s}\setminus
\mathbf{t}(\nu),\gamma(\nu)}\right\vert .$
\end{enumerate}
\end{Th}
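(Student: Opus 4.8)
The plan is to assemble Theorem \ref{Dec_Fock} from the pieces already in place, treating it essentially as a corollary of the preceding results. The three assertions correspond, respectively, to the combinatorial characterization of highest weight vertices (Theorem \ref{Th_HW}), the weight-lattice analysis (Lemma \ref{Lem_dec_can}), and the counting statement (the Proposition immediately preceding). So the proof is largely a matter of citing and stitching, and the main work is making sure the hypotheses line up.

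First I would prove assertion (1). By Corollary \ref{Cor_branching}, $\mathcal{G}_{\infty,\mathbf{s}}(\uemptyset)$ is a subgraph of $\mathcal{G}_{e,\mathbf{s}}$ whose connected components are exactly the ${\mathcal{U}}_{q}^{\prime}{(\widehat{\mathfrak{sl}_{e}})}$-connected components of $V_{\infty}(\mathbf{s})$, and these give the irreducible decomposition. A vertex $\mathfrak{B}({\boldsymbol{\lambda}},\mathbf{s})$ of $\mathcal{G}_{\infty,\mathbf{s}}(\uemptyset)$ is a ${\mathcal{U}}_{q}^{\prime}{(\widehat{\mathfrak{sl}_{e}})}$-highest weight vertex precisely when $\widetilde{e}_i({\boldsymbol{\lambda}})=0$ for all $i\in\mathbb{Z}/e\mathbb{Z}$, i.e.\ when it is a highest weight vertex of the ambient crystal $\mathcal{G}_{e,\mathbf{s}}$; by Theorem \ref{Th_HW} this happens iff $({\boldsymbol{\lambda}},\mathbf{s})$ is totally periodic. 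To see such a vertex is also a weight vertex for the ${\mathcal{U}_{q}(\mathfrak{sl}_{\infty})}$-structure, I would invoke assertion (1) of the preceding Proposition: the symbols appearing during the peeling procedure stay semistandard, and since ${\boldsymbol{\lambda}}^{\circ}=\uemptyset$, the whole symbol is obtained from $\mathfrak{B}(\uemptyset,\mathbf{s}^{\circ})$ by stacking $e$-periods, which are precisely the boxes contributing the weights $\omega_{M}$ of Proposition \ref{weight}(2); thus $\mathrm{wt}({\boldsymbol{\lambda}},\mathbf{s})_{\infty}$ is determined and indeed such a vertex sits in a single ${\mathcal{U}_{q}(\mathfrak{sl}_{\infty})}$-weight space. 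Assertion (2) is then exactly Lemma \ref{Lem_dec_can}: the ${\mathcal{U}_{q}(\mathfrak{sl}_{\infty})}$-weight $\nu$ of a totally periodic symbol decomposes as $\nu = \Lambda_{\mathbf{s}^{\circ},\infty}+\mathrm{wt}(\mathfrak{B}({\boldsymbol{\lambda}},\mathbf{s})_{0})$ with $\mathbf{s}^{\circ}\in\mathcal{T}_{l,e}$ and $\mathrm{wt}(\mathfrak{B}({\boldsymbol{\lambda}},\mathbf{s})_{0})\in\pi_e^{+}$, so $\nu\in P_{e,\infty}$ with $\mathbf{t}(\nu)=\mathbf{s}^{\circ}$, $\gamma(\nu)=\mathrm{wt}(\mathfrak{B}({\boldsymbol{\lambda}},\mathbf{s})_{0})$.

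For assertion (3) I would invoke the counting Proposition directly: fixing $\nu\in P_{e,\infty}$ with its unique decomposition \eqref{dec(nu)}, Lemma \ref{lem_Util} gives the bijection $\psi:\mathfrak{B}({\boldsymbol{\lambda}},\mathbf{s})\mapsto\mathfrak{B}({\boldsymbol{\lambda}},\mathbf{s})_0$ from the totally periodic symbols in $\mathcal{G}_{\infty,\mathbf{s}}(\uemptyset)$ with $\mathbf{s}^{\circ}=\mathbf{t}(\nu)$ onto $\mathrm{Tab}_{\mathbf{s}\setminus\mathbf{t}(\nu)}^{e}$, and by Lemma \ref{Lem_dec_can} this bijection matches the weight condition, restricting to a bijection onto $\mathrm{Tab}_{\mathbf{s}\setminus\mathbf{t}(\nu),\gamma(\nu)}^{e}$. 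Hence the number of ${\mathcal{U}}_{q}^{\prime}{(\widehat{\mathfrak{sl}_{e}})}$-highest weight vertices in $\mathcal{G}_{\infty,\mathbf{s}}(\uemptyset)$ of ${\mathcal{U}_{q}(\mathfrak{sl}_{\infty})}$-weight $\nu$ equals $|\mathrm{Tab}_{\mathbf{s}\setminus\mathbf{t}(\nu),\gamma(\nu)}^{e}| = m_{\mathbf{s},\nu}^{e}$, and finiteness follows since $\mathbf{s}\setminus\mathbf{t}(\nu)$ is a fixed finite skew shape and the weight $\gamma(\nu)$ is fixed.

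The only genuinely delicate point — the step I would flag as the main obstacle — is checking that the peeling procedure is internally consistent inside $\mathcal{G}_{\infty,\mathbf{s}}(\uemptyset)$: namely that when we restrict from the ambient crystal $\mathcal{G}_{e,\mathbf{s}}$ to the connected component $\mathcal{G}_{\infty,\mathbf{s}}(\uemptyset)$, the notion of ``${\mathcal{U}}_{q}^{\prime}{(\widehat{\mathfrak{sl}_{e}})}$-highest weight vertex'' computed inside the component agrees with the one computed in the ambient crystal. This is where the subcrystal statement of Corollary \ref{Cor_branching} (and the underlying fact from \cite[Thm.\ 4.2.2]{JL} that $\mathcal{G}_{e,\mathbf{s}}$ is a subgraph of $\mathcal{G}_{\infty,\mathbf{s}}$) is essential: because the Kashiwara operators $\widetilde{e}_i$ on $\mathcal{G}_{\infty,\mathbf{s}}^e(\uemptyset)$ are obtained by restriction, a vertex has $\widetilde{e}_i=0$ within the component iff it has $\widetilde{e}_i=0$ in $\mathcal{G}_{e,\mathbf{s}}$, so Theorem \ref{Th_HW} applies unchanged. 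Once this identification is made explicit, everything else is bookkeeping with the already-established bijections.
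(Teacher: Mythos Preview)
Your proposal is correct and follows essentially the same route as the paper, which treats the theorem as an immediate consequence of the preceding results (the paper simply writes ``We thus obtain the following theorem'' with no separate proof). One minor remark: your justification in assertion (1) that a highest weight vertex is also a ${\mathcal{U}_{q}(\mathfrak{sl}_{\infty})}$-weight vertex is more elaborate than needed, since by construction (\S\ref{act2}) every $l$-partition is already a weight vector for both module structures; likewise your ``delicate point'' about restriction of Kashiwara operators is already handled in \S4.1, so no extra argument is required.
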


By combining with Theorem \ref{Th_dec_Fock_inf}, this yields the decomposition
of the Fock space in its irreducible ${\mathcal{U}}_{q}^{\prime}%
{(\widehat{\mathfrak{sl}_{e}})}$-components.

\begin{Th}
Assume $e$ is finite and consider $\mathbf{s\in}\mathcal{T}_{l,\infty}$.

\begin{enumerate}
\item The crystal $\mathcal{G}_{\mathbf{s},e}$ decomposes into irreducible
${\mathcal{U}}_{q}^{\prime}{(\widehat{\mathfrak{sl}_{e}})}$-components whose
highest weight vertices are also weight vertices for the ${\mathcal{U}%
_{q}(\mathfrak{sl}_{\infty})}$-structure $\mathcal{G}_{\mathbf{s},\infty}$.

\item The ${\mathcal{U}_{q}(\mathfrak{sl}_{\infty})}$-weight of such a vertex
belongs to $P_{e,\infty}$

\item The number of ${\mathcal{U}}_{q}^{\prime}{(\widehat{\mathfrak{sl}_{e}}%
)}$-highest weight vertices in $\mathcal{G}_{e}$ with ${\mathcal{U}%
_{q}(\mathfrak{sl}_{\infty})}$-highest weight $\nu\in P_{e,\infty}$ is finite
equal to $M_{\mathbf{s},\nu}^{e}=\sum_{\mathbf{v\in}\mathcal{T}_{l,\infty}%
}K_{\lambda(\mathbf{v}),\mu(\mathbf{v})}m_{\mathbf{v},\nu}^{e}$.
\end{enumerate}
\end{Th}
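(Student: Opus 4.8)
The plan is to combine the two decomposition results already proved. By Theorem~\ref{Th_dec_Fock_inf}, as a ${\mathcal{U}_{q}(\mathfrak{sl}_{\infty})}$-module the Fock space decomposes as $\mathcal{F}_{\mathbf{s},\infty}=\bigoplus_{\mathbf{v}\in\mathcal{T}_{l,\infty}}V_{\infty}(\mathbf{v})^{\oplus K_{\lambda(\mathbf{v}),\mu(\mathbf{v})}}$. By Proposition~\ref{prop_compati}(1), each ${\mathcal{U}_{q}(\mathfrak{sl}_{\infty})}$-irreducible summand $V_{\infty}(\mathbf{v})$ is stable under the ${\mathcal{U}}_{q}^{\prime}{(\widehat{\mathfrak{sl}_{e}})}$-action, and its Kashiwara operators are obtained by restriction; by Corollary~\ref{Cor_branching} its decomposition into ${\mathcal{U}}_{q}^{\prime}{(\widehat{\mathfrak{sl}_{e}})}$-components is realised at crystal level as the decomposition of $\mathcal{G}_{\infty,\mathbf{v}}(\uemptyset)$ into ${\mathcal{U}}_{q}^{\prime}{(\widehat{\mathfrak{sl}_{e}})}$-connected components. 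Hence, collecting summands, $\mathcal{G}_{\mathbf{s},e}$ decomposes into ${\mathcal{U}}_{q}^{\prime}{(\widehat{\mathfrak{sl}_{e}})}$-components, proving assertion~1; and each highest weight vertex, coming from some $\mathcal{G}_{\infty,\mathbf{v}}(\uemptyset)$, is by Theorem~\ref{Dec_Fock}(1) a weight vertex for the $\mathcal{G}_{\mathbf{s},\infty}$-structure. Assertion~2 is then immediate from Theorem~\ref{Dec_Fock}(2).

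For assertion~3 the key point is bookkeeping of the ${\mathcal{U}_{q}(\mathfrak{sl}_{\infty})}$-weights. First I would note that $\mathcal{G}_{\mathbf{s},e}$ and $\mathcal{G}_{\mathbf{s},\infty}$ share the same underlying crystal $\mathcal{G}$ (Theorem~\ref{Th_BcF}) and the same ${\mathcal{U}_{q}(\mathfrak{sl}_{\infty})}$-weight function. A ${\mathcal{U}}_{q}^{\prime}{(\widehat{\mathfrak{sl}_{e}})}$-highest weight vertex of $\mathcal{G}_{\mathbf{s},e}$ with ${\mathcal{U}_{q}(\mathfrak{sl}_{\infty})}$-weight $\nu$ lies in exactly one ${\mathcal{U}_{q}(\mathfrak{sl}_{\infty})}$-connected component of $\mathcal{G}_{\mathbf{s},\infty}$, namely one isomorphic to $\mathcal{G}_{\infty,\mathbf{v}}(\uemptyset)$ for a unique $\mathbf{v}\in\mathcal{T}_{l,\infty}$ (using the crystal isomorphism $f^{e,{\boldsymbol{\lambda}}}_{\mathbf{s},\mathbf{v}}$ of \S\ref{act3}, or rather its $\mathfrak{sl}_\infty$-analogue). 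Since the ${\mathcal{U}}_{q}^{\prime}{(\widehat{\mathfrak{sl}_{e}})}$-structure is a subcrystal of the ${\mathcal{U}_{q}(\mathfrak{sl}_{\infty})}$-structure and the ${\mathcal{U}_{q}(\mathfrak{sl}_{\infty})}$-components are stable under $\tilde e_i,\tilde f_i$ (Proposition~\ref{prop_compati}), being ${\mathcal{U}}_{q}^{\prime}{(\widehat{\mathfrak{sl}_{e}})}$-highest inside $\mathcal{G}_{\mathbf{s},e}$ is the same as being ${\mathcal{U}}_{q}^{\prime}{(\widehat{\mathfrak{sl}_{e}})}$-highest inside that single component. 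Moreover the ${\mathcal{U}_{q}(\mathfrak{sl}_{\infty})}$-weight is translated by the isomorphism with $\mathcal{G}_{\infty,\mathbf{v}}(\uemptyset)$ in a controlled way: the isomorphism shifts the $\mathfrak{sl}_\infty$-weight so that the image of $\uemptyset$ has weight $\Lambda_{\mathbf{v},\infty}$, so $\nu$ corresponds to a weight $\nu'$ with $\nu-\Lambda_{\mathbf{v},\infty}=\nu'-\Lambda_{\mathbf{v},\infty}$ inside that component — more precisely, since $\mathcal{G}_{\infty,\mathbf{s}}$ has highest weight components whose shift is by an element of the root lattice, the weight $\nu\in P_{e,\infty}$ is unchanged. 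I would simply observe that the ${\mathcal{U}_{q}(\mathfrak{sl}_{\infty})}$-weight of a vertex does not depend on which crystal structure we record it in, and that inside a fixed copy of $\mathcal{G}_{\infty,\mathbf{v}}(\uemptyset)$, Theorem~\ref{Dec_Fock}(3) counts the ${\mathcal{U}}_{q}^{\prime}{(\widehat{\mathfrak{sl}_{e}})}$-highest weight vertices of ${\mathcal{U}_{q}(\mathfrak{sl}_{\infty})}$-weight $\nu$ as $m_{\mathbf{v},\nu}^{e}=|\mathrm{Tab}_{\mathbf{v}\setminus\mathbf{t}(\nu),\gamma(\nu)}|$.

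Then summing over the ${\mathcal{U}_{q}(\mathfrak{sl}_{\infty})}$-components of $\mathcal{G}_{\mathbf{s},\infty}$ — of which there are $K_{\lambda(\mathbf{v}),\mu(\mathbf{v})}$ isomorphic to $\mathcal{G}_{\infty,\mathbf{v}}(\uemptyset)$ for each $\mathbf{v}\in\mathcal{T}_{l,\infty}$, by Theorem~\ref{Th_dec_Fock_inf} — gives the finite total
\[
M_{\mathbf{s},\nu}^{e}=\sum_{\mathbf{v}\in\mathcal{T}_{l,\infty}}K_{\lambda(\mathbf{v}),\mu(\mathbf{v})}\,m_{\mathbf{v},\nu}^{e},
\]
since only finitely many $\mathbf{v}$ contribute: $\mathrm{Tab}_{\mathbf{v}\setminus\mathbf{t}(\nu),\gamma(\nu)}$ is empty unless $\mathbf{t}(\nu)\le\mathbf{v}$ componentwise and $|\mathbf{v}\setminus\mathbf{t}(\nu)|$ equals the (fixed) number of boxes determined by $\gamma(\nu)$, while $K_{\lambda(\mathbf{v}),\mu(\mathbf{v})}\ne 0$ forces $\mathbf{v}$ to lie in a bounded region. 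This establishes assertion~3, and with assertions~1 and~2 completes the proof. The one step needing care — the main obstacle — is the weight bookkeeping: checking that the ${\mathcal{U}_{q}(\mathfrak{sl}_{\infty})}$-weight $\nu$ of a ${\mathcal{U}}_{q}^{\prime}{(\widehat{\mathfrak{sl}_{e}})}$-highest weight vertex is preserved under the identification of its ${\mathcal{U}_{q}(\mathfrak{sl}_{\infty})}$-connected component with a standard $\mathcal{G}_{\infty,\mathbf{v}}(\uemptyset)$, so that the count $m_{\mathbf{v},\nu}^{e}$ from Theorem~\ref{Dec_Fock} applies verbatim; this is a routine consequence of the fact that the crystal isomorphisms involved commute with the weight maps up to the fixed shift $\Lambda_{\mathbf{v},\infty}-\Lambda_{\uemptyset}$, but it should be stated explicitly.
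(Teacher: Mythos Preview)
Your proposal is correct and follows exactly the paper's approach: the paper's own proof is the single sentence ``By combining with Theorem~\ref{Th_dec_Fock_inf}, this yields the decomposition of the Fock space in its irreducible ${\mathcal{U}}_{q}^{\prime}{(\widehat{\mathfrak{sl}_{e}})}$-components,'' and you have simply spelled out that combination. One minor point: your discussion of ``weight shifts'' is unnecessarily convoluted, since the ${\mathcal{U}_{q}(\mathfrak{sl}_{\infty})}$-crystal isomorphism between a component of $\mathcal{G}_{\mathbf{s},\infty}$ and $\mathcal{G}_{\infty,\mathbf{v}}(\uemptyset)$ preserves weights \emph{exactly} (both highest weight vertices have weight $\Lambda_{\mathbf{v},\infty}$, so there is no shift), and it is automatically a ${\mathcal{U}}_{q}^{\prime}{(\widehat{\mathfrak{sl}_{e}})}$-crystal isomorphism because the ${\mathcal{U}}_{q}^{\prime}{(\widehat{\mathfrak{sl}_{e}})}$-module structure on $V_{\infty}(\mathbf{v})$ is intrinsically determined by its ${\mathcal{U}_{q}(\mathfrak{sl}_{\infty})}$-structure via Proposition~\ref{prop_compati}.
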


\end{document}